\newtheorem{observation}{Remark}[section]
\newtheorem{lemma}[observation]{Lemma}  
\newtheorem{theorem}[observation]{Theorem}
\newtheorem{definition}[observation]{Definition}
\newtheorem{example}[observation]{Example}
\newtheorem{proposition}[observation]{Proposition} 
\newtheorem{corollary}[observation]{Corollary}
\newdimen\w@dth
\def\setw@dth#1#2{\setbox\z@\hbox{\scriptsize $#1$}\w@dth=\wd\z@
\setbox\@ne\hbox{\scriptsize $#2$}\ifnum\w@dth<\wd\@ne \w@dth=\wd\@ne \fi
\advance\w@dth by 1.2em}
\def\t@^#1_#2{\allowbreak\def\n@one{#1}\def\n@two{#2}\mathrel
{\setw@dth{#1}{#2}
\mathop{\hbox to \w@dth{\rightarrowfill}}\limits
\ifx\n@one\empty\else ^{\box\z@}\fi
\ifx\n@two\empty\else _{\box\@ne}\fi}}
\def\t@@^#1{\@ifnextchar_ {\t@^{#1}}{\t@^{#1}_{}}}
\def\t@left^#1_#2{\def\n@one{#1}\def\n@two{#2}\mathrel{\setw@dth{#1}{#2}
\mathop{\hbox to \w@dth{\leftarrowfill}}\limits
\ifx\n@one\empty\else ^{\box\z@}\fi
\ifx\n@two\empty\else _{\box\@ne}\fi}}
\def\t@@left^#1{\@ifnextchar_ {\t@left^{#1}}{\t@left^{#1}_{}}}
\def\two@^#1_#2{\def\n@one{#1}\def\n@two{#2}\mathrel{\setw@dth{#1}{#2}
\mathop{\vcenter{\hbox to \w@dth{\rightarrowfill}\kern-1.7ex
                 \hbox to \w@dth{\rightarrowfill}}%
       }\limits
\ifx\n@one\empty\else ^{\box\z@}\fi
\ifx\n@two\empty\else _{\box\@ne}\fi}}
\def\tw@@^#1{\@ifnextchar_ {\two@^{#1}}{\two@^{#1}_{}}}
\def\tofr@^#1_#2{\def\n@one{#1}\def\n@two{#2}\mathrel{\setw@dth{#1}{#2}
\mathop{\vcenter{\hbox to \w@dth{\rightarrowfill}\kern-1.7ex
                 \hbox to \w@dth{\leftarrowfill}}%
       }\limits
\ifx\n@one\empty\else ^{\box\z@}\fi
\ifx\n@two\empty\else _{\box\@ne}\fi}}
\def\t@fr@^#1{\@ifnextchar_ {\tofr@^{#1}}{\tofr@^{#1}_{}}}
\newdimen\W@dth
\def\setW@dth#1#2{\setbox\z@\hbox{$#1$}\W@dth=\wd\z@
\setbox\@ne\hbox{$#2$}\ifnum\W@dth<\wd\@ne \W@dth=\wd\@ne \fi
\advance\W@dth by 1.2em}
\def\T@^#1_#2{\allowbreak\def\N@one{#1}\def\N@two{#2}\mathrel
{\setW@dth{#1}{#2}
\mathop{\hbox to \W@dth{\rightarrowfill}}\limits
\ifx\N@one\empty\else ^{\box\z@}\fi
\ifx\N@two\empty\else _{\box\@ne}\fi}}
\def\T@@^#1{\@ifnextchar_ {\T@^{#1}}{\T@^{#1}_{}}}
\def\T@left^#1_#2{\def\N@one{#1}\def\N@two{#2}\mathrel{\setW@dth{#1}{#2}
\mathop{\hbox to \W@dth{\leftarrowfill}}\limits
\ifx\N@one\empty\else ^{\box\z@}\fi
\ifx\N@two\empty\else _{\box\@ne}\fi}}
\def\T@@left^#1{\@ifnextchar_ {\T@left^{#1}}{\T@left^{#1}_{}}}
\def\Tofr@^#1_#2{\def\N@one{#1}\def\N@two{#2}\mathrel{\setW@dth{#1}{#2}
\mathop{\vcenter{\hbox to \W@dth{\rightarrowfill}\kern-1.7ex
                 \hbox to \W@dth{\leftarrowfill}}%
       }\limits
\ifx\N@one\empty\else ^{\box\z@}\fi
\ifx\N@two\empty\else _{\box\@ne}\fi}}
\def\T@fr@^#1{\@ifnextchar_ {\Tofr@^{#1}}{\Tofr@^{#1}_{}}}
\def\Two@^#1_#2{\def\N@one{#1}\def\N@two{#2}\mathrel{\setW@dth{#1}{#2}
\mathop{\vcenter{\hbox to \W@dth{\rightarrowfill}\kern-1.7ex
                 \hbox to \W@dth{\rightarrowfill}}%
       }\limits
\ifx\N@one\empty\else ^{\box\z@}\fi
\ifx\N@two\empty\else _{\box\@ne}\fi}}
\def\Tw@@^#1{\@ifnextchar_ {\Two@^{#1}}{\Two@^{#1}_{}}}
\def\to{\@ifnextchar^ {\t@@}{\t@@^{}}}
\def\from{\@ifnextchar^ {\t@@left}{\t@@left^{}}}
\def\tofro{\@ifnextchar^ {\t@fr@}{\t@fr@^{}}}
\def\To{\@ifnextchar^ {\T@@}{\T@@^{}}}
\def\From{\@ifnextchar^ {\T@@left}{\T@@left^{}}}
\def\Two{\@ifnextchar^ {\Tw@@}{\Tw@@^{}}}
\def\Tofro{\@ifnextchar^ {\T@fr@}{\T@fr@^{}}}
\title{Exponential Functions in Cartesian Differential Categories}
\author{Jean-Simon Pacaud Lemay}
\begin{document}
\allowdisplaybreaks

\maketitle

\begin{abstract} In this paper, we introduce differential exponential maps in Cartesian differential categories, which generalizes the exponential function $e^x$ from classical differential calculus. A differential exponential map is an endomorphism which is compatible with the differential combinator in such a way that generalizations of $e^0 = 1$, $e^{x+y} = e^x e^y$, and $\frac{\partial e^x}{\partial x} = e^x$ all hold. Every differential exponential map induces a commutative rig, which we call a differential exponential rig, and conversely, every differential exponential rig induces a differential exponential map. In particular, differential exponential maps can be defined without the need of limits, converging power series, or unique solutions of certain differential equations -- which most Cartesian differential categories do not necessarily have. That said, we do explain how every differential exponential map does provide solutions to certain differential equations, and conversely how in the presence of unique solutions, one can derivative a differential exponential map. Examples of differential exponential maps in the Cartesian differential category of real smooth functions include the exponential function, the complex exponential function, the split complex exponential function, and the dual numbers exponential function. As another source of interesting examples, we also study differential exponential maps in the coKleisli category of a differential category. 
\end{abstract}

\subparagraph*{Acknowledgements.} The author would first like to thank Geoff Cruttwell and Robin Cockett for their support in this project, their editorial comments, and very useful discussions which greatly helped the development of this paper. The author would also like to thank the anonymous referee for their review and comments which helped improve this paper. The author would also like to thank the following for financial support regarding this paper: Kellogg College, the Department of Computer Science of the University of Oxford, the Clarendon Fund, the Oxford-Google DeepMind Graduate Scholarship, and the Oxford Travel Abroad Bursary. 
\newpage
\tableofcontents

\section{Introduction}

Cartesian differential categories \cite{blute2009cartesian}, introduced by Blute, Cockett, and Seely, come equipped with a differential combinator $\mathsf{D}$ which provides a categorical axiomatization of the differential from multivariable differential calculus. Important examples of Cartesian differential categories include the category of real smooth functions (Example \ref{smoothex}), the coKleisli category of a differential category \cite{blute2006differential}, the differential objects of a tangent category \cite{cockett2014differential}, and categorical models of Ehrhard and Regnier's differential $\lambda$-calculus \cite{ehrhard2003differential} (which are in fact called Cartesian \emph{closed} differential categories \cite{manzonetto2012categorical}). Other interesting (and surprising) examples include abelian functor calculus \cite{bauer2018directional} and cofree Cartesian differential categories \cite{cockett2011faa,lemay2018tangent}. Since their introduction, Cartesian differential categories have a rich literature and have been successful in generalizing many concepts from classical differential calculus. More recently, Cartesian differential categories have also started to find their way in applications. 

In particular, Cockett and Cruttwell have introduced the notion of dynamical systems and their solutions in tangent categories \cite{cockett2017connections}, which generalize ordinary differential equations in this context, specifically initial value problems. Since every Cartesian differential category is a tangent category, this implies that dynamical systems allow one to study differential equations in a Cartesian differential category. In classical differential calculus, one of the most important tools used for solving differential equations is the exponential function $e^x$. Therefore, it is desirable to generalize the exponential function for Cartesian differential categories. 

The exponential function $e^x$ admits numerous equivalent characterization. It can either be defined as the inverse of the natural logarithm function $ln(x)$, or as the limit:
\[ e^x = \lim \limits_{n \to \infty} (1 + \frac{x}{n})^n\]
or as the convergent power series:
\[e^x = \sum \limits^{\infty}_{n =0} \frac{x^n}{n!}\]
or even as the solution to $f^\prime(x) = f(x)$ with initial condition $f(0) = 1$. However in arbitrary Cartesian differential categories, functions need to be defined at zero (which excludes $ln(x)$) and one does not necessarily have a notion of convergence, infinite sums, or even (unique) solutions to initial value problems. Therefore one must look for a more algebraic characterization of the exponential function. In classical algebra, an exponential ring \cite{van1984exponential} is a ring equipped with an endomorphism $e$ which is a monoid morphism from the additive structure to the multiplicative structure, that is: 
\begin{align*}
e(x+y) = e(x)e(y) && e(0) = 1
\end{align*}
The canonical example of an exponential ring is the field of real numbers $\mathbb{R}$ with the exponential function $e^x$. While this seems promising, arbitrary objects in a Cartesian differential category do not necessarily come equipped with a multiplication. Rather than requiring this extra ring structure on objects, it turns out that the differential combinator $\mathsf{D}$ will allow us to bypass the need for a multiplication. 

In the category of real smooth functions, which is the canonical example of a Cartesian differential category, the differential combinator $\mathsf{D}$ applied to the exponential function $e^x$ is the smooth function $\mathbb{R} \times \mathbb{R} \xrightarrow{\mathsf{D}[e^x]} \mathbb{R}$ defined as: 
\[\mathsf{D}[e^x](x,y) = e^x y\]
Thus the multiplication of $\mathbb{R}$ appears in $\mathsf{D}[e^x]$. Inspired by this observation, the generalization of the exponential function in a Cartesian differential category can be defined simply in terms of an endomorphism $A \xrightarrow{e} A$ which is compatible with the differential combinator $\mathsf{D}$ in the sense that 
\begin{align*}
\mathsf{D}[e](0,x) = x && e(x+y)=\mathsf{D}[e](x, e(y))
\end{align*}
We call such endomorphisms \textbf{differential exponential maps}, which is the main novel notion of study in this paper. Differential exponential maps generalize the exponential function for Cartesian differential categories. Indeed for $e^x$, the differential exponential maps axioms correspond precisely to $e^0 x = x$ and $e^x e^y =e^{x+y}$ respectively. 

Previously, we mentioned that not every object in a Cartesian differential category has a multiplication. However, it turns out that every differential exponential map $A \xrightarrow{e} A$ does induce a commutative rig structure on $A$, and thus $A$ does come equipped with a multiplication. The construction is once again inspired by the classical exponential function $e^x$. Applying the differential combinator on $e^x$ twice we obtain:
\[\mathsf{D}^2[e^x]((x,y), (z,w))=e^{x}yz + e^xw\]
Setting $x=0$ and $w=0$, one re-obtains precisely the multiplication of $\mathbb{R}$:
\[\mathsf{D}^2[e^x]((0,y), (z,0))=yz\]
The unit for the multiplication is, of course, obtain by evaluating $e^x$ at $0$, $e^0 = 1$. This construction is easily generalized to an arbitrary Cartesian differential category and one can show that every differential exponential map induces a commutative rig. Commutativity of the multiplication follows from the symmetry of the partial derivatives axiom \textbf{[CD.7]} of the differential combinator. The unit identities for the multiplication will follow from both differential exponential map axioms. Proving associativity is a bit trickier but essentially follows from the observation that: 
\[ D^3[e^x]((0,x),(y,0), (z,0),(0,0)) = xyz \]
and then using both the differential combinator axioms and differential exponential map axioms, one shows that both sides of the associativity law are equal to the third-order partial derivative. 

Conversely, it is possible to obtain differential exponential maps from special kinds of commutative rigs. Indeed, one can alternatively axiomatize an object equipped with a differential exponential map instead as a commutative rig equipped with an endomorphism which satisfies the three fundamental properties of the exponential function that $e^0 = 1$, $e^{x+y} = e^x e^y$, and $\frac{\partial e^x}{\partial x} = e^x$. We call such rigs: \textbf{differential exponential rigs}. One of the main results of this paper it that there is a bijective correspondence between differential exponential maps and differential exponential rigs. In the category of real smooth functions, interesting examples of differential exponential rigs include $\mathbb{R}$ with the exponential function $e^x$, $\mathbb{R}^2$ equipped with the complex numbers multiplication and the complex exponential function $e^{x+iy} = e^x\cos(y) + i e^x\sin(y)$, $\mathbb{R}^2$ equipped with the split complex numbers multiplication and the split complex exponential function $e^{x+jy} = e^x\cosh(y) + j e^x\sinh(y)$ \cite{olariu2002complex}, and also $\mathbb{R}^2$ equipped with the dual numbers multiplication and the dual numbers exponential function $e^{x+y\varepsilon} = e^x + e^x y \varepsilon$ \cite{rosenfeld2013geometry}. 

As one of the main motivations for their development, differential exponentials maps does allow one to solve a certain class of linear dynamical systems in any Cartesian differential category. Specifically, one can solve the dynamical systems which generalize the initial value problems of the form $f^\prime(x) = f(x)a$ with initial condition $f(0) = b$ (for some constants $a$ and $b$), whose classical solution is $f(x) = e^{ax} b$. The types of differential equations are of particular interest in control systems theory \cite[Chapter 5]{astrom2010feedback}. Furthermore, it turns out that a differential exponential map is indeed the a solution to the dynamical system which generalizes the initial value problem $f^\prime(x) = f(x)$ with initial condition $f(0) = 1$. In future work on solving differential equations in a Cartesian differential category, differential exponential maps will hopefully play a key role. Such an application can be found in \cite[Section 5]{cockett2019differential}, where it is shown that in a tangent category, a differential curve object admits a canonical differential exponential map, which induces solutions to many interesting dynamical systems including one which in turn induces an action on differential bundles. 

An important example of a Cartesian differential category is the coKleisli category of the comonad $\oc$ of a differential category (with finite products). In this setting, a differential exponential map is an endomorphism in the coKleisli category and therefore a map of type $\oc A \xrightarrow{e} A$ in the base category. In a differential storage category, that is, when $\oc$ has the Seely isomorphisms, the differential structure is captured by the codereliction map $A \xrightarrow{\eta} \oc A$ and it turns out that a commutative rig in the coKleisli category is precisely a commutative monoid (over the tensor product) in the base category. As such, a differential exponential map in the coKleisli category can alternatively be given by a commutative monoid $A$ in the base category equipped with a monoid morphism $\oc A \xrightarrow{e} A$ which is a retract of the codereliction. We call such commutative monoids \textbf{$\oc$-differential exponential algebras}, and there is a bijective correspondence between $\oc$-differential exponential algebras and differential exponential maps in the coKleisli category. Interesting examples of differential storage categories include both the category of sets and relations and the category of vector spaces (over a field of characteristic $0$), where the comonad $\oc$ is given by the free exponential modality. In both of these examples, $\oc$-differential exponential algebras correspond precisely to commutative monoids. 

\textbf{Outline and Main Results:} Section \ref{CDCsec} is a background section on Cartesian differential categories where we briefly review the basic definitions, as well as to introduce the notation and terminology used in this paper. In particular, we review the canonical commutative monoid structure $\oplus$ on every object in a Cartesian differential category (Lemma \ref{opluslem}), which plays a key role throughout this paper. In Section \ref{DEMsec} we introduce differential exponential maps (Definition \ref{DEMdef}) and in particular show that the category of differential exponential maps is a Cartesian tangent category (Proposition \ref{DEMtangent}). In Example \ref{expex} we provide examples of differential exponential maps in the category of smooth real functions, which include the classical exponential function, the complex exponential function, the split complex exponential function, and the dual number exponential function. In Section \ref{dessec} we introduce differential exponential rigs. We show that every differential exponential rig induces a differential exponential map (Proposition \ref{prop1}) and conversely that every differential exponential map induces a differential exponential rig (Proposition \ref{prop2}), and show that these constructions are inverses of each other (Theorem \ref{isothm}). As an immediate consequence, the category of differential exponential rigs is also a Cartesian tangent category (Proposition \ref{DEStan}). In Section \ref{DYNsec} we explain the relationship between differential exponential maps and solutions to differential equations in arbitrary Cartesian differential categories. In particular, we show that every differential exponential map is a solution to the expected differential equation (Proposition \ref{esolprop}) and that a certain class of dynamical systems admit a solution (Proposition \ref{comlinprop}). We also show that in the presence of unique solutions to differential equations, one can build a differential exponential map (Proposition \ref{uniqueprop}). In Section \ref{diffexpsec} we study differential exponential maps in the coKleisli category of a differential (storage) category and give equivalent characterizations in these cases (Proposition \ref{coKexp1} and Proposition \ref{coKexp2}). We also introduce $\oc$-differential exponential algebras (Definition \ref{!deadef}) for differential storage categories. We show that every $\oc$-differential exponential algebra induces a differential exponential map in the coKleisli category (Proposition \ref{propcok1}), and conversely that every differential exponential map in the coKleisli category induces a $\oc$-differential exponential algebra (Proposition \ref{propcok1}), and that these constructions are inverses of each other (Theorem \ref{isothm2}). We conclude this paper with Section \ref{consec} which discusses some interesting potential future work to do with differential exponential maps. 

\textbf{Conventions:} We use diagrammatic order notation for composition: this means that the composite map $fg$ is the map which first does $f$ then $g$. 

\section{Background: Cartesian Differential Categories}\label{CDCsec}

In this section, if only to introduce notation, we briefly review Cartesian differential categories, their underlying Cartesian left additive structure, and their induced Cartesian tangent category structure. That said, we assume that the reader is familiar with the theory of Cartesian differential categories. For a more in-depth discussion we refer the reader to \cite{blute2009cartesian,cockett2014differential}. 

For a category with finite products, we denote the binary product of objects $A$ and $B$ by $A \times B$ with projection maps $A \times B \xrightarrow{\pi_0} A$ and $A \times B \xrightarrow{\pi_1} B$, pairing operation $\langle -, - \rangle$ and thus $f \times g = \langle \pi_0 f, \pi_1 g \rangle$, and wed denote the chosen terminal object as $\top$. Also, an important map which will use throughout this paper is the canonical interchange map $(A \times B) \times (C \times D) \xrightarrow{c} (A \times C) \times (B \times D)$ defined as follows: 
  \begin{equation}\label{cdef}\begin{gathered} c := \langle \pi_0 \times \pi_0, \pi_1 \times \pi_1 \rangle
  \end{gathered}\end{equation}
which is a natural isomorphism such that $cc=1$. 

\begin{definition}\label{CLACdef} A \textbf{left additive category} \cite[Definition 1.1.1]{blute2009cartesian} is a category $\mathbb{X}$ such that each hom-set $\mathbb{X}(A,B)$ is a commutative monoid with addition $\mathbb{X}(A,B) \times \mathbb{X}(A,B) \xrightarrow{+} \mathbb{X}(A,B)$, $(f,g) \mapsto f +g$, and zero $0 \in \mathbb{X}(A,B)$, such that composition on the {\em left} preserves the additive structure, that is:
  \begin{equation}\label{add}\begin{gathered} f(g+h)=fg+fh \quad \quad \quad f0=0
  \end{gathered}\end{equation}
A map $k$ in a left additive category is \textbf{additive} \cite[Definition 1.1.1]{blute2009cartesian} if composition on the right by $h$ preserves the additive structure, that is:
  \begin{equation}\label{addmap}\begin{gathered} (g+h)k =gk+hk \quad \quad \quad 0k=0
  \end{gathered}\end{equation}
A \textbf{Cartesian left additive category} \cite[Definition 1.2.1]{blute2009cartesian} is a left additive category $\mathbb{X}$ with finite products $\times$ and terminal object $\top$ such that all projection maps $A \times B \xrightarrow{\pi_0} A$ and $A \times B \xrightarrow{\pi_1} B$ are additive. 
\end{definition}

We note that the definition of a Cartesian left additive category presented here is not precisely that given in \cite[Definition 1.2.1]{blute2009cartesian}, but was shown to be equivalent in \cite[Lemma 2.4]{lemay2018tangent}. Also note that in a Cartesian left additive category, the unique map from an object $A$ to the terminal object $\top$ is in fact the zero map $A \xrightarrow{0} \top$. 

\begin{definition}\label{cartdiffdef} A \textbf{Cartesian differential category} \cite[Definition 2.1.1]{blute2009cartesian} is a Cartesian left additive category $\mathbb{X}$ equipped with a \textbf{differential combinator} $\mathsf{D}$, which is a family of operators ${\mathbb{X}(A,B) \xrightarrow{\mathsf{D}} \mathbb{X}(A \times A,B)}$, $f \mapsto \mathsf{D}[f]$, such that the following axioms hold:  
\begin{enumerate}[{\bf [CD.1]}]
\item $\mathsf{D}[f+g] = \mathsf{D}[f] + \mathsf{D}[g]$ and $\mathsf{D}[0]=0$;
\item $\left (1 \times (\pi_0 + \pi_1) \right) \mathsf{D}[f] = (1 \times \pi_0)\mathsf{D}[f] + (1 \times \pi_1)\mathsf{D}[f]$ and $\langle 1, 0 \rangle \mathsf{D}[f]=0$;
\item $\mathsf{D}[1]=\pi_1$, $\mathsf{D}[\pi_0] = \pi_1\pi_0$ and $\mathsf{D}[\pi_0] = \pi_1\pi_1$;
\item $\mathsf{D}[\langle f, g \rangle] = \langle \mathsf{D}[f] , \mathsf{D}[g] \rangle$; 
\item $\mathsf{D}[fg] = \langle \pi_0 f, \mathsf{D}[f] \rangle \mathsf{D}[g]$; 
\item $\left( \langle 1,0 \rangle \times \langle 0,1 \rangle \right) \mathsf{D}^2[f] = \mathsf{D}[f]$; 
\item $c \mathsf{D}^2[f] = \mathsf{D}^2[f]$. 
\end{enumerate}
\end{definition}

It is important to note that unlike in \cite{blute2009cartesian,cockett2014differential,cockett2016differential}, we use the convention used in the more recent work on Cartesian differential categories where the linear argument of $\mathsf{D}[f]$ is its second argument rather than its first argument. We also note that the definition of a Cartesian differential category given here is not precisely that given in \cite[Definition 2.1.1]{blute2009cartesian} but rather an equivalent version found in \cite[Section 3.4]{cockett2016differential}. A discussion on the intuition for the differential combinator axioms can be found in \cite[Remark 2.1.3]{blute2009cartesian}. The canonical example of a Cartesian differential category is the category of Euclidean spaces and smooth maps between them -- which will also be our main example throughout this paper. 

\begin{example}\label{smoothex} \normalfont Let $\mathbb{R}$ be the set of real numbers and let $\mathsf{SMOOTH}$ be the category whose objects are the Euclidean vector spaces $\mathbb{R}^n$ (including the singleton $\mathbb{R}^0 = \lbrace \ast \rbrace$) and whose maps are smooth function ${\mathbb{R}^n \xrightarrow{F} \mathbb{R}^m}$, which of course are in fact tuples $F = \langle f_1, \hdots, f_m \rangle$ for some smooth functions ${\mathbb{R}^n \xrightarrow{f_i} \mathbb{R}}$. $\mathsf{SMOOTH}$ is a Cartesian differential category where the finite product structure and additive structure are defined in the obvious way, and whose differential combinator is given by the standard derivative of smooth functions. Explicitly, recall that for a smooth map $\mathbb{R}^n \xrightarrow{f} \mathbb{R}$, its gradient $\mathbb{R}^n \xrightarrow{\nabla(f)} \mathbb{R}^n$ is defined as:
\begin{align*}
\nabla(f)(\vec a) = \left( \frac{\partial f(\vec x)}{\partial x_1} (\vec a), \hdots, \frac{\partial f(\vec x)}{\partial x_n} (\vec a) \right)
\end{align*}
Then its derivative $\mathbb{R}^n \times \mathbb{R}^n \xrightarrow{\mathsf{D}[f]} \mathbb{R}$ is defined as: 
\begin{align*}
\mathsf{D}[f](\vec a, \vec b) = \nabla(f)(\vec a) \cdot \vec b = \sum \limits^n_{i=1} \frac{\partial f(\vec x)}{\partial x_i} (\vec a)b_i 
\end{align*}
where $\cdot$ is the dot product of vectors. In the case of $\mathbb{R}^n \xrightarrow{F} \mathbb{R}^m$, which is in fact a tuple of smooth functions $F = \langle f_1, \hdots, f_m \rangle$, we define $\mathbb{R}^n \times \mathbb{R}^n \xrightarrow{\mathsf{D}[F]} \mathbb{R}^m$ as $\mathsf{D}[F] = \langle \mathsf{D}[f_1] \hdots, \mathsf{D}[f_n] \rangle$. It is also possible to define $\mathsf{D}[F]$ in terms of the Jacobian matrix of $F$. 
\end{example}

Many other interesting examples of Cartesian differential categories can be found throughout the literature such as categorical models of the differential $\lambda$-calculus \cite{ehrhard2003differential}, which are called Cartesian \emph{closed} differential categories \cite{manzonetto2012categorical}, the subcategory of differential objects of a Cartesian tangent category \cite{cockett2014differential}, and the coKleisli category of a differential category \cite{blute2009cartesian,blute2006differential}. We will take a closer look at the coKleisli category of a differential category in Section \ref{diffexpsec}. 

An important class of maps in a Cartesian differential category is the class of linear maps. Later in Section \ref{dessec}, we will also discuss bilinear maps. 

\begin{definition}\label{lindef} In a Cartesian differential category, a map $f$ is said to be \textbf{linear} \cite[Definition 2.2.1]{blute2009cartesian} if $\mathsf{D}[f]= \pi_1 f$. 
\end{definition}

\begin{example}\label{smoothlinex} \normalfont In $\mathsf{SMOOTH}$, a map $\mathbb{R}^n \xrightarrow{F} \mathbb{R}^m$ is linear in the Cartesian differential sense precisely when it is $\mathbb{R}$-linear in the classical sense, that is, $F(s \vec a + t \vec b) = sF(\vec a) + t F(\vec b)$ for all $s,t \in \mathbb{R}$ and $\vec a, \vec b \in \mathbb{R}^n$. \end{example}

Here are now some useful properties about linear maps for this paper: 

\begin{lemma}\label{linlem} \cite[Lemma 2.2.2]{blute2009cartesian} In a Cartesian differential category:
\begin{enumerate}[{\em (i)}]
\item If $f$ is linear then $f$ is additive;
\item Identity maps $A \xrightarrow{1} A$ and projection maps $A \times B \xrightarrow{\pi_0} A$ and $A \times B \xrightarrow{\pi_1} B$ are linear;
\item If $A \xrightarrow{f} B$ and $B \xrightarrow{g} C$ are linear then their composition $A \xrightarrow{fg} C$ is linear;
\item If $A \xrightarrow{f} B$ is an isomorphism and is linear then its inverse $B \xrightarrow{f^{-1}} A$ is linear; 
\item If $C \xrightarrow{f} A$ and $C \xrightarrow{g} B$ are linear then their pairing $C \xrightarrow{\langle f, g \rangle} A \times B$ is linear;
\item If $A \xrightarrow{f} B$ and $C \xrightarrow{g} D$ are linear then their product $A \times C \xrightarrow{f \times g} B \times D$ is linear;
\item Zero maps $A \xrightarrow{0} B$ are linear;
\item If $A \xrightarrow{f} B$ and $A \xrightarrow{g} B$ are linear then their sum $A \xrightarrow{f+g} B$ is linear;
\item If $A \xrightarrow{f} B$ is linear then $\mathsf{D}[fg] = (f \times f) \mathsf{D}[g]$ and $\mathsf{D}[kf] = \mathsf{D}[k]f$;
\item The interchange map $(A \times B) \times (C \times D) \xrightarrow{c} (A \times C) \times (B \times D)$ is linear. 
\end{enumerate}
\end{lemma}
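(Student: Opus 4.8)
The plan is to treat each clause as a direct consequence of one or two of the differential combinator axioms, reading the defining equation $\mathsf{D}[f] = \pi_1 f$ in both directions. First I would establish the ``generators'' -- that identities, projections, and zero maps are linear -- and then the ``closure'' clauses that build new linear maps from old ones, being careful that the order of proof respects the dependencies (for instance (vi) and (x) are deduced from (ii), (iii), (v), so those must come first).

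Concretely, I would dispatch the atomic cases immediately: clause (ii) is exactly the content of \textbf{[CD.3]}, since $\mathsf{D}[1] = \pi_1 = \pi_1 1$, $\mathsf{D}[\pi_0] = \pi_1\pi_0$, and $\mathsf{D}[\pi_1] = \pi_1\pi_1$; clause (vii) follows from $\mathsf{D}[0] = 0 = \pi_1 0$ via \textbf{[CD.1]} and left additivity. For the closure clauses that preserve the differential form of linearity I would simply compute: clause (viii) from \textbf{[CD.1]}, using $\pi_1 f + \pi_1 g = \pi_1(f+g)$; clause (v) from \textbf{[CD.4]}, giving $\mathsf{D}[\langle f, g\rangle] = \langle \pi_1 f, \pi_1 g\rangle = \pi_1 \langle f, g\rangle$. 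The key computation underlying (iii) and (ix) is \textbf{[CD.5]}: when $f$ is linear, $\langle \pi_0 f, \mathsf{D}[f]\rangle = \langle \pi_0 f, \pi_1 f\rangle = f \times f$, which yields $\mathsf{D}[fg] = (f\times f)\mathsf{D}[g]$ (the first half of (ix)); substituting $\mathsf{D}[g] = \pi_1 g$ when $g$ is also linear and using $(f\times f)\pi_1 = \pi_1 f$ gives $\mathsf{D}[fg] = \pi_1(fg)$, proving (iii). The second half of (ix), $\mathsf{D}[kf] = \mathsf{D}[k]f$, falls out of \textbf{[CD.5]} via $\langle \pi_0 k, \mathsf{D}[k]\rangle \pi_1 f = \mathsf{D}[k]f$. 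With these in hand, (iv) is immediate: differentiating $f f^{-1} = 1$ through the first identity of (ix) gives $(f\times f)\mathsf{D}[f^{-1}] = \pi_1$, and since $f\times f$ is invertible we solve $\mathsf{D}[f^{-1}] = \pi_1 f^{-1}$. Clauses (vi) and (x) are then purely formal: $f\times g = \langle \pi_0 f, \pi_1 g\rangle$ and $c = \langle \pi_0\times\pi_0, \pi_1\times\pi_1\rangle$ are assembled from projections by composition, products, and pairings, so linearity follows by combining (ii), (iii), (v), (vi).

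The one clause that is genuinely a change of flavour -- and the step I expect to take the most care with -- is (i), passing from the differential characterization of linearity to the structural property of additivity. Here I would feed $\mathsf{D}[f] = \pi_1 f$ into the two halves of \textbf{[CD.2]}. The first half becomes $\pi_1(\pi_0 + \pi_1)f = \pi_1\pi_0 f + \pi_1\pi_1 f$ (as maps out of $A \times (A\times A)$); precomposing with $\langle 0, \langle g, h\rangle\rangle$ collapses this exactly to $(g+h)f = gf + hf$. For the unit law, the second half of \textbf{[CD.2]} gives $\langle 1, 0\rangle \pi_1 f = 0$, i.e.\ $0f = 0$ for the zero endomorphism of $A$; extending this to zero maps out of an arbitrary object $C$ requires a short bookkeeping step using $0_{C,A} = 0_{C,A}\, 0_{A,A}$ together with the left-additive identity $f0 = 0$ from \eqref{add}. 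This clause is where one must be most attentive to the domains involved and to the distinction between the differential and the additive structure; the remaining clauses are mechanical once the relevant axiom is unwound.
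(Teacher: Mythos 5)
The paper does not prove this lemma itself; it is quoted verbatim from Blute--Cockett--Seely (Lemma 2.2.2 of \cite{blute2009cartesian}), and your argument is exactly the standard proof given there: read $\mathsf{D}[f]=\pi_1 f$ into \textbf{[CD.1]}--\textbf{[CD.5]} for the closure clauses, and into \textbf{[CD.2]} (plus left additivity) for additivity. All of your computations check out, including the two points that need care: the inverse clause (iv) via $(f\times f)\mathsf{D}[f^{-1}]=\pi_1$ with $f\times f$ invertible, and the extension of $0_{A,A}f=0$ to $0_{C,A}f=0$ using $f0=0$.
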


For a Cartesian differential category $\mathbb{X}$, define its subcategory of linear maps $\mathsf{LIN}[\mathbb{X}]$ to be the category whose objects are the same as $\mathbb{X}$ and whose maps are linear in $\mathbb{X}$. Lemma \ref{linlem} tells us that $\mathsf{LIN}[\mathbb{X}]$ is a well-defined category and also that it has finite biproducts, and thus is a Cartesian left additive category where every map is additive. $\mathsf{LIN}[\mathbb{X}]$ also inherits the differential combinator from $\mathbb{X}$ and so $\mathsf{LIN}[\mathbb{X}]$ is a Cartesian differential category where every map is linear. Therefore the obvious forgetful functor $\mathsf{LIN}[\mathbb{X}] \xrightarrow{\mathsf{U}} \mathbb{X}$ preserves the Cartesian differential structure strictly. 
 
The differential combinator of a Cartesian differential category induces an endofunctor and this endofunctor makes a Cartesian differential category a Cartesian tangent category. We will not review the full definition of a tangent category here but we will highlight certain properties that will be important for this paper. We invite the reader to read the full definition of a tangent category in \cite{cockett2014differential,cockett2016differential}. 

\begin{proposition}\label{tangentfunctorprop} {\cite[Proposition 4.7]{cockett2014differential}} Every Cartesian differential category $\mathbb{X}$ is a Cartesian tangent category where the \textbf{tangent functor} $\mathsf{T}: \mathbb{X} \to \mathbb{X}$ is defined on objects as $\mathsf{T}(A) := A \times A$ and on morphisms as $\mathsf{T}(f) := \langle \pi_0 f, \mathsf{D}[f] \rangle$. 
\end{proposition}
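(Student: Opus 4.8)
The plan is to exhibit explicitly every structural natural transformation of a Cartesian tangent structure, each built only from the differential combinator $\mathsf{D}$ and the Cartesian left additive structure, and then to check that each tangent-category axiom collapses to one of \textbf{[CD.1]}--\textbf{[CD.7]} or to a routine manipulation of tuples. First I would confirm that $\mathsf{T}$ is a functor: $\mathsf{T}(1) = \langle \pi_0, \mathsf{D}[1] \rangle = \langle \pi_0, \pi_1 \rangle = 1$ by \textbf{[CD.3]}, and $\mathsf{T}(fg) = \mathsf{T}(f)\mathsf{T}(g)$ is, after unwinding the pairing, exactly the chain rule \textbf{[CD.5]}.

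Next I would define the structure maps. The bundle projection is $p_A := \pi_0 : A \times A \to A$; the zero section is $0_A := \langle 1, 0 \rangle : A \to A \times A$; writing $\mathsf{T}_2(A) = A \times A \times A$ for the pullback of $p_A$ along itself, the fibrewise sum is $+_A := \langle \pi_0, \pi_1 + \pi_2 \rangle$; the vertical lift is $\ell_A := \langle \langle \pi_0, 0 \rangle, \langle 0, \pi_1 \rangle \rangle : A \times A \to (A \times A) \times (A \times A)$; and the canonical flip $c_A$ is the interchange isomorphism $c$ of \eqref{cdef} on $\mathsf{T}^2(A) = (A \times A) \times (A \times A)$. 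The naturality of each map is where the individual axioms are consumed: naturality of $p$ is immediate from the first component of $\mathsf{T}(f)$; naturality of $0_A$ is the identity $\langle 1, 0 \rangle \mathsf{D}[f] = 0$ from \textbf{[CD.2]}; naturality of $+_A$ is the additivity of $\mathsf{D}[f]$ in its second argument, again \textbf{[CD.2]}; and naturality of the flip $c$ is precisely the symmetry of the second derivative \textbf{[CD.7]}, once one computes $\mathsf{T}^2(f)$ and isolates its $\mathsf{D}^2$-component. Throughout, Lemma \ref{linlem} is used to recognise that $p_A$, $0_A$, $+_A$, $\ell_A$, and $c_A$ are linear (hence additive) maps, which streamlines these verifications.

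Then I would verify the bundle and coherence axioms: that $(p, +, 0)$ makes $\mathsf{T}$ an additive bundle (associativity, unit, and commutativity of $+_A$ all come from the commutative monoid structure on the homsets and the biproduct structure of $\mathsf{LIN}[\mathbb{X}]$), and that $\ell$ and $c$ are additive bundle morphisms satisfying the usual coherences, such as $cc = 1$ from \eqref{cdef}, $\ell c = \ell$, and the associated pentagon- and hexagon-type equations. These are all straightforward equalities between tuples of projections, sums, and zeros. The one genuinely non-formal requirement is the universality of the vertical lift, asserting that a canonical comparison map $\mathsf{T}_2(A) \to \mathsf{T}^2(A)$ built from $\ell$ (together with $\mathsf{T}(p)$ and $\mathsf{T}(0)$) exhibits the relevant equalizer; I expect this to be the main obstacle. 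It is exactly where \textbf{[CD.6]}, the equation $(\langle 1,0 \rangle \times \langle 0,1 \rangle)\mathsf{D}^2[f] = \mathsf{D}[f]$, is needed, since it guarantees that the ``linear part'' of the second derivative is recovered correctly. I would write the comparison map and its candidate inverse in coordinates and check the two composites are identities using \textbf{[CD.6]} together with \textbf{[CD.2]}.

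Finally, to upgrade this to a \emph{Cartesian} tangent category I would check that $\mathsf{T}$ preserves finite products. The comparison map $\mathsf{T}(A \times B) \to \mathsf{T}(A) \times \mathsf{T}(B)$ is an isomorphism because $\mathsf{D}$ commutes with pairings by \textbf{[CD.4]}, so that $\mathsf{D}[\langle f, g \rangle] = \langle \mathsf{D}[f], \mathsf{D}[g] \rangle$, while $\mathsf{T}(\top) = \top \times \top = \top$. Compatibility of this isomorphism with all of the structure maps above is again routine, so that the projections and pairings become tangent morphisms, completing the verification that $\mathbb{X}$ is a Cartesian tangent category.
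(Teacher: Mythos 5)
The first thing to note is that the paper contains no proof of this proposition: it is imported verbatim, with citation, from \cite[Proposition 4.7]{cockett2014differential}. So the comparison point is the standard verification in that reference, and your outline does follow its overall shape: the same structure maps $p_A = \pi_0$, $0_A = \langle 1, 0\rangle$, $+_A = \langle \pi_0, \pi_1 + \pi_2 \rangle$, $\ell_A$, $c$, and the correct bookkeeping of \textbf{[CD.3]} and \textbf{[CD.5]} for functoriality, \textbf{[CD.2]} for the zero section and the fibrewise sum, \textbf{[CD.7]} for the flip, and \textbf{[CD.4]} for preservation of products.

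The genuine gap is in your handling of the vertical lift, and it is concrete: your list of naturality checks covers $p$, $0$, $+$ and $c$ but never $\ell$, while \textbf{[CD.6]} is parked instead inside the universality of the lift, which you call the main obstacle. This is exactly backwards. Your lift $\ell_A = \langle \langle \pi_0, 0 \rangle, \langle 0, \pi_1 \rangle\rangle$ is literally the map $\langle 1,0\rangle \times \langle 0,1\rangle$ occurring in \textbf{[CD.6]}, and that axiom asserts precisely $\ell_A \mathsf{D}^2[f] = \mathsf{D}[f]$; this, together with $\langle 1,0\rangle \mathsf{D}[f] = 0$ from \textbf{[CD.2]}, is exactly what makes the naturality square for $\ell$ commute. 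Indeed, expanding
\[ \mathsf{T}^2(f) = \left\langle \left\langle \pi_0\pi_0 f,\ \pi_0 \mathsf{D}[f] \right\rangle, \left\langle (\pi_0 \times \pi_0)\mathsf{D}[f],\ \mathsf{D}^2[f] \right\rangle \right\rangle, \]
precomposing with $\ell_A$ sends both middle components to $\langle \pi_0, 0\rangle \mathsf{D}[f] = 0$ by \textbf{[CD.2]} and the last component to $\mathsf{D}[f]$ by \textbf{[CD.6]}, giving $\ell_A \mathsf{T}^2(f) = \langle \langle \pi_0 f, 0 \rangle, \langle 0, \mathsf{D}[f] \rangle \rangle = \mathsf{T}(f)\, \ell_B$. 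By contrast, the universality of the lift consumes no differential axiom beyond \textbf{[CD.3]}: the comparison map $\mathsf{T}_2(A) \to \mathsf{T}^2(A)$ and the pair of maps it equalizes, $\mathsf{T}(p) = \langle \pi_0 \pi_0, \pi_1 \pi_0 \rangle$ and $\pi_0 \pi_0 \langle 1, 0 \rangle$, are all tuples of projections, sums, and zero maps, so both the equalizing condition and the unique factorization are routine product-and-additivity manipulations (uniqueness holds because the comparison map is split monic via a projection tuple); no $\mathsf{D}^2[f]$ of a general $f$ ever appears there. So, as written, your plan stalls twice: you would search in vain for a use of \textbf{[CD.6]} in the universality argument, while the one square that genuinely needs it goes unchecked. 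The repair is local---prove naturality of $\ell$ as above---after which the rest of your outline goes through.
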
 

\begin{corollary}\label{lintan} For a Cartesian differential category $\mathbb{X}$, $\mathsf{LIN}[\mathbb{X}]$ is a Cartesian tangent category where the tangent functor $\mathsf{T}: \mathsf{LIN}[\mathbb{X}] \to \mathsf{LIN}[\mathbb{X}]$ is defined on objects as $\mathsf{T}(A) := A \times A$ and on morphisms as $\mathsf{T}(f) := f \times f$. Furthermore, the forgetful functor $\mathsf{LIN}[\mathbb{X}] \xrightarrow{\mathsf{U}} \mathbb{X}$ preserves the Cartesian tangent structure strictly.
\end{corollary}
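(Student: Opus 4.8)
The plan is to show that $\mathsf{LIN}[\mathbb{X}]$ inherits a Cartesian tangent category structure from $\mathbb{X}$ (via Proposition \ref{tangentfunctorprop}), with the key simplification that the tangent functor restricts to $\mathsf{T}(f) = f \times f$ on linear maps. First I would verify that $\mathsf{T}$ is well-defined as an endofunctor on $\mathsf{LIN}[\mathbb{X}]$: on objects $\mathsf{T}(A) = A \times A$ is unproblematic, and on a linear map $f$ the general formula $\mathsf{T}(f) = \langle \pi_0 f, \mathsf{D}[f]\rangle$ simplifies, since by Definition \ref{lindef} linearity means $\mathsf{D}[f] = \pi_1 f$, so $\mathsf{T}(f) = \langle \pi_0 f, \pi_1 f \rangle = f \times f$. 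I must then confirm $\mathsf{T}(f)$ is itself linear so that it lands in $\mathsf{LIN}[\mathbb{X}]$: this follows immediately from Lemma \ref{linlem}(vi), which states that the product of linear maps is linear. Functoriality ($\mathsf{T}(1) = 1$ and $\mathsf{T}(fg) = \mathsf{T}(f)\mathsf{T}(g)$) is then just the routine functoriality of the product operation $(-) \times (-)$.

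Next I would address the tangent structure itself. Since Proposition \ref{tangentfunctorprop} already equips $\mathbb{X}$ with a full Cartesian tangent category structure, the natural strategy is to show that all the structural natural transformations of that tangent structure --- the projection $p \colon \mathsf{T} \Rightarrow \mathrm{Id}$, the zero $0 \colon \mathrm{Id} \Rightarrow \mathsf{T}$, the sum, the vertical lift $\ell$, and the canonical flip $c$ --- have components that are themselves linear maps, and hence restrict to natural transformations in $\mathsf{LIN}[\mathbb{X}]$. Each of these components is built from identities, projections, zero maps, pairings, products, and the interchange map $c$; by the various clauses of Lemma \ref{linlem} (particularly (ii), (v), (vi), (vii), and (x), which guarantee linearity of projections, pairings, products, zero maps, and the interchange map respectively) each such component is linear. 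The coherence equations and the universality conditions (such as the universality of the vertical lift) then hold in $\mathsf{LIN}[\mathbb{X}]$ because they already hold in $\mathbb{X}$ and the forgetful functor $\mathsf{U}$ is faithful and identity-on-objects, so any diagram commuting in $\mathbb{X}$ between maps that happen to be linear also commutes in $\mathsf{LIN}[\mathbb{X}]$.

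Finally, for the ``furthermore'' claim that $\mathsf{U} \colon \mathsf{LIN}[\mathbb{X}] \to \mathbb{X}$ preserves the tangent structure strictly, I would observe that $\mathsf{U}$ is already known to preserve the Cartesian differential structure strictly, so it commutes with $\mathsf{D}$ on the nose; strict preservation of $\mathsf{T}$ then amounts to checking $\mathsf{U}(\mathsf{T}(f)) = \mathsf{T}(\mathsf{U}(f))$, which holds because for a linear map $f$ both sides equal $f \times f = \langle \pi_0 f, \mathsf{D}[f]\rangle$ by the linearity identity. Strict preservation of the structural natural transformations is then immediate since $\mathsf{U}$ is identity-on-objects and sends each linear structural component to the very same map in $\mathbb{X}$.

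The main obstacle I anticipate is not any single deep computation but rather the bookkeeping of verifying that \emph{every} piece of the tangent structure restricts to $\mathsf{LIN}[\mathbb{X}]$, i.e. marshalling the right clause of Lemma \ref{linlem} for each structural map; the conceptually delicate point is the vertical lift $\ell$ and the requirement (in a tangent category) that certain squares be pullbacks preserved by $\mathsf{T}$, but since $\mathsf{LIN}[\mathbb{X}]$ has biproducts and the relevant limits are built from the additive/product structure that $\mathsf{LIN}[\mathbb{X}]$ shares with $\mathbb{X}$, these universal properties transfer cleanly. In fact, because every map in $\mathsf{LIN}[\mathbb{X}]$ is linear (hence additive) and $\mathsf{T}(f) = f \times f$ is simply the biproduct functor, $\mathsf{LIN}[\mathbb{X}]$ is an especially well-behaved tangent category, so the verification is genuinely routine once the linearity of each component is established.
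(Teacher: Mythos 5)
Your proposal is correct, but it takes a different route from the paper. The paper gives no explicit proof: the corollary is meant to follow immediately from Proposition \ref{tangentfunctorprop} applied to $\mathsf{LIN}[\mathbb{X}]$ \emph{itself}, since the preceding discussion establishes that $\mathsf{LIN}[\mathbb{X}]$ is a Cartesian differential category in its own right (linear maps are closed under all the relevant operations, and the combinator restricts). The tangent structure on $\mathsf{LIN}[\mathbb{X}]$ is then the one produced intrinsically by that proposition, and the formula $\mathsf{T}(f) = \langle \pi_0 f, \mathsf{D}[f]\rangle = \langle \pi_0 f, \pi_1 f\rangle = f \times f$ is just Definition \ref{lindef}; strict preservation by $\mathsf{U}$ is automatic because the differential combinator and products of $\mathsf{LIN}[\mathbb{X}]$ are literally those of $\mathbb{X}$, so the two constructions output the very same structural maps. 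You instead restrict the ambient tangent structure of $\mathbb{X}$ componentwise, checking via the clauses of Lemma \ref{linlem} that the tangent functor and each structural transformation (projection, zero, sum, lift, flip) is linear, and that the pullback/universality conditions transfer. Both arguments are sound and rest on the same underlying facts; the paper's route is a one-line application of already-built machinery, while yours is more laborious but makes the ``furthermore'' clause transparent and never needs to invoke that $\mathsf{LIN}[\mathbb{X}]$ is itself a Cartesian differential category. One small bookkeeping remark: in your list of Lemma \ref{linlem} clauses you should also include (viii) (sums of linear maps are linear), which is what makes the fibrewise addition $\mathsf{T}_2 \Rightarrow \mathsf{T}$ of the tangent bundle linear, and (iii) (closure under composition), used implicitly whenever structural maps are composites.
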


Here are now some useful properties involving the tangent functor (which we leave to the reader to check for themselves): 

\begin{lemma}\label{tanlem} In a Cartesian differential category:
\begin{enumerate}[{\em (i)}]
\item $\mathsf{D}[fg] = \mathsf{T}(f) \mathsf{D}[g]$;
\item $\langle 1, 0 \rangle \mathsf{T}(f) = f \langle 1, 0 \rangle$; 
\item If $f$ is linear then $\mathsf{T}(f) = f \times f$; 
\item $\mathsf{T}(\langle f,g \rangle) = \langle \mathsf{T}(f), \mathsf{T}(g) \rangle c$; 
\item $\mathsf{D}[f \times g] = c\left(\mathsf{D}[f] \times \mathsf{D}[g] \right)$ and $\mathsf{T}(f \times g) c = c (\mathsf{T}(f) \times \mathsf{T}(g))$;
\item $\mathsf{T}(f+g) = \mathsf{T}(f) + \mathsf{T}(g)$ and $\mathsf{T}(0) = 0$;
\item $\mathsf{D}\left[\mathsf{T}(f) \right] = c \mathsf{T}(\mathsf{D}[f])$. 
\end{enumerate}
\end{lemma}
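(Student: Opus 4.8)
The plan is to verify each of the seven identities by a direct computation, in every case unfolding the definition $\mathsf{T}(f) = \langle \pi_0 f, \mathsf{D}[f]\rangle$ from Proposition \ref{tangentfunctorprop} and then applying the appropriate differential combinator axioms together with the properties of linear maps recorded in Lemma \ref{linlem}. The auxiliary facts I will use repeatedly are: that $\mathsf{D}$ distributes over pairings by \textbf{[CD.4]}, so $\mathsf{D}[\langle f, g\rangle] = \langle \mathsf{D}[f], \mathsf{D}[g]\rangle$; that for a linear map $\ell$ one has $\mathsf{D}[\ell g] = (\ell \times \ell)\mathsf{D}[g]$ by Lemma \ref{linlem}(ix), applied with $\ell = \pi_0$ or $\ell = \pi_1$; that, since projections are additive, a sum of pairings into a product decomposes componentwise as $\langle a, b\rangle + \langle c, d\rangle = \langle a + c, b + d\rangle$; and that $c\pi_0 = \pi_0 \times \pi_0$ and $c\pi_1 = \pi_1\times\pi_1$, which is immediate from the definition (\ref{cdef}) of $c$.

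Several parts are then essentially immediate. Identity (i) is just \textbf{[CD.5]} read through the definition of $\mathsf{T}$, since $\mathsf{D}[fg] = \langle \pi_0 f, \mathsf{D}[f]\rangle \mathsf{D}[g] = \mathsf{T}(f)\mathsf{D}[g]$. For (ii) I would compute $\langle 1, 0\rangle\mathsf{T}(f) = \langle \langle 1,0\rangle\pi_0 f,\, \langle 1, 0\rangle\mathsf{D}[f]\rangle$ and simplify using $\langle 1, 0\rangle\pi_0 = 1$ and the second equation of \textbf{[CD.2]}, namely $\langle 1, 0\rangle\mathsf{D}[f] = 0$, to obtain $\langle f, 0\rangle$, which equals $f\langle 1, 0\rangle$ by left additivity ($f0 = 0$). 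Identity (iii) is immediate from the definition of linearity $\mathsf{D}[f] = \pi_1 f$, giving $\mathsf{T}(f) = \langle \pi_0 f, \pi_1 f\rangle = f\times f$. Identity (vi) follows from \textbf{[CD.1]} and left additivity: since $\mathsf{D}[f+g] = \mathsf{D}[f] + \mathsf{D}[g]$ and $\pi_0(f+g) = \pi_0 f + \pi_0 g$, the componentwise decomposition of sums of pairings gives $\mathsf{T}(f+g) = \mathsf{T}(f) + \mathsf{T}(g)$, and likewise $\mathsf{T}(0) = \langle 0, 0\rangle = 0$.

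The identities involving the interchange map $c$ demand more care with the bookkeeping. For (iv) I would expand both sides into nested pairings: on the left $\mathsf{T}(\langle f, g\rangle) = \langle \langle \pi_0 f, \pi_0 g\rangle, \langle \mathsf{D}[f], \mathsf{D}[g]\rangle\rangle$ via \textbf{[CD.4]}, and on the right precompose $\langle \mathsf{T}(f), \mathsf{T}(g)\rangle = \langle \langle \pi_0 f, \mathsf{D}[f]\rangle, \langle \pi_0 g, \mathsf{D}[g]\rangle\rangle$ with $c$, which swaps the two inner middle components; tracking projections shows the two agree. The first identity in (v) unfolds $f\times g = \langle \pi_0 f, \pi_1 g\rangle$, applies \textbf{[CD.4]} and then Lemma \ref{linlem}(ix) to the linear projections to get $\mathsf{D}[f\times g] = \langle (\pi_0\times\pi_0)\mathsf{D}[f], (\pi_1\times\pi_1)\mathsf{D}[g]\rangle$, which is exactly $c(\mathsf{D}[f]\times\mathsf{D}[g])$; the second identity of (v) then follows from this together with the definition of $\mathsf{T}$, or equivalently from naturality of $c$ and $cc = 1$.

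Finally, (vii) is the one genuinely nontrivial step, and it is where axiom \textbf{[CD.7]} enters. I would compute $\mathsf{D}[\mathsf{T}(f)] = \mathsf{D}[\langle \pi_0 f, \mathsf{D}[f]\rangle] = \langle \mathsf{D}[\pi_0 f], \mathsf{D}^2[f]\rangle$ by \textbf{[CD.4]}, simplifying $\mathsf{D}[\pi_0 f] = (\pi_0\times\pi_0)\mathsf{D}[f]$ via Lemma \ref{linlem}(ix). On the other side, $c\mathsf{T}(\mathsf{D}[f]) = c\langle \pi_0\mathsf{D}[f], \mathsf{D}^2[f]\rangle = \langle (\pi_0\times\pi_0)\mathsf{D}[f], c\mathsf{D}^2[f]\rangle$, using $c\pi_0 = \pi_0\times\pi_0$. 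The two expressions agree precisely because \textbf{[CD.7]} gives $c\mathsf{D}^2[f] = \mathsf{D}^2[f]$, which matches the second component. I expect this to be the main (indeed the only) conceptual point: every other part is routine rewriting, whereas (vii) relies essentially on the symmetry of second partial derivatives encoded in \textbf{[CD.7]}.
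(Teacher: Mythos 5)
Your proof is correct, and since the paper explicitly leaves Lemma \ref{tanlem} as an exercise ("which we leave to the reader to check for themselves"), your direct verification---unfolding $\mathsf{T}(f) = \langle \pi_0 f, \mathsf{D}[f] \rangle$, applying \textbf{[CD.1]}--\textbf{[CD.5]} and Lemma \ref{linlem}.(ix), and tracking the pairings through the interchange map $c$---is exactly the intended argument. Your identification of \textbf{[CD.7]} as the one essential, non-formal ingredient (needed precisely for part (vii)) is also accurate; every other part is routine rewriting, as you say.
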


We conclude this section with the observation that in a Cartesian differential category, the additive structure induces a canonical commutative monoid structure on every object. Cartesian left additive categories can be axiomatized in terms of equipping each object with a commutative monoid structure such that the projection maps are monoid morphisms \cite[Proposition 1.2.2]{blute2009cartesian}. 

In a Cartesian differential category, a commutative monoid is a triple $(A, \oast, i)$ consisting of an object $A$, map $A \times A \xrightarrow{\oast} A$, and a point $\top \xrightarrow{i} A$ such that the following diagrams commute
  \begin{equation}\label{ostarmonoideq}\begin{gathered} \xymatrixcolsep{2.5pc}\xymatrixrowsep{1.5pc}\xymatrix{
  A \ar[d]_-{\langle 0i, 1 \rangle} \ar@{=}[dr]^-{} \ar[r]^-{\langle 1, 0i  \rangle} & A \times A \ar[d]^-{\oast} & (A \times A) \times A \ar[rr]^-{\oast \times 1} \ar[d]_-{\cong} & & A \times A \ar[dd]^-{\oast}  \\ 
  A \times A \ar[r]_-{\oast} & A & A \times (A \times A) \ar[d]_-{1 \times \oast} \\
   & & A \times A \ar[rr]_-{\oast} && A } \\    
   \xymatrixcolsep{3pc}\xymatrixrowsep{1.5pc}\xymatrix{ A \times A \ar[dr]_-{\oast} \ar[r]^-{\langle \pi_1, \pi_0 \rangle} & A \times A \ar[d]^-{\oast} \\ 
& A}
   \end{gathered}\end{equation} 
   
\begin{lemma}\label{opluslem} In a Cartesian differential category, for an object $A$ define the map $A \times A \xrightarrow{\oplus} A$ as $\oplus = \pi_0 + \pi_1$. Then the triple $(A, \oplus, 0)$ is a commutative monoid and furthermore: 
\begin{enumerate}[{\em (i)}]
\item $\oplus$ is linear, that is, $\mathsf{D}[\oplus] = \pi_1 \oplus$;
\item $\mathsf{T}(\oplus) = \oplus \times \oplus$;
\item $c \mathsf{T}(\oplus) = \oplus$; 
\item A map $f$ is additive if and only if $\oplus f = (f \times f) \oplus$ and $0f = f$. 
\end{enumerate}
\end{lemma}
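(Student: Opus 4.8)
The plan is to isolate the single computational fact that powers the whole lemma: for any parallel pair $X \xrightarrow{g} A$ and $X \xrightarrow{h} A$, the pairing satisfies $\langle g, h \rangle \oplus = g + h$. This is immediate from $\oplus = \pi_0 + \pi_1$ and left-additivity (\ref{add}), since $\langle g,h \rangle(\pi_0 + \pi_1) = \langle g,h \rangle \pi_0 + \langle g,h \rangle \pi_1 = g + h$. With this in hand, the commutative monoid axioms (\ref{ostarmonoideq}) for $(A, \oplus, 0)$ collapse onto the commutative-monoid laws already holding in each hom-set. The two unit laws become $\langle 1, 0 \rangle \oplus = 1 + 0 = 1$ and $\langle 0, 1 \rangle \oplus = 0 + 1 = 1$ (noting that the unit point is $i = 0$, so $0i = 0$); commutativity becomes $\langle \pi_1, \pi_0 \rangle \oplus = \pi_1 + \pi_0 = \pi_0 + \pi_1 = \oplus$; and for associativity I would expand both legs of the diagram and show each equals the threefold sum $\pi_0\pi_0 + \pi_0\pi_1 + \pi_1$ of the three projections of $(A \times A) \times A$, using the observation together with associativity and commutativity of the hom-set addition.

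For (i), rather than unfolding \textbf{[CD.1]} and \textbf{[CD.3]} by hand, I would note that $\oplus = \pi_0 + \pi_1$ is a sum of the projections, each linear by Lemma \ref{linlem}(ii), so $\oplus$ is linear by Lemma \ref{linlem}(viii); equivalently $\mathsf{D}[\oplus] = \mathsf{D}[\pi_0] + \mathsf{D}[\pi_1] = \pi_1\pi_0 + \pi_1\pi_1 = \pi_1(\pi_0 + \pi_1) = \pi_1 \oplus$. Part (ii) is then immediate: since $\oplus$ is linear, Lemma \ref{tanlem}(iii) gives $\mathsf{T}(\oplus) = \oplus \times \oplus$.

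For (iii) I would first substitute (ii) to get $c\mathsf{T}(\oplus) = c(\oplus \times \oplus)$, reading the right-hand $\oplus$ as the monoid multiplication on the object $A \times A$, that is $\pi_0 + \pi_1$ with the projections of $(A \times A) \times (A \times A)$. Expanding $c = \langle \pi_0 \times \pi_0, \pi_1 \times \pi_1 \rangle$ and using $\langle u,v \rangle(\oplus \times \oplus) = \langle (\pi_0 \times \pi_0)\oplus, (\pi_1 \times \pi_1)\oplus \rangle$ followed by the observation above, I would compute $c(\oplus \times \oplus) = \langle \pi_0\pi_0 + \pi_1\pi_0, \; \pi_0\pi_1 + \pi_1\pi_1 \rangle$; on the other side, additivity of the projections (Lemma \ref{linlem}(i),(ii)) lets me rewrite $\oplus = \pi_0 + \pi_1 = \langle \pi_0\pi_0 + \pi_1\pi_0, \; \pi_0\pi_1 + \pi_1\pi_1 \rangle$, matching the two.

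Finally, (iv) characterizes additive maps as the monoid morphisms for $\oplus$. In the forward direction, if $f$ is additive then $\oplus f = (\pi_0 + \pi_1)f = \pi_0 f + \pi_1 f$ and $(f \times f)\oplus = (f \times f)\pi_0 + (f \times f)\pi_1 = \pi_0 f + \pi_1 f$, so the two agree, while the zero condition $0f = 0$ is part of additivity; conversely, given $\oplus f = (f \times f)\oplus$ together with the zero condition, I would recover $(g+h)f = \langle g,h \rangle \oplus f = \langle g,h \rangle(f \times f)\oplus = \langle gf, hf \rangle \oplus = gf + hf$ by applying the observation at both ends, which is exactly additivity. The only genuinely fiddly steps are the associativity verification and the index bookkeeping in (iii); everything else reduces mechanically to the identity $\langle g,h \rangle \oplus = g + h$ and to the fact that each hom-set is already a commutative monoid, so I expect no real obstacle beyond keeping the projections straight.
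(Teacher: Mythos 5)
Your proof is correct. The paper itself states Lemma \ref{opluslem} without proof (it is standard background from the Cartesian differential category literature), so there is no proof to compare against; your reduction of everything to the single identity $\langle g, h\rangle \oplus = g + h$ (a direct consequence of left additivity), together with the appeals to Lemma \ref{linlem} for (i)--(ii) and Lemma \ref{tanlem} for (ii), is exactly the expected verification, and your expansions for associativity and for (iii) check out. One small point worth making explicit: the statement of (iv) as printed reads ``$0f = f$,'' which is a typo for $0f = 0$, and your proof correctly treats it as the latter.
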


\section{Differential Exponential Maps}\label{DEMsec}

In this section, we introduce \emph{differential exponential maps}, which generalizes the notion of the classical exponential function $e^x$ for arbitrary Cartesian differential categories. 

\begin{definition}\label{DEMdef} A \textbf{differential exponential map} in a Cartesian differential category is a map $A \xrightarrow{e} A$, such that the following diagrams commute:
 \begin{equation}\label{dem}\begin{gathered} \xymatrixcolsep{5pc}\xymatrix{ A \ar@{=}[dr]_-{} \ar[r]^-{\langle 0, 1 \rangle} & A \times A \ar[d]^-{\mathsf{D}[e]} & A \times A \ar[r]^-{1 \times e} \ar[d]_-{\oplus} & A \times A \ar[d]^-{\mathsf{D}[e]} \\
 & A & A\ar[r]_-{e} & A } \end{gathered}\end{equation}
 where $\oplus$ is defined as in Lemma \ref{opluslem}. 
 \end{definition}

The intuition for a differential exponential map is best explained in Example \ref{expex}.i, which shows that the classical exponential function $e^x$ (which is, of course, the main motivating example) is a differential exponential map. Briefly, since $e^x$ is its own derivative, applying the differential combinator on $e^x$ results in the smooth function $\mathbb{R} \times \mathbb{R} \xrightarrow{\mathsf{D}[e^x]} \mathbb{R}$ defined as: 
\[\mathsf{D}[e^x](x,y)=e^xy\]
The left diagram of (\ref{dem}) generalizes that $e^0 y = y$ (since $e^0 =1$), while the right diagram generalizes that $e^{x}e^y=e^{x+y}$. The differential combinator is the key piece that allows one to bypass the need for a multiplication operation and a multiplicative unit in the definition of a differential exponential map. That said, in Section \ref{dessec} we will see that every differential exponential map does induce a multiplication and that analogues of the three essential properties of the classical exponential function are satisfied (Proposition \ref{prop2}). And conversely, we will also see how one can also axiomatize differential exponential maps in terms of rig structure and analogues of the three essential properties of the classical exponential function (Proposition \ref{prop1}). And, as mentioned in the introduction, we also highlight that the definition of a differential exponential map does not require any added structure or property on the Cartesian differential category such as a notion of converging limits or infinite sums. Before giving examples of differential exponential maps, which can be found in Example \ref{expex}, let us first consider the category of differential exponential maps and constructions of differential exponential maps. 

For a Cartesian differential category $\mathbb{X}$, define its category of differential exponential maps as the category $\mathsf{DEM}[\mathbb{X}]$ whose objects are pairs $(A, e)$ consisting of an object $A \in \mathbb{X}$ and a differential exponential map $A \xrightarrow{e} A$, and where a map $(A,e) \xrightarrow{f} (B,e^\prime)$ is a \emph{linear} map $A \xrightarrow{f} B$ in $\mathbb{X}$ such that the following diagram commutes: 
 \begin{equation}\label{demmap}\begin{gathered} \xymatrixcolsep{5pc}\xymatrixrowsep{1.5pc}\xymatrix{ A \ar[d]_-{e} \ar[r]^-{f} & B \ar[d]^-{e^\prime} \\
A \ar[r]_-{f} & B } \end{gathered}\end{equation}
and where composition and identity maps are as in $\mathbb{X}$. The reason for why maps in $\mathsf{DEM}[\mathbb{X}]$ are restricted to being linear will become apparent in the proof of Theorem \ref{isothm}. There is the obvious forgetful functor $\mathsf{DEM}[\mathbb{X}] \xrightarrow{\mathsf{U}} \mathsf{LIN}[\mathbb{X}]$ which maps $\mathsf{U}(A,e) = A$ and $\mathsf{U}(f) = f$.  

\begin{lemma}\label{reflectlem}The forgetful functor $\mathsf{DEM}[\mathbb{X}] \xrightarrow{\mathsf{U}} \mathsf{LIN}[\mathbb{X}]$ creates all limits. 
\end{lemma}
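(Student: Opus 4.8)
The plan is to use the standard recipe for showing a forgetful functor creates limits: given a diagram whose image has a limit downstairs, lift the extra structure (here the endomorphism $e$) to the limiting object in a unique way, and check that the lift is again limiting. So let $F \colon J \to \mathsf{DEM}[\mathbb{X}]$ be a diagram, write $F(j) = (A_j, e_j)$ and $F(\phi) = f_\phi$ for the (linear) transition maps, and suppose the underlying diagram $\mathsf{U}F$ has a limit $(L, \{\pi_j\})$ in $\mathsf{LIN}[\mathbb{X}]$, so each $\pi_j \colon L \to A_j$ is linear and $\pi_i f_\phi = \pi_j$ for every $\phi \colon i \to j$.

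First I would construct the differential exponential map $e_L \colon L \to L$, namely the unique map with $e_L \pi_j = \pi_j e_j$ for all $j$. To produce it, observe that the family $\{\pi_j e_j \colon L \to A_j\}$ is a cone over $\mathsf{U}F$ in $\mathbb{X}$: for $\phi \colon i \to j$ one has $\pi_i e_i f_\phi = \pi_i f_\phi e_j = \pi_j e_j$, where the first equality is the defining square (\ref{demmap}) of the $\mathsf{DEM}[\mathbb{X}]$-morphism $f_\phi$ and the second is the cone identity $\pi_i f_\phi = \pi_j$. Since the $e_j$ are not linear, this cone has non-linear legs, so to factor it through $L$ I need $(L, \{\pi_j\})$ to be a limit of $\mathsf{U}F$ in the ambient category $\mathbb{X}$, and not merely in $\mathsf{LIN}[\mathbb{X}]$; this is the crux of the argument, discussed at the end. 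Granting it, the universal property yields a unique $e_L \colon L \to L$ in $\mathbb{X}$ with $e_L \pi_j = \pi_j e_j$, and the family $\{\pi_j\}$ is jointly monic in $\mathbb{X}$, which will drive every uniqueness claim below.

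Next I would verify that $e_L$ satisfies the two squares of (\ref{dem}), checking each by post-composing with the jointly monic family $\{\pi_j\}$ and reducing to the corresponding axiom for $e_j$. For the left square, since $\pi_j$ is linear, Lemma \ref{linlem}(ix) gives $\mathsf{D}[e_L]\pi_j = \mathsf{D}[e_L \pi_j] = \mathsf{D}[\pi_j e_j] = (\pi_j \times \pi_j)\mathsf{D}[e_j]$, whence $\langle 0,1\rangle \mathsf{D}[e_L]\pi_j = \langle 0,1\rangle(\pi_j\times\pi_j)\mathsf{D}[e_j] = \pi_j \langle 0,1\rangle \mathsf{D}[e_j] = \pi_j$, using additivity of $\pi_j$ and the left axiom for $e_j$; as this equals $1_L \pi_j$ for every $j$, joint monicity forces $\langle 0,1\rangle\mathsf{D}[e_L] = 1_L$. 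The right square is handled identically: post-composing $(1\times e_L)\mathsf{D}[e_L]$ and $\oplus e_L$ with $\pi_j$, then pushing $\pi_j$ through $\mathsf{D}$ (Lemma \ref{linlem}(ix)), through $\oplus$ via additivity (Lemma \ref{opluslem}), and through the product using linearity of $\pi_j$, reduces both composites to the common map $(\pi_j\times\pi_j)\oplus e_j$ by the right axiom for $e_j$, so joint monicity concludes. By construction each $\pi_j$ is then a $\mathsf{DEM}[\mathbb{X}]$-morphism $(L,e_L) \to (A_j,e_j)$, giving a cone over $F$ lying over the chosen limit cone, and it is the unique such lift by joint monicity. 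For the limiting property, take any cone over $F$ in $\mathsf{DEM}[\mathbb{X}]$ with apex $(Y,e_Y)$ and legs $\sigma_j$, which are linear and satisfy $e_Y \sigma_j = \sigma_j e_j$; the universal property of $L$ in $\mathsf{LIN}[\mathbb{X}]$ yields a unique linear $u \colon Y \to L$ with $u\pi_j = \sigma_j$, and post-composing with $\pi_j$ gives $u e_L \pi_j = u \pi_j e_j = \sigma_j e_j = e_Y \sigma_j = e_Y u \pi_j$, so joint monicity yields $u e_L = e_Y u$, i.e. $u$ is a $\mathsf{DEM}[\mathbb{X}]$-morphism and is the required unique factorization.

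The \textbf{main obstacle} is the step flagged above: that a limit of a diagram of linear maps computed in $\mathsf{LIN}[\mathbb{X}]$ is again a limit in $\mathbb{X}$, so that the non-linear cone $\{\pi_j e_j\}$ factors and the legs are jointly monic in $\mathbb{X}$. For products this is transparent, since the product in $\mathsf{LIN}[\mathbb{X}]$ is the product $\times$ of $\mathbb{X}$ with the same projections, and these carry the $\mathbb{X}$-universal property for \emph{arbitrary} maps; this already suffices for the Cartesian tangent structure on $\mathsf{DEM}[\mathbb{X}]$, which only requires finite products. For a general limit the subtlety is real, because the linearization $\langle 0,1\rangle\mathsf{D}[-]$, which sends an arbitrary $\mathbb{X}$-map to a linear one and fixes linear maps, is not functorial, so the inclusion $\mathsf{LIN}[\mathbb{X}] \hookrightarrow \mathbb{X}$ need not preserve all limits formally; I would therefore argue it directly from the additive and biproduct structure of $\mathsf{LIN}[\mathbb{X}]$, exhibiting the limiting inclusion as a $\mathsf{LIN}[\mathbb{X}]$-retract and transporting the factorization along the retraction, and it is precisely verifying this compatibility with the limit cone that constitutes the genuine work of the lemma.
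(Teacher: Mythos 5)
Your construction and verification are, step for step, the paper's own proof: you lift $e$ by factoring the cone $\lbrace \pi_j e_j \rbrace$ through the limiting cone, you check both squares of (\ref{dem}) by post-composing with the projections and using Lemma \ref{linlem} (linearity, hence additivity, of the projections) together with Lemma \ref{opluslem}, and you establish the universal property of the lifted cone by the same cancellation argument. The genuine divergence is the obstacle you flag, and here you should know that the paper does not resolve it either: since a differential exponential map is never linear on a non-terminal object (Lemma \ref{noexp}), the cone $\lbrace \pi_j e_j \rbrace$ does not live in $\mathsf{LIN}[\mathbb{X}]$, yet the paper's proof invokes ``the universal property of the limit'' for exactly this cone to produce $\lim e_X$, and invokes it again to cancel the projections from $\oplus (\lim e_X) \pi_X = \left(1 \times \lim e_X\right) \mathsf{D}\!\left[\lim e_X\right]\pi_X$, whose two sides are likewise not linear. (The cancellation for the other axiom could be carried out inside $\mathsf{LIN}[\mathbb{X}]$, since linearization $\langle 0,1\rangle \mathsf{D}[-]$ yields linear maps and $1$ is linear, but the paper does not say this.) So your write-up is more scrupulous than the source, not less: both arguments tacitly assume that a limiting cone in $\mathsf{LIN}[\mathbb{X}]$ retains its universal property, or at least joint monicity, with respect to arbitrary maps of $\mathbb{X}$.

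That said, your proposed repair is the weakest part of your proposal. The retract argument you sketch works when the limit sits as a split subobject of a product of the $A_j$ inside $\mathsf{LIN}[\mathbb{X}]$, but equalizer-type limits in an additive category need not split, and nothing in the axioms guarantees that an arbitrary limit cone in $\mathsf{LIN}[\mathbb{X}]$ is even jointly monic in $\mathbb{X}$; so ``creates \emph{all} limits'' really does need either this compatibility as an explicit hypothesis or a restriction on the class of limits considered. Your observation about products is the operative one: any product in $\mathsf{LIN}[\mathbb{X}]$ is linearly isomorphic to the standard product, whose projections do have the $\mathbb{X}$-universal property for arbitrary maps, and every downstream use of Lemma \ref{reflectlem} in the paper (Corollary \ref{corprod}, Proposition \ref{DEMtangent}) requires only products and the pullbacks for tangent structure, which in a Cartesian differential category are again products. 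With that restriction, your argument and the paper's are both complete and coincide.
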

\begin{proof} Let $\mathbb{D} \xrightarrow{\mathsf{F}} \mathsf{DEM}[\mathbb{X}]$ be a functor such that the limit of the composite $\mathbb{D} \xrightarrow{\mathsf{F}} \mathsf{DEM}[\mathbb{X}] \xrightarrow{\mathsf{U}} \mathsf{LIN}[\mathbb{X}]$ exists in $ \mathsf{LIN}[\mathbb{X}]$ which we denote 
$\lim \limits_{X \in \mathbb{D}} \mathsf{U}\left(\mathsf{F}(X) \right)$ with projections $\lim \limits_{X \in \mathbb{D}} \mathsf{U}\left(\mathsf{F}(X) \right) \xrightarrow{\pi_x} \mathsf{U}\left(\mathsf{F}(X) \right)$. Note that $\mathsf{U}\left(\mathsf{F}(X) \right)$ is the underlying object of $\mathsf{F}(X)$, and so it comes equipped with a differential exponential map $\mathsf{U}\left(\mathsf{F}(X) \right) \xrightarrow{e_X} \mathsf{U}\left(\mathsf{F}(X) \right)$. Therefore $\mathsf{F}(X) = \left(\mathsf{U}\left(\mathsf{F}(X) \right), e_X \right)$. Now for every map $X \xrightarrow{f} Y$ in $\mathbb{D}$, since $\mathsf{F}(f)$ is a map in $\mathsf{DEM}[\mathbb{X}]$, the following diagram commutes:
\[\xymatrixcolsep{5pc}\xymatrixrowsep{1.5pc}\xymatrix{ & \lim \limits_{X \in \mathbb{D}} \mathsf{U}\left(\mathsf{F}(X) \right) \ar[dr]^-{\pi_Y} \ar[dl]_-{\pi_X} \\  
\mathsf{U}\left(\mathsf{F}(X) \right) \ar[d]_-{e_X} \ar[rr]^-{\mathsf{U}\left(\mathsf{F}(f) \right)} & & \mathsf{U}\left(\mathsf{F}(Y) \right) \ar[d]^-{e_Y} \\ 
\mathsf{U}\left(\mathsf{F}(X) \right) \ar[rr]_-{\mathsf{U}\left(\mathsf{F}(f) \right)} & & \mathsf{U}\left(\mathsf{F}(Y) \right)}\]
By the universal property of $\lim \limits_{X \in \mathbb{D}} \mathsf{U}\left(\mathsf{F}(X) \right)$, there is a unique map:
 \begin{equation}\label{pixeq0}\begin{gathered} {\lim \limits_{X \in \mathbb{D}} \mathsf{U}\left(\mathsf{F}(X) \right) \xrightarrow{\lim \limits_{X \in \mathbb{D}} e_X} \lim \limits_{X \in \mathbb{D}} \mathsf{U}\left(\mathsf{F}(X) \right)} \end{gathered}\end{equation}
 which makes the following diagram commute: 
\[ \xymatrixcolsep{5pc}\xymatrixrowsep{1.5pc}\xymatrix{ \lim \limits_{X \in \mathbb{D}} \mathsf{U}\left(\mathsf{F}(X) \right) \ar@{-->}[dd]_-{\exists ! ~ \lim \limits_{X \in \mathbb{D}} e_X} \ar[r]^-{\pi_X} & \mathsf{U}\left(\mathsf{F}(X) \right) \ar[dd]^-{e_X} \\ \\
\lim \limits_{X \in \mathbb{D}} \mathsf{U}\left(\mathsf{F}(X) \right) \ar[r]_-{\pi_X} & \mathsf{U}\left(\mathsf{F}(X) \right) } \]
We wish to show that $\lim \limits_{X \in \mathbb{D}} e_X$ is a differential exponential map. To do so, first note that for each $X \in \mathbb{D}$, $\pi_X$ is linear and so by Lemma \ref{linlem} it follows that: 
 \begin{equation}\label{pixeq1}\begin{gathered} \mathsf{D}\left[\lim \limits_{X \in \mathbb{D}} e_X \right] \pi_X = (\pi_X \times \pi_X) \mathsf{D}\left[e_X \right]
 \end{gathered}\end{equation}
Now since $\pi_X$ is linear, it is also additive (Lemma \ref{linlem}) and therefore we obtain the following:
 \begin{align*}
\langle 0,1 \rangle \mathsf{D}\left[\lim \limits_{X \in \mathbb{D}} e_X \right] \pi_X &=~\langle 0,1 \rangle (\pi_X \times \pi_X) \mathsf{D}\left[e_X \right] \tag{\ref{pixeq1}}\\
&=~ \left \langle 0 \pi_X, \pi_X \right \rangle \mathsf{D}\left[e_X \right] \\
&=~ \langle 0, \pi_X \rangle \mathsf{D}\left[e_X \right] \tag{$\pi_X$ is additive} \\
&=~ \pi_X \langle 0, 1 \rangle \mathsf{D}\left[e_X \right] \\
&=~\pi_X \tag{\ref{dem}} \\\\
\oplus (\lim \limits_{X \in \mathbb{D}} e_X) \pi_X &=~ \oplus \pi_X e_X \tag{\ref{pixeq0}} \\
&=~ (\pi_X \times \pi_X) \oplus e_X \tag{$\pi_X$ is additive + Lemma \ref{opluslem}} \\
&=~ (\pi_X \times \pi_X) (1 \times e_X) \mathsf{D}[e_X] \tag{\ref{dem}} \\
&=~ \left(1 \times \lim \limits_{X \in \mathbb{D}} e_X \right) (\pi_X \times \pi_X) \mathsf{D}\left[e_X \right] \tag{\ref{pixeq0}} \\
&=~\left(1 \times \lim \limits_{X \in \mathbb{D}} e_X \right) \mathsf{D}\left[\lim \limits_{X \in \mathbb{D}} e_X \right] \pi_X \tag{\ref{pixeq1}}
\end{align*}
Therefore for each $X \in \mathbb{D}$, we have that: 
 \begin{align*}\left \langle 0,1 \right\rangle \mathsf{D}\left[\lim \limits_{X \in \mathbb{D}} e_X \right] \pi_X = \pi_X && \oplus (\lim \limits_{X \in \mathbb{D}} e_X) \pi_X = \left(1 \times \lim \limits_{X \in \mathbb{D}} e_X \right) \mathsf{D}\left[\lim \limits_{X \in \mathbb{D}} e_X \right] \pi_X\end{align*} 
 Then by the universal property of the limit, it follows that:
 \begin{align*}
\left \langle 0,1 \right\rangle \mathsf{D}\left[\lim \limits_{X \in \mathbb{D}} e_X \right] = 1 && \oplus (\lim \limits_{X \in \mathbb{D}} e_X) = \left(1 \times \lim \limits_{X \in \mathbb{D}} e_X \right) \mathsf{D}\left[\lim \limits_{X \in \mathbb{D}} e_X \right] 
\end{align*}
and so we conclude that $\lim \limits_{X \in \mathbb{D}} e_X$ is a differential exponential map. From here, it is straightforward to conclude that the limit of $\mathbb{D} \xrightarrow{\mathsf{F}} \mathsf{DEM}[\mathbb{X}]$ is the pair:
 \[(\lim \limits_{X \in \mathbb{D}} \mathsf{U}\left(\mathsf{F}(X) \right), \lim \limits_{X \in \mathbb{D}} e_X)\]
 with projections $(\lim \limits_{X \in \mathbb{D}} \mathsf{U}\left(\mathsf{F}(X) \right), \lim \limits_{X \in \mathbb{D}} e_X) \xrightarrow{\pi_X} \left(\mathsf{U}\left(\mathsf{F}(X) \right), e_X \right)$. Furthermore, $\mathsf{U}(\lim \limits_{X \in \mathbb{D}} \mathsf{U}\left(\mathsf{F}(X) \right), \lim \limits_{X \in \mathbb{D}} e_X) = \lim \limits_{X \in \mathbb{D}} \mathsf{U}\left(\mathsf{F}(X) \right)$ and $\mathsf{U}(\pi_X) = \pi_X$, and it follows from the definition of $\lim \limits_{X \in \mathbb{D}} e_X$ that this is the unique cone over $\mathsf{F}$ with this property. Therefore, we conclude that $\mathsf{DEM}[\mathbb{X}] \xrightarrow{\mathsf{U}} \mathsf{LIN}[\mathbb{X}]$ creates all limits. 
\end{proof} 

By Lemma \ref{linlem}, the projection maps of the product are linear. As such, an immediate consequence of Lemma \ref{reflectlem} is that the product of differential exponential maps is again a differential exponential map. Therefore, the category of differential exponentials maps has finite products. 

\begin{corollary}\label{corprod} In a Cartesian differential category:
\begin{enumerate}[{\em (i)}]
\item For the terminal object $\top$, $\top \xrightarrow{1} \top$ is a differential exponential map; 
\item If $A \xrightarrow{e} A$ and $B \xrightarrow{e^\prime} B$ are differential exponential maps, then their product $A \times B \xrightarrow{e \times e^\prime} A \times B$ is a differential exponential map.
\end{enumerate}
Furthermore, for a Cartesian differential category $\mathbb{X}$, $\mathsf{DEM}[\mathbb{X}]$ has finite products where the terminal object is $(\top, 1)$, and where the product of $(A, e)$ and $(B, e^\prime)$ is $(A \times B, e \times e^\prime)$ with projection maps $(A \times B, e \times e^\prime) \xrightarrow{\pi_0} (A,e)$ and $(A \times B, e \times e^\prime) \xrightarrow{\pi_1} (B,e^\prime)$.
\end{corollary}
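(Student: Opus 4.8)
The plan is to derive everything from Lemma~\ref{reflectlem}, since the corollary is precisely the statement that the finite-product structure of $\mathsf{LIN}[\mathbb{X}]$ lifts along $\mathsf{U}$. Recall from the discussion following Lemma~\ref{linlem} that $\mathsf{LIN}[\mathbb{X}]$ has finite (bi)products: its terminal object is $\top$ and the binary product of $A$ and $B$ is $A \times B$ with the linear projections $\pi_0, \pi_1$. A terminal object is the limit of the empty diagram and a binary product is the limit of a two-object discrete diagram, so both are instances of the limits created by $\mathsf{U}$ in Lemma~\ref{reflectlem}. I would therefore first invoke Lemma~\ref{reflectlem} to conclude immediately that $\mathsf{DEM}[\mathbb{X}]$ has a terminal object and binary products, and that $\mathsf{U}$ sends them to the corresponding limits in $\mathsf{LIN}[\mathbb{X}]$.

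The remaining work is only to identify the created structures explicitly, which amounts to unwinding the construction of $\lim e_X$ from the proof of Lemma~\ref{reflectlem}. For the empty diagram, the limit in $\mathsf{LIN}[\mathbb{X}]$ is $\top$, and the created differential exponential map is the unique endomorphism of $\top$; since $\top$ is terminal the hom-set $\mathbb{X}(\top, \top)$ is trivial, so this map is $1_\top$. This proves (i) and shows that $(\top, 1)$ is terminal in $\mathsf{DEM}[\mathbb{X}]$. For the two-object discrete diagram on $(A,e)$ and $(B, e^\prime)$, the limit in $\mathsf{LIN}[\mathbb{X}]$ is $A \times B$ with projections $\pi_0, \pi_1$, and the defining property of the created map (the commuting square in the proof of Lemma~\ref{reflectlem}) says it is the unique map $g$ with $g\pi_0 = \pi_0 e$ and $g \pi_1 = \pi_1 e^\prime$. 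Since $e \times e^\prime = \langle \pi_0 e, \pi_1 e^\prime \rangle$ satisfies exactly these equations, the created map is $e \times e^\prime$. This proves (ii) and identifies the product in $\mathsf{DEM}[\mathbb{X}]$ as $(A \times B, e \times e^\prime)$ with projections $\pi_0, \pi_1$, which are indeed maps in $\mathsf{DEM}[\mathbb{X}]$ precisely because they are linear and make those squares commute.

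I expect no serious obstacle here: the content of the corollary is entirely carried by Lemma~\ref{reflectlem}, and the only care required is matching the abstractly constructed map $\lim e_X$ with the concrete maps $1_\top$ and $e \times e^\prime$. If instead one preferred a self-contained verification of (i) and (ii), the main computation would be to check the two axioms of (\ref{dem}) directly for $e \times e^\prime$, using $\mathsf{D}[e \times e^\prime] = c(\mathsf{D}[e] \times \mathsf{D}[e^\prime])$ from Lemma~\ref{tanlem}, together with the identities $\langle 0,1\rangle c = \langle 0, 1\rangle \times \langle 0, 1\rangle$ and $\oplus = c(\oplus \times \oplus)$ for the product commutative monoid; this route is routine but more tedious, and the slightly delicate point would be the bookkeeping of the interchange map $c$ in the second axiom.
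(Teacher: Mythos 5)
Your proposal is correct and matches the paper's own route: the paper derives this corollary exactly as an immediate consequence of Lemma~\ref{reflectlem}, noting that the projections are linear (Lemma~\ref{linlem}) so that terminal objects and binary products in $\mathsf{LIN}[\mathbb{X}]$ are limits created by $\mathsf{U}$. Your explicit identification of the created maps as $1_\top$ and $e \times e^\prime$ is just a careful unwinding of what the paper leaves implicit, and it is carried out correctly.
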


It is important to note that a differential exponential map for $A \times B$ is not necessarily the product of differential exponential maps, that is, of the form $e \times e^\prime$. See Example \ref{expex} for three examples of differential exponential maps which are not the products of differential exponential maps. 

Our next observation is that the category of differential exponential maps is also a Cartesian tangent category. We first show that the tangent functor maps differential exponential maps to differential exponential maps. 
  
\begin{lemma}\label{exptan} If $A \xrightarrow{e} A$ is a differential exponential map, then $A \times A \xrightarrow{\mathsf{T}(e)} A \times A$ is a differential exponential map.
\end{lemma}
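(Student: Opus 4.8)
The plan is to verify that $\mathsf{T}(e) = \langle \pi_0 e, \mathsf{D}[e] \rangle$ satisfies the two axioms of (\ref{dem}) for the object $A \times A$, namely $\langle 0,1 \rangle \mathsf{D}[\mathsf{T}(e)] = 1$ and $(1 \times \mathsf{T}(e))\mathsf{D}[\mathsf{T}(e)] = \oplus\, \mathsf{T}(e)$, where now $\langle 0,1 \rangle$, $1$, and $\oplus$ are all taken on $A \times A$. The governing idea is that although $\mathsf{D}$ does not commute with $\mathsf{T}$ on the nose, Lemma \ref{tanlem}(vii) gives $\mathsf{D}[\mathsf{T}(e)] = c\,\mathsf{T}(\mathsf{D}[e])$, so each computation can be reorganized by absorbing the interchange map $c$ into $\mathsf{T}$ of suitable maps and then collapsing everything by functoriality of $\mathsf{T}$ onto the two axioms already satisfied by $e$. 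I expect the only genuine obstacle to be spotting the correct ``naturality of $c$'' identities that trigger this collapse; once they are in hand, no direct manipulation of the second-order derivative $\mathsf{D}^2[e]$ is needed. A brute-force route through explicit formulas for $\mathsf{D}^2[e]$ would also work but is considerably messier.

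For the first axiom I would start from $\langle 0,1 \rangle \mathsf{D}[\mathsf{T}(e)] = \langle 0,1 \rangle\, c\, \mathsf{T}(\mathsf{D}[e])$ via Lemma \ref{tanlem}(vii). The key observation is that, since $\mathsf{T}(0) = 0$ and $\mathsf{T}(1) = 1$ (Lemma \ref{tanlem}(vi)), Lemma \ref{tanlem}(iv) applied to $A \xrightarrow{\langle 0,1 \rangle} A \times A$ yields $\mathsf{T}(\langle 0,1 \rangle) = \langle 0,1 \rangle\, c$. Substituting this and using functoriality gives $\mathsf{T}(\langle 0,1 \rangle)\mathsf{T}(\mathsf{D}[e]) = \mathsf{T}\big(\langle 0,1 \rangle \mathsf{D}[e]\big)$, and the left-hand axiom of (\ref{dem}) for $e$ states $\langle 0,1 \rangle \mathsf{D}[e] = 1$, so this equals $\mathsf{T}(1) = 1$, as required.

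For the second axiom I would again begin with $(1 \times \mathsf{T}(e))\mathsf{D}[\mathsf{T}(e)] = (1 \times \mathsf{T}(e))\, c\, \mathsf{T}(\mathsf{D}[e])$. Here the needed naturality identity is $(1 \times \mathsf{T}(e))\, c = c\, \mathsf{T}(1 \times e)$, which I would derive from the second equation of Lemma \ref{tanlem}(v), $\mathsf{T}(f \times g)c = c(\mathsf{T}(f) \times \mathsf{T}(g))$, taking $f = 1$ and $g = e$ and using $cc = 1$ together with $\mathsf{T}(1) = 1$. Substituting and applying functoriality gives $c\, \mathsf{T}(1 \times e)\mathsf{T}(\mathsf{D}[e]) = c\, \mathsf{T}\big((1 \times e)\mathsf{D}[e]\big)$; the right-hand axiom of (\ref{dem}) for $e$ rewrites $(1 \times e)\mathsf{D}[e]$ as $\oplus e$, so this becomes $c\, \mathsf{T}(\oplus e) = c\, \mathsf{T}(\oplus)\mathsf{T}(e)$. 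Finally Lemma \ref{opluslem}(iii), $c\, \mathsf{T}(\oplus) = \oplus$, identifies this with $\oplus\, \mathsf{T}(e)$ on $A \times A$, completing the verification and showing that $\mathsf{T}(e)$ is a differential exponential map.
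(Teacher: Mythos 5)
Your proposal is correct and follows essentially the same route as the paper's own proof: rewrite $\mathsf{D}[\mathsf{T}(e)]$ as $c\,\mathsf{T}(\mathsf{D}[e])$, absorb the interchange map $c$ into $\mathsf{T}$ via the identities $\langle 0,1\rangle c = \mathsf{T}(\langle 0,1\rangle)$ and $(1 \times \mathsf{T}(e))c = c\,\mathsf{T}(1 \times e)$, collapse by functoriality onto the two axioms for $e$, and finish with $c\,\mathsf{T}(\oplus) = \oplus$. The only difference is cosmetic: you spell out how those interchange identities follow from Lemma \ref{tanlem}(iv)--(vi) and $cc=1$, where the paper simply cites Lemma \ref{tanlem}.
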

\begin{proof} We first show that $\langle 0, 1 \rangle \mathsf{D}\left[\mathsf{T}(e) \right] = 1$: 
 \begin{align*}
\langle 0, 1 \rangle \mathsf{D}\left[\mathsf{T}(e) \right] &=~ \langle 0, 1 \rangle c \mathsf{T}(\mathsf{D}[e]) \tag{Lemma \ref{tanlem}} \\
&=~\mathsf{T}(\langle 0,1 \rangle) \mathsf{T}(\mathsf{D}[e]) \tag{Lemma \ref{tanlem}} \\
&=~\mathsf{T} \left(\langle 0,1 \rangle \mathsf{D}[e] \right) \tag{$\mathsf{T}$ is a functor} \\
&=~\mathsf{T}(1) \tag{\ref{dem}} \\
&=~ 1 \tag{$\mathsf{T}$ is a functor}
\end{align*}
Next we show that $(1 \times \mathsf{T}(e)) \mathsf{D}\left[\mathsf{T}(e) \right] = \oplus \mathsf{T}(e)$: 
 \begin{align*}
(1 \times \mathsf{T}(e)) \mathsf{D}\left[\mathsf{T}(e) \right] &=~ (1 \times \mathsf{T}(e)) c \mathsf{T}(\mathsf{D}[e]) \tag{Lemma \ref{tanlem}} \\
&=~ c \mathsf{T}(1 \times e) \mathsf{T}(\mathsf{D}[e]) \tag{Lemma \ref{tanlem}} \\
&=~c \mathsf{T} \left( (1 \times e) \mathsf{D}[e] \right) \tag{$\mathsf{T}$ is a functor} \\
&=~c \mathsf{T} \left( \oplus e \right) \tag{\ref{dem}} \\
&=~c \mathsf{T}(\oplus) \mathsf{T}(e) \tag{$\mathsf{T}$ is a functor} \\
&=~ \oplus \mathsf{T}(e) \tag{Lemma \ref{tanlem}}
\end{align*}
So we conclude that $\mathsf{T}(e)$ is a differential exponential map. 
\end{proof} 

 \begin{proposition}\label{DEMtangent} For a Cartesian differential category $\mathbb{X}$, $\mathsf{DEM}[\mathbb{X}]$ is a Cartesian tangent category where the tangent functor $\mathsf{T}: \mathsf{DEM}[\mathbb{X}] \to \mathsf{DEM}[\mathbb{X}]$ is defined on objects as $\mathsf{T}(A, e) := (A \times A, \mathsf{T}(e))$ and on maps as $\mathsf{T}(f) = f \times f$, and where the remaining tangent structure is the same as for $\mathsf{LIN}[\mathbb{X}]$ (which is the same as for $\mathbb{X}$ and can be found in {\cite[Proposition 4.7]{cockett2014differential}}). 
 \end{proposition}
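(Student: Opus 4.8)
The plan is to lift the Cartesian tangent structure of $\mathsf{LIN}[\mathbb{X}]$ (Corollary \ref{lintan}) along the forgetful functor $\mathsf{DEM}[\mathbb{X}] \xrightarrow{\mathsf{U}} \mathsf{LIN}[\mathbb{X}]$, exploiting that $\mathsf{U}$ is faithful, is the identity on underlying objects and maps, and creates all limits (Lemma \ref{reflectlem}). First I would check that $\mathsf{T}$ is a well-defined endofunctor on $\mathsf{DEM}[\mathbb{X}]$. On objects, $\mathsf{T}(A,e) = (A\times A, \mathsf{T}(e))$ is legitimate because $\mathsf{T}(e)$ is again a differential exponential map by Lemma \ref{exptan}. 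On morphisms, given $f\colon (A,e)\to(B,e^\prime)$, which is a linear map satisfying $ef=fe^\prime$, applying the functor $\mathsf{T}$ of $\mathbb{X}$ to this equation yields $\mathsf{T}(e)\mathsf{T}(f)=\mathsf{T}(f)\mathsf{T}(e^\prime)$; since $f$ is linear, $\mathsf{T}(f)=f\times f$ by Lemma \ref{tanlem}, and $f\times f$ is linear by Lemma \ref{linlem}, so $f\times f$ is indeed a morphism $\mathsf{T}(A,e)\to\mathsf{T}(B,e^\prime)$ in $\mathsf{DEM}[\mathbb{X}]$. Functoriality is then inherited from $\mathbb{X}$ via faithfulness of $\mathsf{U}$.

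The heart of the argument is to show that each structural natural transformation of the tangent category $\mathbb{X}$ (the projection $\pi_0$, the zero section $\langle 1,0\rangle$, the tangent addition, the vertical lift, and the canonical flip $c$) lifts to a morphism in $\mathsf{DEM}[\mathbb{X}]$ at every object $(A,e)$. A component $\tau_A$ going between iterated tangent bundles qualifies as a $\mathsf{DEM}$-morphism precisely when it is linear and it commutes with the induced exponential maps, i.e. $\mathsf{T}^n(e)\,\tau_A = \tau_A\,\mathsf{T}^m(e)$ for the appropriate iterates. Linearity holds because all of these maps are built from identities, projections, sums and the interchange map $c$, which are linear by Lemma \ref{linlem}. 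The crucial observation for the second condition is that these transformations are natural with respect to \emph{all} maps of $\mathbb{X}$, not merely the linear ones, since they constitute the tangent structure of $\mathbb{X}$ itself; hence their naturality squares may be instantiated at the (generally non-linear) map $e$, which is exactly the required commutation. For instance, naturality of the projection at $e$ reads $\mathsf{T}(e)\pi_0=\pi_0 e$ (which also follows directly from $\mathsf{T}(e)=\langle \pi_0 e,\mathsf{D}[e]\rangle$), and naturality of the zero section at $e$ reads $e\langle 1,0\rangle=\langle 1,0\rangle\mathsf{T}(e)$, which is Lemma \ref{tanlem}(ii). The same naturality, instantiated at $e$, handles the vertical lift and the canonical flip, since $\mathsf{T}^2(A,e)=(\mathsf{T}^2(A),\mathsf{T}^2(e))$.

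With the functor and all structural transformations lifted, the tangent category axioms, which are equations between composites of these maps, hold automatically in $\mathsf{DEM}[\mathbb{X}]$ because $\mathsf{U}$ is faithful and they already hold in $\mathsf{LIN}[\mathbb{X}]$. The two limit-flavoured requirements are supplied by Lemma \ref{reflectlem}: the tangent pullbacks needed to state the fibrewise addition, together with their induced exponential maps, and the universality pullback of the vertical lift, are created by $\mathsf{U}$, so they exist in $\mathsf{DEM}[\mathbb{X}]$ and their universal properties are inherited. The Cartesian requirement is handled by Corollary \ref{corprod}, and compatibility of $\mathsf{T}$ with the products follows from the corresponding compatibility in $\mathsf{LIN}[\mathbb{X}]$ again by faithfulness of $\mathsf{U}$. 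I expect the main obstacle to be conceptual rather than computational: recognising that although the morphisms of $\mathsf{DEM}[\mathbb{X}]$ are restricted to linear maps, the non-linear map $e$ nevertheless commutes with every structural transformation because those transformations are natural over the whole tangent category $\mathbb{X}$. Once this point is isolated, the remaining verifications are routine and are delegated to $\mathsf{U}$ creating limits and being faithful.
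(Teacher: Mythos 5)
Your proposal is correct and follows essentially the same route as the paper's proof: well-definedness of $\mathsf{T}$ via Lemma \ref{exptan}, lifting the structural transformations by observing they are linear and that their naturality in $\mathbb{X}$ (instantiated at the non-linear map $e$) gives the required commutation, existence of the needed limits via Lemma \ref{reflectlem}, and inheritance of the equational axioms and Cartesian structure from $\mathsf{LIN}[\mathbb{X}]$ and Corollary \ref{corprod}. The paper states these steps more tersely, but the key conceptual point you isolate --- naturality over all of $\mathbb{X}$, not just linear maps --- is exactly the paper's argument.
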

 \begin{proof} The tangent functor $\mathsf{T}$ is well defined by Lemma \ref{exptan}. Since all the maps of the tangent structure of $\mathbb{X}$ are linear, it follows that by their respective naturality with the tangent functor of $\mathbb{X}$, they are also maps in $\mathsf{DEM}[\mathbb{X}]$ which are natural for its tangent functor. The existence of the necessary limits for tangent structure in $\mathsf{DEM}[\mathbb{X}]$ will follow from Corollary \ref{lintan} and Lemma \ref{reflectlem}. And lastly, the required equalities for tangent structure will hold since they hold in $\mathsf{LIN}[\mathbb{X}]$. So we conclude that $\mathsf{DEM}[\mathbb{X}]$ is a tangent category. Furthermore, since $\mathbb{X}$ is a Cartesian tangent category, it follows that $\mathsf{DEM}[\mathbb{X}]$ is also a Cartesian tangent category. \end{proof} 

It may be tempting to think that $\mathsf{DEM}[\mathbb{X}]$ is also a Cartesian differential category, but this is not the case. Indeed note that even if $(A, e) \xrightarrow{f} (B, e^\prime)$ and $(A, e) \xrightarrow{g} (A, e^\prime)$ are maps in $\mathsf{DEM}[\mathbb{X}]$, their sum $f +g$ is not (in general) a map in $\mathsf{DEM}[\mathbb{X}]$ since it is not necessarily the case that $(f+g)e$ equals $e^\prime(f+g)$. This implies that $\mathsf{DEM}[\mathbb{X}]$ is not a Cartesian left additive category, and so in particular not a Cartesian differential category. 

\begin{example}\label{expex} \normalfont Here are now examples of differential exponential maps in the Cartesian differential category $\mathsf{SMOOTH}$ (as defined in Example \ref{smoothex}).  
\begin{enumerate}[{\em (i)}]
\item Consider the exponential function $\mathbb{R} \xrightarrow{e^x} \mathbb{R}$, $x \mapsto e^x$. Since $\frac{\partial e^x}{\partial x}(a) = e^a$, we have that:
\[\mathsf{D}[e](a,b) = \frac{\partial e^x}{\partial x}(a)b = e^a b\]
The exponential function satisfies the left diagram of (\ref{dem}) since $e^0=1$:
\[ \mathsf{D}[e](0,a) = e^0 a = a \]
while the right diagram of (\ref{dem}) is also satisfied since $e^a e^b = e^{a+b}$: 
\[\mathsf{D}[e^x](a,e^b) = e^a e^b = e^{a+b} \]
Then the exponential function $\mathbb{R} \xrightarrow{e^x} \mathbb{R}$ is a differential exponential map. 
\item Applying Corollary \ref{corprod}.ii to the exponential function $\mathbb{R} \xrightarrow{e^x} \mathbb{R}$, the point-wise exponential functions $\mathbb{R}^{n} \xrightarrow{e^x \times \hdots \times e^x} \mathbb{R}^n$:
\[(x_1, \hdots, x_n) \mapsto (e^{x_1}, \hdots, e^{x_n})\]
are differential exponential maps in $\mathsf{SMOOTH}$. 
\item Applying Lemma \ref{exptan} to the exponential function $\mathbb{R} \xrightarrow{e^x} \mathbb{R}$, the smooth function ${\mathbb{R}^{2} \xrightarrow{\mathsf{T}(e^x)} \mathbb{R}^2}$, which is worked out to be: 
\[(x,y) \mapsto (e^x, e^xy)\]
is a differential exponential map. To better understand $\mathsf{T}(e^x)$, consider the ring of dual numbers $\mathbb{R}[\varepsilon]$ \cite[Section 1.1.3]{rosenfeld2013geometry}:
 \[ \mathbb{R}[\varepsilon] = \lbrace x + y\varepsilon \vert ~ x,y \in \mathbb{R}, \varepsilon^2=0 \rbrace\] 
As explained in \cite[Section 1.1.5]{rosenfeld2013geometry}, the dual number exponential function is $e^{x+y\varepsilon} = e^x + e^x y \varepsilon$. It may be useful to the reader to work out this example using the power series definition of the exponential function: 
\[ e^{x+y\varepsilon} = \sum \limits^{\infty}_{n =0} \frac{(x + y \varepsilon)^n}{n!} \]
Note that by the binomial theorem and power series multiplication, one can still derive that $e^{x+y\varepsilon} = e^xe^{y\varepsilon}$. Therefore, it remains to compute $e^{y\varepsilon}$. Since $\varepsilon^{n} =0$ for all $n \geq 2$, we obtain:
\[ e^{y\varepsilon} = \sum \limits^{\infty}_{n =0} \frac{(y \varepsilon)^n}{n!} = \sum \limits^{\infty}_{n =0} \frac{y^n \varepsilon^n}{n!} = 1 + y \varepsilon \]
So $e^{y\varepsilon} = 1 + y \varepsilon$. Therefore,
\[ e^{x+y\varepsilon} = e^xe^{y\varepsilon} = e^x(1+y\varepsilon) = e^x + e^x y \varepsilon \]
Writing dual numbers $x+y\varepsilon$ instead as $(x,y)$, it becomes clear that $\mathsf{T}(e^x)$ is the real smooth function associated to the dual numbers exponential function. This relation was to be expected since tangent categories are closely related to dual numbers and Weil algebras \cite{leung2017classifying}. Furthermore, note that in dual number notation, $\mathsf{D}[\mathsf{T}(e^x)]\left( a+b\varepsilon , c+d\varepsilon \right) = e^{a+b\varepsilon}(c+d\varepsilon)$. 
\item Let $\mathbb{C}$ be the field of complex numbers: 
 \[ \mathbb{C} = \lbrace x + iy \vert ~ x,y \in \mathbb{R}, i^2=-1 \rbrace\] 
The complex exponential function is $e^{x+iy} = e^x\cos(y) + i e^x\sin(y)$, where $\cos$ and $\sin$ are the trigonometric cosine and sine functions respectively. The complex exponential function is of course derived from the power series definition:
\[ e^{x+iy} = \sum \limits^{\infty}_{n =0} \frac{(x + iy)^n}{n!} \]
and then simplifying by using that $e^{x+iy}= e^x e^{iy}$, $i^2=-1$, and the Taylor series expansions of $\sin(x)$ and $\cos(x)$. The complex exponential function can be expressed as the smooth real function $\mathbb{R}^2 \xrightarrow{\epsilon} \mathbb{R}^2$, 
\[(x,y) \mapsto (e^x \cos(y), e^x \sin(y))\]
By the Leibniz rule and the derivative identities for both the exponential and trigonometric functions, we have that:
\begin{align*}
\mathsf{D}[\epsilon]\left( (a,b), (c,d) \right) = \left( e^a\cos(b)c - e^a \sin(b)d, e^a\sin(b)c + e^a \cos(b)d\right)
\end{align*}
Or using complex number notation: $\mathsf{D}[\epsilon]\left( a+ib , c+id \right) = e^{a+ib}(c+id)$. It is well known that the complex exponential function satisfies the same basic properties as the real exponential function, that is, $e^0 = 1$ and $e^{z+w} = e^z e^w$ for $z,w \in \mathbb{C}$. As such, we can easily compute that (using complex number notation): 
\begin{align*}
\mathsf{D}[\epsilon]\left( 0, a+ib \right) &= e^{0}(a+ib) = a+ib \\
\mathsf{D}[\epsilon]\left( a+ib , e^{c+id} \right) &= e^{a+ib}e^{c+id} = e^{(a+ib) + (c + id)} 
\end{align*}
Therefore, it follows that $\mathbb{R}^2 \xrightarrow{\epsilon} \mathbb{R}^2$ is a differential exponential map. 
\item Let $\mathbb{C}^\prime$ be the ring of split-complex numbers \cite[Section 1.1.2]{rosenfeld2013geometry} (also known as hyperbolic twocomplex numbers \cite{olariu2002complex}):
 \[ \mathbb{C}^\prime = \lbrace x + jy \vert ~ x,y \in \mathbb{R}, j^2= 1 \rbrace\] 
The split complex exponential function \cite[Section 1.3]{olariu2002complex} is instead defined using the hyperbolic cosine and sine functions  $\cosh$ and $\sinh$, that is, $e^{x+jy} = e^x\cosh(y) + j e^x\sinh(y)$. As in the previous example, the split complex exponential function is derived from its power series definition: 
\[ e^{x+jy} = \sum \limits^{\infty}_{n =0} \frac{(x + jy)^n}{n!} \]
and then simplifying by using that $e^{x+jy}= e^x e^{jy}$, $j^2=1$, and the Taylor series expansions of $\sinh(x)$ and $\cosh(x)$, as done in \cite[Section 1.3]{olariu2002complex}. 
Similar to the complex exponential function, the split complex exponential function can be expressed as the smooth real function $\mathbb{R}^2 \xrightarrow{\epsilon^\prime} \mathbb{R}^2$:
\[(x,y) \mapsto (e^x \cosh(y), e^x \sinh(y))\]
By the Leibniz rule and the derivative identities for both the exponential and hyperbolic functions, we have that:
\begin{align*}
\mathsf{D}[\epsilon^\prime]\left( (a,b), (c,d) \right) = \left( e^a\cosh(b)c + e^a \sinh(b)d, e^a\sinh(b)c + e^a \cosh(b)d\right)
\end{align*}
Or using split complex numbers: $\mathsf{D}[\epsilon]\left( a+jb , c+jd \right) = e^{a+jb}(c+jd)$. As explained in \cite{olariu2002complex}, the split complex exponential function satisfies that $e^0=1$ and $e^{u+v} = e^u e^v$ for $u,v \in \mathbb{C}^\prime$. Then we can compute that (using split complex number notation): 
\begin{align*}
\mathsf{D}[\epsilon^\prime]\left( 0, a+jb \right) = \mathsf{D}[\epsilon]\left( 0, a+jb \right) = e^{0}(a+jb) = a+jb \\
\mathsf{D}[\epsilon^\prime]\left( a+jb , e^{c+jd} \right) = e^{a+jb}e^{c+jd} = e^{(a+jb) + (c + jd)} 
\end{align*}
Therefore, $\mathbb{R}^2 \xrightarrow{\epsilon^\prime} \mathbb{R}^2$ is a differential exponential map. \end{enumerate}
Note that Example \ref{expex}.(iii), (iv), and (v) are not the product of differential exponential maps in the sense of Corollary \ref{corprod}.ii. That said, one could take the product of any of these differential exponential maps to obtain a multitude of other examples. 
\end{example}

\begin{example}\label{curveexp1} \normalfont Another example of a differential exponential map can be found in \cite[Definition 5.20]{cockett2019differential}. Briefly, in a Cartesian tangent category, a differential curve object \cite[Definition 5.14]{cockett2019differential} (viewed as an object in the Cartesian differential category of differential objects \cite{cockett2014differential}) admits a canonical differential exponential map which arises as the solution to the well known associated differential equations. As a particular example, in the tangent category of real smooth manifolds, the differential curve object is $\mathbb{R}$ and its induced differential exponential map is the canonical exponential function $e^x$. In Section \ref{DYNsec} we will discuss the link between differential exponential maps and differential equations, and in particular, we will see in Proposition \ref{uniqueprop} how a differential exponential map arises as the solution of a certain initial value problem. 
\end{example}

Corollary \ref{corprod}.i tells us that the identity map of the terminal object is a differential exponential map. So we conclude this section with the observation that a differential exponential map is linear if and only if it is the identity map of a terminal object. 

\begin{lemma}\label{noexp} A differential exponential map $A \xrightarrow{e} A$ is reduced (i.e. $0e=0$) if and only if $A$ is a terminal object. Therefore, a differential exponential map $A \xrightarrow{e} A$ is additive or linear if and only if $A$ is a terminal object. 
\end{lemma}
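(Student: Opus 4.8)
The statement is an iff, and I would handle each direction via the characterization that an object $A$ is terminal precisely when its identity is the zero endomorphism, $1_A = 0$. Indeed, if $1_A = 0$ then any $f \colon B \to A$ satisfies $f = f 1_A = f 0 = 0$ by the left additivity law $f0 = 0$ of (\ref{add}), so $\mathbb{X}(B,A)$ is a singleton for every $B$ and $A$ is terminal; the converse is immediate. The easy direction of the lemma is then that a terminal $A$ forces $0e = 0$: if $A$ is terminal, $\mathbb{X}(A,A)$ is a singleton, so any two parallel endomorphisms of $A$ coincide, and in particular $0e = 0$.

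For the substantive direction I would assume $0e = 0$ and prove $A$ is terminal by establishing $1_A = 0$. The plan is first to show that reducedness collapses $e$ itself to the zero map, and then to feed this back into the left axiom of (\ref{dem}). To obtain $e = 0$, I would precompose the right diagram of (\ref{dem}), namely $(1 \times e)\mathsf{D}[e] = \oplus e$, with $\langle 1, 0 \rangle$. On the left, $\langle 1, 0 \rangle (1 \times e) = \langle 1, 0e \rangle = \langle 1, 0 \rangle$ using the hypothesis $0e = 0$, and then $\langle 1, 0 \rangle \mathsf{D}[e] = 0$ by the second equation of \textbf{[CD.2]}. On the right, left additivity gives $\langle 1, 0 \rangle \oplus = \langle 1, 0 \rangle \pi_0 + \langle 1, 0 \rangle \pi_1 = 1 + 0 = 1$, whence $\langle 1, 0 \rangle \oplus e = e$. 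Equating the two sides yields $e = 0$.

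Once $e = 0$, axiom \textbf{[CD.1]} gives $\mathsf{D}[e] = \mathsf{D}[0] = 0$, and the left diagram of (\ref{dem}) then reads $1 = \langle 0, 1 \rangle \mathsf{D}[e] = \langle 0, 1 \rangle 0 = 0$, i.e. $1_A = 0$, so $A$ is terminal. The step I expect to be the crux is recognizing that the right target is exactly $1_A = 0$, and that both exponential axioms must be used in tandem: the right axiom of (\ref{dem}) to kill $e$, and the left axiom to then force $1 = 0$. This mirrors the elementwise intuition that $e^0 = 1$ and $e^0 = 0$ cannot both hold unless the induced rig is trivial.

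For the final clause I would use that every additive map is reduced by the $0k = 0$ clause of (\ref{addmap}), and that every linear map is additive by Lemma \ref{linlem}.(i); hence a linear or additive differential exponential map is reduced, so $A$ is terminal by the first part. Conversely, if $A$ is terminal then $e = 1_A = 0$, which is both additive and linear, since zero maps and identity maps are linear by Lemma \ref{linlem}. This closes the equivalence.
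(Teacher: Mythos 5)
Your proof is correct and follows essentially the same route as the paper: kill $e$ by precomposing the right axiom of (\ref{dem}) with $\langle 1,0\rangle$ (using reducedness and \textbf{[CD.2]}), then use \textbf{[CD.1]} and the left axiom of (\ref{dem}) to get $1_A = 0$, with the same handling of the additive/linear clause. The only difference is cosmetic: you spell out the characterization of terminal objects via $1_A = 0$, which the paper leaves as an exercise to the reader.
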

\begin{proof} Suppose that $A \xrightarrow{e} A$ is a differential exponential map which satisfies $0e=0$. Then we have that:  
 \begin{align*}
e &=~\langle 1, 0 \rangle \oplus e \tag{Lemma \ref{opluslem}} \\
&=~ \langle 1, 0 \rangle (1 \times e) \mathsf{D}[e] \tag{\ref{dem}} \\
&=~\langle 1, 0 e \rangle \mathsf{D}[e] \\
&=~\langle 1, 0 \rangle \mathsf{D}[e] \tag{$e$ reduced}\\
&=~ 0 \tag*{\bf[CD.2]}
\end{align*}
So $e=0$. Now note that in a Cartesian left additive category, $A$ is a terminal object if and only if $1_A = 0$ (we leave this to the reader to check for themselves). Then we have that: 
 \begin{align*}
1_A &=~\langle 0, 1 \rangle \mathsf{D}[e] \tag{\ref{dem}} \\
 &=~ \langle 0, 1 \rangle \mathsf{D}[0] \tag{$e=0$} \\
 &=~ 0 \tag*{\bf[CD.1]}
\end{align*}
So $1_A = 0$, and so $A$ is a terminal object. Conversely, if $A$ is a terminal object, then we must have that $e = 1_A = 0$, and so clearly $e$ is reduced. For the second statement, note that by definition every additive map is reduced, and since linear maps are additive (Lemma \ref{linlem}), they are also reduced. So if $e$ is additive or linear, it is reduced and therefore $A$ is a terminal object. Conversely, if $A$ is a terminal object, then $e$ must be the identity, and identity maps are always linear and additive (Lemma \ref{linlem}). 
\end{proof} 

\begin{example}\label{exnoexp} \normalfont Every category with finite biproducts is a Cartesian differential category with differential combinator defined as $\mathsf{D}[f] = \pi_1 f$, which means that every map is linear. Therefore, in this case, the only differential exponential maps are the identity maps on terminal objects. \end{example}

Note that for a Cartesian differential category $\mathbb{X}$, Lemma \ref{noexp} also implies that the only differential object \cite{cockett2014differential} in the Cartesian tangent category $\mathsf{DEM}[\mathbb{X}]$ is the terminal object. 

\section{Differential Exponential Rigs}\label{dessec}

In this section, we introduce \emph{differential exponential rigs}, which provide an equivalent alternative characterization of differential exponential maps. We will show that every differential exponential map induces a differential exponential rig (Proposition \ref{prop2}) and that conversely, every differential exponential rig induces a differential exponential map (Proposition \ref{prop1}). We will also show that for a Cartesian differential category, its category of differential exponential maps is isomorphic to its category of differential exponential rigs (Theorem \ref{isothm}). We begin by reviewing differential rigs, which are rigs in a Cartesian differential category whose multiplication is bilinear in the differential sense.

\begin{definition} A \textbf{differential rig} in a Cartesian differential category is a triple $(A, \odot, u)$ consisting of an object $A$ and two maps $A \times A \xrightarrow{\odot} A$ and $\top \xrightarrow{u} A$ such that $(A, \odot, u)$ is a commutative monoid and $A \times A \xrightarrow{\odot} A$ is bilinear, that is, the following equality holds: 
 \begin{equation}\label{odotbilin}\begin{gathered} \mathsf{D}[\odot] = (\pi_0 \times \pi_1) \odot + (\pi_1 \times \pi_0) \odot \end{gathered}\end{equation}
\end{definition} 

We should justify the use of the term rig in differential rig. Indeed, the term (commutative) rig should imply that there are two (commutative) monoid structures that satisfy the expected distributivity axioms. But we know that in a Cartesian differential category, as discussed in Lemma \ref{opluslem}, every object $A$ already comes equipped with a commutative monoid structure $(A, \oplus, 0)$. So every differential rig $(A, \odot, u)$ does come with two commutative monoid structures. The required distributivity axioms are captured by the fact that $\odot$ is bilinear, and therefore additive in each argument -- which is an equivalent way of saying that $\oplus$ and $\odot$ distribute over one another in the rig sense. 

\begin{lemma} If $(A, \odot, u)$ is a differential rig, then $(A, \odot, u, \oplus, 0)$ is a commutative rig, that is, the following diagrams commute:
 \begin{equation}\label{distributeeq}\begin{gathered} \xymatrixcolsep{2.5pc}\xymatrixrowsep{1.5pc}\xymatrix{
 A \ar[d]_-{\langle 0, 1 \rangle} \ar[dr]^-{0} \ar[r]^-{\langle 1, 0 \rangle} & A \times A \ar[d]^-{\odot} & A \times (A \times A) \ar[d]_-{\left\langle 1 \times \pi_0, 1 \times \pi_1 \right\rangle} \ar[r]^-{1 \times \oplus} & (A \times A) \ar[dd]^-{\odot} \\ 
 A \times A \ar[r]_-{\odot} & A & (A \times A) \times (A \times A) \ar[d]_-{\odot \times \odot} \\
 & & A \times A \ar[r]_-{\oplus} & A } \\
 \xymatrixcolsep{2.5pc}\xymatrixrowsep{1.5pc}\xymatrix{
 (A \times A) \times A \ar[r]^-{\oplus \times 1} \ar[d]_-{\left\langle \pi_0 \times 1, \pi_1 \times 1 \right\rangle} & A \times A \ar[dd]^-{\odot} \\ 
 (A \times A) \times (A \times A) \ar[d]_-{\odot \times \odot} \\
 A \times A \ar[r]_-{\oplus} & A }\end{gathered}\end{equation} 
where $\oplus$ is defined as in Lemma \ref{opluslem}. 
\end{lemma}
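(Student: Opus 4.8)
The plan is to read the three diagrams of (\ref{distributeeq}) concretely: the first asserts the two annihilation identities $\langle 1, 0 \rangle \odot = 0$ and $\langle 0, 1 \rangle \odot = 0$ (``$a \odot 0 = 0$'' and ``$0 \odot a = 0$''), while the second and third assert left and right distributivity of $\odot$ over $\oplus = \pi_0 + \pi_1$, i.e.\ that $\odot$ is additive in each of its two arguments. Since $\odot$ is commutative by (\ref{ostarmonoideq}), the right-hand distributivity diagram follows formally from the left-hand one by inserting the symmetry $\langle \pi_1, \pi_0 \rangle$, and the two annihilation identities are likewise interchanged by commutativity; so it suffices to prove one annihilation identity and left distributivity. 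The one genuinely delicate point is annihilation: because the additive monoids $\mathbb{X}(A,B)$ need not be cancellative, one cannot argue ``$a \odot 0 = a \odot 0 + a \odot 0 \Rightarrow a \odot 0 = 0$'' as one would over a ring, so annihilation must be extracted from the differential structure rather than from distributivity.

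For annihilation I would differentiate the unit law. The commutative monoid axioms (\ref{ostarmonoideq}) give $\langle 0u, 1 \rangle \odot = 1$, and applying $\mathsf{D}$ to both sides yields $\mathsf{D}[\langle 0u, 1 \rangle \odot] = \mathsf{D}[1] = \pi_1$ by \textbf{[CD.3]}. Expanding the left side with the chain rule \textbf{[CD.5]} and \textbf{[CD.4]}, and using that the constant map $0u$ has $\mathsf{D}[0u] = 0$ (which follows from \textbf{[CD.1]} and the reducedness clause $\langle 1, 0 \rangle \mathsf{D}[u] = 0$ of \textbf{[CD.2]}, since $\langle g, 0\rangle = g\langle 1,0\rangle$), produces $\langle \pi_0 \langle 0u,1\rangle, \langle 0, \pi_1 \rangle \rangle \mathsf{D}[\odot] = \pi_1$. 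Substituting the bilinearity formula (\ref{odotbilin}) for $\mathsf{D}[\odot]$ then evaluates the left side, in point form, to $u \odot b + a \odot 0 = b$. Rewriting $u \odot b = b$ by the unit law and restricting to $b = 0$ (precomposing with $\langle 1, 0 \rangle$) collapses this to $a \odot 0 = 0$, which is exactly $\langle 1, 0 \rangle \odot = 0$; the companion identity $\langle 0, 1 \rangle \odot = 0$ then follows by commutativity.

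With annihilation in hand, left distributivity is straightforward. The key observation is that, using (\ref{odotbilin}) together with $0 \odot 0 = 0$ (a special case of annihilation), one has $\langle \langle \pi_0, 0\rangle, \langle 0, \pi_1 \rangle \rangle \mathsf{D}[\odot] = \odot$; that is, $\mathsf{D}[\odot]$ with the point placed on the first axis and the direction on the second axis recovers $\odot$ with no leftover constant term. This exhibits $a \odot b$ as $\mathsf{D}[\odot]$ composed with the additive (indeed linear, by Lemma \ref{linlem}) insertion $\langle 0, 1 \rangle$ of $b$ into the direction slot; since $\mathsf{D}[\odot]$ is additive in its direction argument by the first clause of \textbf{[CD.2]}, feeding it $(0,b) + (0,c) = (0, b \oplus c)$ splits the output and yields $a \odot (b \oplus c) = (a \odot b) \oplus (a \odot c)$, the second diagram of (\ref{distributeeq}). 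The third diagram then follows by precomposing this identity with the symmetry map and invoking commutativity of $\odot$.

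I expect the annihilation step to be the main obstacle, both because it is where the lack of additive cancellation bites and because it is the one identity that cannot be reached by purely formal monoid-and-commutativity manipulations; the device of differentiating the unit law and using the bilinearity formula to turn $\mathsf{D}[\odot]$ back into $\odot$ is what makes it go through. The remaining bookkeeping—verifying $\mathsf{D}[0u] = 0$, matching the pairing and projection maps in the chain-rule expansion, and checking the symmetry precomposition for right distributivity—is routine given Lemma \ref{linlem} and the axioms.
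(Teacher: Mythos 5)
Your proof is correct, but note that the paper itself gives no proof of this lemma: it is stated as an immediate consequence of the paragraph preceding it, namely that bilinearity of $\odot$ in the sense of (\ref{odotbilin}) means $\odot$ is additive in each argument, and additivity in each argument (which by definition includes preservation of zero) is exactly the distributivity and annihilation laws. Your verification fills in this omitted argument, and your diagnosis of where the real content lies is accurate: the two distributivity squares do follow formally from the first clause of \textbf{[CD.2]} together with the identity $\langle\langle \pi_0, 0\rangle, \langle 0, \pi_1\rangle\rangle \mathsf{D}[\odot] = \odot$, whereas the annihilation identity cannot be extracted from bilinearity by monoid manipulations alone, since the hom-monoids need not be cancellative, and the unit law must enter. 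Your route to annihilation -- differentiating the unit law $\langle 0u, 1\rangle \odot = 1$ via \textbf{[CD.5]}, \textbf{[CD.4]}, \textbf{[CD.3]} and substituting (\ref{odotbilin}) -- is valid, though slightly more roundabout than necessary: one can instead apply the second clause of \textbf{[CD.2]}, $\langle 1, 0\rangle \mathsf{D}[\odot] = 0$, directly to (\ref{odotbilin}), which gives $\langle \pi_0, 0 \rangle \odot + \langle \pi_1, 0\rangle \odot = 0$; precomposing with $\langle 1, 0u \rangle$ and using $\langle 0u, 0 \rangle \odot = 0$ (precompose the unit law with the zero endomorphism) then yields $\langle 1, 0 \rangle\odot = 0$ with no appeal to the chain rule or to the vanishing of derivatives of constant maps. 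Both arguments rest on the same essential ingredients -- bilinearity, \textbf{[CD.2]}, the unit law, and left additivity to distribute precomposition over sums -- so the difference is one of economy rather than substance, and your write-up correctly highlights the subtlety that the paper's one-line justification glosses over.
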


A differential exponential rig is a differential rig equipped with an endomorphism which satisfies analogues of the three essential properties of the classical exponential function. This endomorphism will, of course, turn out to be a differential exponential map. 

\begin{definition} A \textbf{differential exponential rig} in a Cartesian differential category is a quadruple $(A, \odot, u, e)$ consisting of a differential rig $(A, \odot, u)$ and a map $A \xrightarrow{e} A$, such that the following diagrams commute:
 \begin{equation}\label{des}\begin{gathered} \xymatrixcolsep{2.5pc}\xymatrix{A \times A \ar[dr]_-{\mathsf{D}[e]} \ar[r]^-{e \times 1} & A \times A \ar[d]^-{\odot} & \top \ar[dr]_-{u} \ar[r]^-{0} & A \ar[d]^-{e} & A \times A \ar[d]_-{e \times e} \ar[r]^-{\oplus} & A \ar[d]^-{e} \\
 & A && A & A \times A \ar[r]_-{\odot} & A } \end{gathered}\end{equation}
 where $\oplus$ is defined as in Lemma \ref{opluslem}. 
 \end{definition}
  
Examples of differential exponential rigs can be found below in Example \ref{ex2} after we have proven Proposition \ref{prop2} that every differential exponential map induces a differential exponential rig. If one keeps in mind the classical exponential function $e^x$, then the axioms of a differential exponential rig are straightforward to understand. The leftmost diagram of (\ref{des}) generalizes that $\mathsf{D}[e^x](x,y)=e^xy$, the middle diagram generalizes that $e^0=1$, and lastly the rightmost diagram generalizes that $e^{x+y}=e^xe^y$. Also note that the two right most diagrams of (\ref{des}) says that $e$ is a monoid morphism from $(A, \oplus, 0)$ to $(A, \odot, u)$. It is also worth discussing why the generalization of $e^{x+y}=e^xe^y$ is included in the axioms of a differential exponential rig, apart from being a desirable useful identity and one which is often included in algebraic generalizations of exponential functions \cite{van1984exponential}. Indeed in the classical case, the two leftmost diagrams of (\ref{des}) are sufficient for characterizing the exponential function since $e^x$ is the unique solution to the initial value problem $f^\prime(x) = f(x)$ with $f(0)=1$, and from that definition it is possible to prove that $e^{x+y}=e^xe^y$. In an arbitrary Cartesian differential category, however, it is not necessarily the case that $\mathsf{D}[e] = (e \times 1) \odot$ and $0e=u$ implies $\oplus e = (e \times e) \odot$. Furthermore, all three identities are necessary in proving that there is a bijective correspondence between differential exponential maps and differential exponential rigs. In turn, this allows us to drop the extra requirement for differential rig structure in characterizing these abstract exponential functions. That said, as we will see in Proposition \ref{uniqueprop}, if one assumes uniqueness of solutions to certain initial value problems as in the classical case, then it is possible to derive $\oplus e = (e \times e) \odot$ from $\mathsf{D}[e] = (e \times 1) \odot$ and $0e=u$. 

\begin{proposition}\label{prop1} Let $(A, \odot, u, e)$ be a differential exponential rig. Then $A \xrightarrow{e} A$ is a differential exponential map. \end{proposition}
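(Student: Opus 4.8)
The plan is to verify directly the two commuting diagrams of (\ref{dem}) defining a differential exponential map, namely $\langle 0, 1 \rangle \mathsf{D}[e] = 1$ and $(1 \times e)\mathsf{D}[e] = \oplus e$, by feeding in the three identities packaged in the differential exponential rig axioms (\ref{des}): the leftmost diagram gives $\mathsf{D}[e] = (e \times 1)\odot$, the middle diagram gives that the point $\top \xrightarrow{0} A \xrightarrow{e} A$ equals $u$, and the rightmost diagram gives $\oplus e = (e \times e)\odot$. The only extra ingredient needed is the commutative monoid unit laws for $(A, \odot, u)$ coming from (\ref{ostarmonoideq}). Notably, bilinearity of $\odot$ plays no role in this direction; all the content sits in the three exponential identities.

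The right-hand diagram is the cleaner of the two and I would do it first. Substituting $\mathsf{D}[e] = (e \times 1)\odot$ and computing $(1 \times e)(e \times 1) = e \times e$ directly from the definition of the product of maps, I obtain $(1 \times e)\mathsf{D}[e] = (e \times e)\odot$, which is exactly $\oplus e$ by the rightmost diagram of (\ref{des}). So this axiom follows in two short steps.

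For the left-hand diagram, I again substitute $\mathsf{D}[e] = (e \times 1)\odot$ to get $\langle 0, 1 \rangle \mathsf{D}[e] = \langle 0, 1 \rangle (e \times 1)\odot = \langle 0e, 1 \rangle \odot$, where here $0$ denotes the zero endomorphism $A \xrightarrow{0} A$. The goal is then to recognize this composite as the monoid unit law $\langle 0u, 1 \rangle \odot = 1$, where $0u$ is the constant point $A \xrightarrow{0} \top \xrightarrow{u} A$. The one slightly delicate point — and the main (albeit minor) obstacle — is the identification $0e = 0u$. For this I would factor the zero endomorphism through the terminal object as $A \xrightarrow{0} \top \xrightarrow{0} A$, which is valid since left composition preserves zeros (\ref{add}), so that $0e = (A \xrightarrow{0} \top)\bigl((\top \xrightarrow{0} A)e\bigr)$, and then invoke the middle diagram of (\ref{des}), which says precisely that $(\top \xrightarrow{0} A)e = u$. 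This yields $0e = 0u$, and the unit law of the commutative monoid $(A, \odot, u)$ then finishes the computation, giving $\langle 0u, 1 \rangle \odot = 1$.

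I expect the whole argument to be short and essentially diagram-chasing, with the only subtlety being the zero-map bookkeeping through $\top$ in the left diagram; once $0e = 0u$ is established, both differential exponential map axioms drop out immediately.
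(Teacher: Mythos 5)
Your proposal is correct and follows essentially the same argument as the paper's proof: substitute $\mathsf{D}[e] = (e \times 1)\odot$ into both axioms of (\ref{dem}), use $(1\times e)(e\times 1) = e \times e$ together with $\oplus e = (e\times e)\odot$ for the right diagram, and use $0e = 0u$ (via the middle diagram of (\ref{des})) plus the monoid unit law (\ref{ostarmonoideq}) for the left diagram. The only difference is cosmetic — you verify the diagrams in the opposite order and spell out the factorization of the zero endomorphism through $\top$, which the paper compresses into a single step tagged (\ref{des}).
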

\begin{proof} We first show that $\langle 0,1 \rangle \mathsf{D}[e] = 1$: 
\begin{align*}
\langle 0,1 \rangle \mathsf{D}[e] &=~ \langle 0,1 \rangle (e \times 1) \odot \tag{\ref{des}} \\
&=~ \langle 0e, 1 \rangle \odot \\
&=~ \langle 0u, 1 \rangle \odot \tag{\ref{des}} \\
&=~ 1 \tag{\ref{ostarmonoideq}}
\end{align*}
Next we show that $(1 \times e) \mathsf{D}[e] = \oplus e$: 
\begin{align*}
(1 \times e) \mathsf{D}[e] &=~ (1 \times e)(e \times 1) \odot \tag{\ref{des}} \\
&=~ (e \times e) \odot \\
&=~ \oplus e \tag{\ref{des}}
\end{align*}
So we conclude that $e$ is a differential exponential map.  
\end{proof} 

In order to show the converse of Proposition \ref{prop1}, consider the classical exponential function $e^x$ and note that its second order derivative is:
\[\mathsf{D}^2[e^x]((x,y), (z,w))=e^{x}yz + e^xw\]
Setting $x=0$ and $w=0$, one obtains precisely the multiplication of real numbers: 
\[\mathsf{D}^2[e^x]((0,y), (z,0))=yz\]
The unit for this multiplication is obtain from $e^0 = 1$. Generalizing this construction allows one to show how a differential exponential map induces a differential exponential rig. 

Before doing the proof in an arbitrary Cartesian differential category, it may be worth mapping out how the proof will go using $e^x$. Commutativity of the multiplication, which is:
\[\mathsf{D}^2[e^x]((0,y), (z,0))= \mathsf{D}^2[e^x]((0,z), (y,0)) \]
follows from \textbf{[CD.7]}, which allows one to swap the middle two arguments of the second derivative. To show that $e^0$ is the unit, that is:
\[\mathsf{D}^2[e^x]((0,e^0), (x,0))= x \]
we first observe that: 
\[\mathsf{D}^2[e^x]((0,e^x), (y,0))= e^x y = \mathsf{D}[e^x](x,y) \]
and then use the differential exponential map axiom to conclude that $e^0$ is indeed the unit. Associativity of the multiplication, which is that: 
\begin{align*}
&\mathsf{D}^2[e^x]\left((0,x), \left(\mathsf{D}^2[e^x]((0,y), (z,0)),0 \right) \right) =\\
 &\mathsf{D}^2[e^x]\left( \left(0,\mathsf{D}^2[e^x]((0,x), (y,0)) \right), \left(z,0 \right) \right)
\end{align*}
is the trickiest part of the proof. Essentially, using the differential combinator axioms and the differential exponential map axioms, one can simplify both sides of the associativity law to 
the third order derivative of $e^x$ (with $0$ evaluated in the appropriate variables): 
\[ D^3[e^x]((0,x),(y,0), (z,0),(0,0)) = xyz \]
which itself turns out to be real numbers multiplication of three variables. 

\begin{proposition}\label{prop2} Let $A \xrightarrow{e} A$ be a differential exponential map, and define the maps ${A \times A \xrightarrow{\odot_e} A}$ and $\top \xrightarrow{u_e} A$ respectively as follows: 
\[\odot_e := \xymatrixcolsep{5pc}\xymatrix{ A \times A \ar[r]^-{\langle 0,1 \rangle \times \langle 1, 0 \rangle} & (A \times A) \times (A \times A) \ar[r]^-{\mathsf{D}^2[e]} & A }\]
\[u_e := \xymatrixcolsep{5pc}\xymatrix{ \top \ar[r]^-{0} & A \ar[r]^-{e} & A } \]
Then $(A, \odot_e, u_e, e)$ is a differential exponential rig. \end{proposition}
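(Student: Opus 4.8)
The plan is to verify three things: that $(A,\odot_e,u_e)$ is a commutative monoid, that $\odot_e$ is bilinear in the sense of (\ref{odotbilin}), and that the quadruple satisfies the three diagrams of (\ref{des}). I would begin with the axioms of (\ref{des}), since they drive the rest. The middle one, $u_e = 0e$, is immediate from the definition of $u_e$. For the leftmost, $\mathsf{D}[e] = (e\times 1)\odot_e$, I would differentiate the second differential exponential map axiom $(1\times e)\mathsf{D}[e] = \oplus e$ of (\ref{dem}): applying $\mathsf{D}$ and using Lemma \ref{tanlem}(i) on the left together with the linearity of $\oplus$ (Lemma \ref{opluslem}) and Lemma \ref{linlem}(ix) on the right yields $\mathsf{T}(1\times e)\mathsf{D}^2[e] = (\oplus\times\oplus)\mathsf{D}[e]$. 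Precomposing with $\langle 0,1\rangle \times \langle 1,0\rangle$ and simplifying via $\langle 0,1\rangle\oplus = \langle 1,0\rangle\oplus = 1$ and \textbf{[CD.2]} (which forces $\langle 1,0\rangle\mathsf{D}[e]=0$) then collapses the left-hand side to $(e\times 1)\odot_e$ and the right-hand side to $\mathsf{D}[e]$. The rightmost diagram follows formally: factoring $e\times e = (1\times e)(e\times 1)$ and applying the leftmost identity gives $(e\times e)\odot_e = (1\times e)\mathsf{D}[e]$, which is $\oplus e$ by (\ref{dem}).

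Next I would establish the commutative monoid structure (\ref{ostarmonoideq}). Commutativity is the cleanest part and is where \textbf{[CD.7]} enters: since $\langle \pi_1,\pi_0\rangle(\langle 0,1\rangle\times\langle 1,0\rangle) = (\langle 0,1\rangle\times\langle 1,0\rangle)c$, precomposing $\odot_e$ with the swap map and using $c\mathsf{D}^2[e] = \mathsf{D}^2[e]$ gives $\langle \pi_1,\pi_0\rangle\odot_e = \odot_e$. For the unit laws, the identity $\langle 0u_e,1\rangle = \langle 0,1\rangle(e\times 1)$ together with the already-proven leftmost axiom and the first axiom of (\ref{dem}) give $\langle 0u_e,1\rangle\odot_e = \langle 0,1\rangle\mathsf{D}[e] = 1$, and the other unit law is then immediate from commutativity.

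For bilinearity (\ref{odotbilin}) I would differentiate $\odot_e = (\langle 0,1\rangle\times\langle 1,0\rangle)\mathsf{D}^2[e]$. As the injection is linear, Lemma \ref{linlem}(ix) exhibits $\mathsf{D}[\odot_e]$ as a restriction of $\mathsf{D}^3[e]$; splitting the directional argument with \textbf{[CD.2]} and using \textbf{[CD.6]} and \textbf{[CD.7]} to recognize each summand as $\odot_e$ evaluated at the appropriate mixed pair recovers $(\pi_0\times\pi_1)\odot_e + (\pi_1\times\pi_0)\odot_e$.

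The main obstacle is associativity, i.e.\ the right-hand square of (\ref{ostarmonoideq}). Here I would first record the auxiliary ``product rule'' obtained by differentiating the leftmost axiom once more, $\mathsf{D}^2[e] = \mathsf{T}(e\times 1)\mathsf{D}[\odot_e]$, which combined with bilinearity expands $\mathsf{D}^2[e]$ purely in terms of $\odot_e$ and $\mathsf{D}[e]$; differentiating again expresses $\mathsf{D}^3[e]$ through $\odot_e$. The strategy is then to reduce both legs of the associativity square, $(\odot_e\times 1)\odot_e$ and $(1\times\odot_e)\odot_e$, to the single third-order derivative $\mathsf{D}^3[e]$ evaluated at $((0,a),(b,0),(c,0),(0,0))$, using \textbf{[CD.6]} to pass between derivative orders and the first axiom of (\ref{dem}) to evaluate the zero slots at the unit. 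The two legs land on $\mathsf{D}^3[e]$ at points differing by a permutation of the three directional arguments, and these are identified using the iterated \textbf{[CD.7]} symmetries of $\mathsf{D}^3[e]$ (namely $(c\times c)\mathsf{D}^3[e] = \mathsf{D}^3[e]$ and $c\mathsf{D}^3[e] = \mathsf{D}^3[e]$ in the appropriate factors, which together generate the full symmetric-group action on the three directions). Keeping track of exactly which coordinate slots carry $a$, $b$, and $c$ through these reductions is the delicate bookkeeping that makes this step the crux; once it is done, all the differential rig and differential exponential rig conditions are in hand, so $(A,\odot_e,u_e,e)$ is a differential exponential rig.
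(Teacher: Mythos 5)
Your proposal is correct, and its overall architecture is the paper's: first establish the three identities of (\ref{des}) (your move of differentiating $(1\times e)\mathsf{D}[e]=\oplus e$ and precomposing with $\langle 0,1\rangle\times\langle 1,0\rangle$ is the paper's own computation read in the opposite direction, with \textbf{[CD.2]} playing the role of the paper's Lemma \ref{tanlem}(ii), and the factorization $e\times e=(1\times e)(e\times 1)$ for the third identity is identical), then commutativity via \textbf{[CD.7]}, the unit law via $\langle 0,1\rangle\mathsf{D}[e]=1$, and finally associativity by collapsing both legs onto $\mathsf{D}^3[e]$ evaluated at $(((0,a),(b,0)),((c,0),(0,0)))$ and identifying the two evaluation points through the $S_3$-action generated by $c\,\mathsf{D}^3[e]=\mathsf{D}^3[e]$ and $(c\times c)\mathsf{D}^3[e]=\mathsf{D}^3[e]$ --- exactly the paper's strategy for its hardest step. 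The one genuine divergence is bilinearity of $\odot_e$: the paper does not prove it internally but imports it from the literature (second-order partial derivatives are bilinear in context by \cite[Section 3]{cockett2011faa}, and such maps are stable under the relevant precomposition by \cite[Proposition 4.1.3]{blute2015cartesian}), whereas you derive it directly from Lemma \ref{linlem}(ix), \textbf{[CD.2]}, \textbf{[CD.6]}, and the \textbf{[CD.7]} symmetries, by splitting the direction argument of $\mathsf{D}[\odot_e]$ and recognizing each summand as $\odot_e$; this computation does go through and has the merit of making the proof self-contained. A secondary difference is that your associativity reduction routes through the ``product rule'' $\mathsf{D}^2[e]=\mathsf{T}(e\times 1)\mathsf{D}[\odot_e]$ expanded by bilinearity (essentially the identity (\ref{D2e}), which the paper only establishes later, inside the proof of Theorem \ref{isothm}), while the paper instead uses the auxiliary identity (\ref{eq77}) derived from \textbf{[CD.5]} and \textbf{[CD.6]} and never invokes bilinearity there; both are valid, but your route makes associativity depend on bilinearity, so the ordering you chose --- bilinearity before associativity --- is not optional and must be kept in the final write-up.
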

\begin{proof} We will first prove that $e$ satisfies the three identities of (\ref{des}), as these will help simplify the proof that $(A, \odot_e, u_e)$ is a differential rig. We first prove that $\mathsf{D}[e] = (e \times 1) \odot_e$: 
\begin{align*}
\mathsf{D}[e] &=~ \left( \langle 0,1 \rangle \times \langle 1, 0 \rangle \right) (\oplus \times \oplus) \mathsf{D}[e] \tag{Lemma \ref{opluslem}} \\
&=~\left( \langle 0,1 \rangle \times \langle 1, 0 \rangle \right) \mathsf{D}[\oplus e] \tag{$\oplus$ linear + Lemma \ref{linlem}} \\
&=~\left( \langle 0,1 \rangle \times \langle 1, 0 \rangle \right) \mathsf{D}\!\left[(1 \times e)\mathsf{D}[e] \right] \tag{\ref{dem}} \\
&=~\left( \langle 0,1 \rangle \times \langle 1, 0 \rangle \right) \mathsf{T}(1 \times e) \mathsf{D}^2[e] \tag{Lemma \ref{tanlem}} \\
&=~\left( \langle 0,1 \rangle \times \langle 1, 0 \rangle \right) c \left(\mathsf{T}(1) \times \mathsf{T}(e) \right) \mathsf{D}^2[e] \tag{Lemma \ref{tanlem}}\\
&=~\left( \langle 0,1 \rangle \times \langle 1, 0 \rangle \right) c \left((1 \times 1) \times \mathsf{T}(e) \right) \mathsf{D}^2[e] \tag{$\mathsf{T}$ is a functor}\\
&=~\langle \pi_1, \pi_0 \rangle \left( \langle 0,1 \rangle \times \langle 1, 0 \rangle \right) \left((1 \times 1) \times \mathsf{T}(e) \right) \mathsf{D}^2[e]\\
&=~\langle \pi_1, \pi_0 \rangle (1 \times e) \left( \langle 0,1 \rangle \times \langle 1, 0 \rangle \right) \mathsf{D}^2[e] \tag{Lemma \ref{tanlem}} \\
&=~(e \times 1) \left( \langle 0,1 \rangle \times \langle 1, 0 \rangle \right) \mathsf{D}^2[e]\\
&=~(e \times 1) \odot_e
\end{align*}
Using the above equality, we can easily show that $\oplus e = (e \times e) \odot_e$:
  \begin{align*}
 \oplus e &=~ (1 \times e)\mathsf{D}[e] \\
 &=~ (1 \times e)(e \times 1)\odot_e \\
 &=~ (e \times e) \odot_e
 \end{align*}
The remaining identity, $0 e = u_e$ is automatic by construction. So $e$ satisfies three identities of (\ref{des}). Next we show that $(A, \odot_e, u_e)$ is a differential rig. 

We first explain why $\odot_e$ is bilinear. In \cite[Section 3]{cockett2011faa} it was shown that for every map ${A \xrightarrow{f} B}$, its second order partial derivative:
\[(A \times A) \times A \xrightarrow{(1 \times 1) \times \langle 1, 0 \rangle} (A \times A) \times (A \times A) \xrightarrow{\mathsf{D}^2[f]} B\]
was bilinear in \emph{context} $A$, so bilinear in its last two arguments $A$ or equivalently bilinear with respect to the differential combinator of the simple slice category over $A$ \cite[Section 4.5]{blute2009cartesian}. By \cite[Proposition 4.1.3]{blute2015cartesian}, bilinear maps in context are preserved by pre-composition with maps which leave the bilinear arguments unaffected, that is, by pre-composition by maps of the form $(g \times 1) \times 1$. Therefore the composite:
\[(\top \times A) \times A \xrightarrow{(0 \times 1) \times \langle 1, 0 \rangle} (A \times A) \times (A \times A) \xrightarrow{\mathsf{D}^2[f]} B\]
is bilinear in context $\top$. However maps which are bilinear in context $\top$ correspond precisely to bilinear maps without context. In this case, we obtain that the composite: 
\[A \times A \xrightarrow{\langle 0, 1 \rangle \times \langle 1, 0 \rangle} (A \times A) \times (A \times A) \xrightarrow{\mathsf{D}^2[f]} B\]
is bilinear. Setting $f=e$, we conclude that $\odot_e$ is bilinear. 

Now we show that $(A, \odot_e, u_e)$ is a commutative monoid by following the intuition provided above. First that $\odot_e$ is commutative, $\langle \pi_1, \pi_0 \rangle \odot_e = \odot_e$, follows from {\bf [CD.7]}: 
 \begin{align*}
\langle \pi_1, \pi_0 \rangle \odot_e &=~ \langle \pi_1, \pi_0 \rangle \left( \langle 0,1 \rangle \times \langle 1, 0 \rangle \right) \mathsf{D}^2[e] \\
&=~ \left( \langle 0,1 \rangle \times \langle 1, 0 \rangle \right) c \mathsf{D}^2[e] \\
&=~ \left( \langle 0,1 \rangle \times \langle 1, 0 \rangle \right) \mathsf{D}^2[e] \tag*{{\bf [CD.7]}} \\
&=~\odot_e 
 \end{align*}
By commutativity, we need only show one of the unit identities, $\langle 0u_e, 1 \rangle \odot_e = 1$: 
\begin{align*}
\langle 0u_e, 1 \rangle \odot_e &=~ \langle 0 e, 1 \rangle \odot_e \tag{\ref{des}} \\
&=~ \langle 0, 1 \rangle (e \times 1) \odot_e \\
&=~ \langle 0, 1 \rangle \mathsf{D}[e] \tag{\ref{des}} \\
&=~ 1 \tag{\ref{dem}} 
\end{align*}
Finally we prove associativity, which in the author's opinion is the most complex proof in this paper. Let $(A \times A) \times A \xrightarrow{\alpha} A \times (A \times A)$ be the canonical associativity isomorphism $\alpha = \langle \pi_0, \pi_1 \times 1 \rangle$. By Lemma \ref{linlem}, $\alpha$ is linear and so is its inverse $\alpha^{-1}$. As suggested above, our goal will be to show that the third-order derivative gives the three-fold multiplication, that is, we will show that we have the following equality:
\[ \alpha (1 \times \odot_e) \odot_e = \left( \left( \langle 0, 1 \rangle \times \langle 1, 0 \rangle \right) \times \left \langle \langle 1,0 \rangle, 0 \right \rangle \right) \mathsf{D}^3 [e] \]
To do so, first note that by {\bf[CD.5]} and {\bf[CD.6]}, one can show that we have the following equality (which we leave as an exercise to the reader): 
 \begin{equation}\label{eq77}\begin{gathered} \alpha (1 \times \mathsf{D}[f]) \mathsf{D}[g] = \left( (1 \times 1) \times \langle 0,1 \rangle \right) \mathsf{D}\left[ (1 \times f) \mathsf{D}[g] \right]
 \end{gathered}\end{equation}
 Using the above identity, we compute that: 
 \begin{align*}
& \alpha (1 \times \odot_e) \mathsf{D}[e] =~ \alpha \left( 1 \times \left( \langle 0, 1 \rangle \times \langle 1, 0 \rangle \right) \right) (1 \times \mathsf{D}^2[e]) \mathsf{D}[e] \\
 &=~ \left( \left( 1 \times \langle 0, 1 \rangle \right) \times \langle 1, 0 \rangle \right) \alpha (1 \times \mathsf{D}^2[e]) \mathsf{D}[e] \\
 &=~ \left( 1 \times \left( \langle 0, 1 \rangle \times \langle 1, 0 \rangle \right) \right) \left( (1 \times 1) \times \langle 0,1 \rangle \right) \mathsf{D}\left[ (1 \times \mathsf{D}[e]) \mathsf{D}[e] \right] \tag{\ref{eq77}} \\
 &=~ \left( (1 \times \langle 0,1 \rangle) \times \langle 0, \langle 0, 1 \rangle \right) \mathsf{D}\left[ (1 \times \mathsf{D}[e]) \mathsf{D}[e] \right] \\
 &=~ \left( (1 \times \langle 0,1 \rangle) \times \langle 0, \langle 0, 1 \rangle \right) \mathsf{D}\left[ \alpha^{-1} \left( (1 \times 1) \times \langle 0,1 \rangle \right) \mathsf{D}\left[ (1 \times e) \mathsf{D}[e] \right] \right] \tag{\ref{eq77}} \\
 &=~ \left( (1 \times \langle 0,1 \rangle) \times \langle 0, \langle 0, 1 \rangle \right) \mathsf{D}\left[ \alpha^{-1} \left( (1 \times 1) \times \langle 0,1 \rangle \right) \mathsf{D}\left[ \oplus e \right] \right] \tag{\ref{dem}} \\
  &=~ \left( (1 \times \langle 0,1 \rangle) \times \langle 0, \langle 0, 1 \rangle \right) \mathsf{D}\left[ \alpha^{-1} \left( (1 \times 1) \times \langle 0,1 \rangle \right)(\oplus \times \oplus) \mathsf{D}\left[ e \right] \right] \tag{Lemma \ref{opluslem} + Lemma \ref{linlem}} \\
    &=~ \left( (1 \times \langle 0,1 \rangle) \times \langle 0, \langle 0, 1 \rangle \right) \mathsf{D}\left[ \alpha^{-1} (\oplus \times 1) \mathsf{D}\left[ e \right] \right] \tag{Lemma \ref{opluslem}} \\
       &=~ \left( (1 \times \langle 0,1 \rangle) \times \langle 0, \langle 0, 1 \rangle \right)(\alpha^{-1} \times \alpha^{-1}) \mathsf{D}\left[ (\oplus \times 1) \mathsf{D}\left[ e \right] \right] \tag{Lemma \ref{linlem}} \\
    &=~ \left( (1 \times \langle 0,1 \rangle) \times \langle 0, \langle 0, 1 \rangle \right)(\alpha^{-1} \times \alpha^{-1}) \left((\oplus \times 1) \times (\oplus \times 1) \right) \mathsf{D}^2\left[ e \right]\tag{Lemma \ref{opluslem} + Lemma \ref{linlem}} \\
 &=~ \left( (\langle 1, 0 \rangle \times 1) \times \langle \langle 0,1 \rangle, 0 \right) \left((\oplus \times 1) \times (\oplus \times 1) \right) \mathsf{D}^2\left[ e \right] \\
 &=~ \left( (1 \times 1) \times \langle 1, 0 \rangle \right) \mathsf{D}^2\left[ e \right] \tag{Lemma \ref{opluslem}} 
\end{align*}
So we have that: 
 \begin{equation}\label{eq88}\begin{gathered} \alpha (1 \times \odot_e) \mathsf{D}[e] = \left( (1 \times 1) \times \langle 1, 0 \rangle \right) \mathsf{D}^2\left[ e \right] 
 \end{gathered}\end{equation}
 Using this identity, we can simplify $\alpha (1 \times \odot_e) \odot_e$:
  \begin{align*}
& \alpha (1 \times \odot_e) \odot_e =~ \alpha (1 \times \odot_e) \left( \langle 0, 1 \rangle \times \langle 1, 0 \rangle \right) \mathsf{D}^2[e] \\
 &=~\alpha \left( \langle 0, 1 \rangle \times \langle 1, 0 \rangle \right) (1 \times \mathsf{T}(\odot_e)) \mathsf{D}^2[e] \tag{Lemma \ref{tanlem}} \\
 &=~\alpha \left( \langle 0, 1 \rangle \times \langle 1, 0 \rangle \right) (1 \times \mathsf{T}(\odot_e)) c \mathsf{D}^2[e] \tag{{\bf[CD.7]}} \\
 &=~\alpha \left( \langle 0, 1 \rangle \times \langle 1, 0 \rangle \right) (\mathsf{T}(1) \times \mathsf{T}(\odot_e)) c \mathsf{D}^2[e] \tag{$\mathsf{T}$ is a functor} \\
 &=~\alpha \left( \langle 0, 1 \rangle \times \langle 1, 0 \rangle \right) c \mathsf{T}(1 \times \odot_e) \mathsf{D}^2[e] \tag{Lemma \ref{tanlem}} \\
  &=~\alpha \left( \langle 0, 1 \rangle \times \langle 1, 0 \rangle \right) c \mathsf{D}\left[ (1 \times \odot_e) \mathsf{D}[e] \right] \tag{Lemma \ref{tanlem}} \\ 
  &=~\alpha \left( \langle 0, 1 \rangle \times \langle 1, 0 \rangle \right) c \mathsf{D}\left[ \alpha^{-1} \left( (1 \times 1) \times \langle 1, 0 \rangle \right) \mathsf{D}^2\left[ e \right] \right] \tag{\ref{eq88}} \\ 
   &=~ \alpha \left( \langle 0, 1 \rangle \times \langle 1, 0 \rangle \right) c (\alpha^{-1} \times \alpha^{-1}) \mathsf{D}\left[ \left( (1 \times 1) \times \langle 1, 0 \rangle \right) \mathsf{D}^2\left[ e \right] \right] \tag{Lemma \ref{linlem}} \\
  &=~ \alpha \left( \langle 0, 1 \rangle \times \langle 1, 0 \rangle \right) c (\alpha^{-1} \times \alpha^{-1})\\
  &~~~ \left( \left( (1 \times 1) \times \langle 1, 0 \rangle \right) \times \left( (1 \times 1) \times \langle 1, 0 \rangle \right) \right) \mathsf{D}^3 [e] \tag{Lemma \ref{linlem}} \\
   &=~ \alpha \left( \langle 0, 1 \rangle \times \langle 1, 0 \rangle \right) c (\alpha^{-1} \times \alpha^{-1}) \\
    &~~~ \left( \left( (1 \times 1) \times \langle 1, 0 \rangle \right) \times \left( (1 \times 1) \times \langle 1, 0 \rangle \right) \right) c \mathsf{D}^3 [e] \tag{{\bf[CD.7]}} \\
     &=~ \alpha \left( \langle 0, 1 \rangle \times \langle 1, 0 \rangle \right) c (\alpha^{-1} \times \alpha^{-1})\\
      &~~~ \left( \left( (1 \times 1) \times \langle 1, 0 \rangle \right) \times \left( (1 \times 1) \times \langle 1, 0 \rangle \right) \right) c \mathsf{D}\left[ c \mathsf{D}^2 [e] \right] \tag{{\bf[CD.7]}} \\
      &=~ \alpha \left( \langle 0, 1 \rangle \times \langle 1, 0 \rangle \right) c (\alpha^{-1} \times \alpha^{-1}) \\
       &~~~\left( \left( (1 \times 1) \times \langle 1, 0 \rangle \right) \times \left( (1 \times 1) \times \langle 1, 0 \rangle \right) \right) c (c \times c) \mathsf{D}^3 [e] \tag{Lemma \ref{linlem}} \\
      &=~ \left( \left( \langle 0, 1 \rangle \times \langle 1, 0 \rangle \right) \times \left \langle \langle 1,0 \rangle, 0 \right \rangle \right) \mathsf{D}^3 [e]
\end{align*}
So we have that: 
 \begin{equation}\label{eq99}\begin{gathered} \alpha (1 \times \odot_e) \odot_e = \left( \left( \langle 0, 1 \rangle \times \langle 1, 0 \rangle \right) \times \left \langle \langle 1,0 \rangle, 0 \right \rangle \right) \mathsf{D}^3 [e]
 \end{gathered}\end{equation}
 Now using the above identity and that we've already shown that $\odot_e$ is commutative, we finally can show that $\odot_e$ is associative, $(\odot_e \times 1) \odot_e= \alpha (1 \times \odot_e) \odot_e$: 
  \begin{align*}
(\odot_e \times 1) \odot_e &=~ (\odot_e \times 1) \langle \pi_1, \pi_0 \rangle \odot_e \tag{\ref{ostarmonoideq}} \\
&=~ \langle \pi_1, \pi_0 \rangle (1 \times \odot_e) \odot_e \\
&=~ \langle \pi_1, \pi_0 \rangle \alpha^{-1} \left( \left( \langle 0, 1 \rangle \times \langle 1, 0 \rangle \right) \times \left \langle \langle 1,0 \rangle, 0 \right \rangle \right) \mathsf{D}^3 [e] \tag{\ref{eq99}}\\ 
&=~ \langle \pi_1, \pi_0 \rangle \alpha^{-1} \left( \left( \langle 0, 1 \rangle \times \langle 1, 0 \rangle \right) \times \left \langle \langle 1,0 \rangle, 0 \right \rangle \right) \mathsf{D}\left[ c \mathsf{D}^2 [e] \right] \tag{{\bf[CD.7]}} \\
&=~ \langle \pi_1, \pi_0 \rangle \alpha^{-1} \left( \left( \langle 0, 1 \rangle \times \langle 1, 0 \rangle \right) \times \left \langle \langle 1,0 \rangle, 0 \right \rangle \right) (c \times c) \mathsf{D}^3 [e] \tag{Lemma \ref{linlem}} \\ 
&=~ \langle \pi_1, \pi_0 \rangle \alpha^{-1} \left( \left( \langle 0, 1 \rangle \times \langle 1, 0 \rangle \right) \times \left \langle \langle 1,0 \rangle, 0 \right \rangle \right) (c \times c) c \mathsf{D}^3 [e] \tag{{\bf[CD.7]}} \\
 &=~ \left( \left( \langle 0, 1 \rangle \times \langle 1, 0 \rangle \right) \times \left \langle \langle 1,0 \rangle, 0 \right \rangle \right) \mathsf{D}^3 [e] \\
 &=~ \alpha (1 \times \odot_e) \odot_e \tag{\ref{eq99}} 
\end{align*}
So $(A, \odot_e, u_e)$ is a differential rig, and therefore we conclude that $(A, \odot_e, u_e)$ is a differential exponential rig.
\end{proof} 

In the proof of Theorem \ref{isothm}, we will show that the constructions of Proposition \ref{prop1} and Proposition \ref{prop2} are in fact inverse of each other. The construction from Proposition \ref{prop2} is also compatible with some of the constructions of differential exponential maps in the following sense: 

\begin{lemma}\label{lemmaconstructions2} In a Cartesian differential category:
\begin{enumerate}[{\em (i)}]
\item For the terminal object $\top$, the following equality holds:
\[\odot_{1_\top} = 0 \quad \quad \quad u_{1_\top}= 1_\top\]
\item If $A \xrightarrow{e} A$ and $B \xrightarrow{e^\prime} B$ are differential exponential maps, then the following equality holds for the differential exponential map $A \times B \xrightarrow{e \times e^\prime} A \times B$:
\[\odot_{e \times e^\prime} = c (\odot_e \times \odot_{e^\prime}) \quad \quad \quad u_{e \times e^\prime} = \langle u_e, u_{e^\prime} \rangle\]
\item If $A \xrightarrow{e} A$ is a differential exponential map, then the following equality holds for the differential exponential map $A \times A \xrightarrow{\mathsf{T}(e)} A \times A$:
\[\odot_{\mathsf{T}(e)} = c \mathsf{T}(\odot_e) \quad \quad \quad u_{\mathsf{T}(e)} = \langle u_e, 0 \rangle\]
\end{enumerate}
\end{lemma}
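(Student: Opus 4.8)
The plan is to verify all six equations directly from the definitions $\odot_e = \left( \langle 0,1 \rangle \times \langle 1, 0 \rangle \right) \mathsf{D}^2[e]$ and $u_e = 0e$ of Proposition \ref{prop2}, handling the unit identities (which are straightforward) separately from the multiplication identities (which carry all the weight). For the units, in (ii) I would compute $u_{e \times e^\prime} = 0(e \times e^\prime) = \langle 0, 0 \rangle (e \times e^\prime) = \langle 0e, 0e^\prime \rangle = \langle u_e, u_{e^\prime} \rangle$, using that the zero map into a product is $\langle 0, 0 \rangle$. In (iii), since $\mathsf{T}(e) = \langle \pi_0 e, \mathsf{D}[e] \rangle$, I would compute $u_{\mathsf{T}(e)} = 0 \mathsf{T}(e) = \langle 0 \pi_0 e, 0 \mathsf{D}[e] \rangle$; the first component is $0e = u_e$, and for the second I would factor $0 = 0 \langle 1, 0 \rangle$ and invoke \textbf{[CD.2]}, namely $\langle 1, 0 \rangle \mathsf{D}[e] = 0$, to conclude $0\mathsf{D}[e] = 0$, giving $u_{\mathsf{T}(e)} = \langle u_e, 0 \rangle$. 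Part (i) is then immediate: $\top \times \top$ and $\top$ are terminal, so $\odot_{1_\top} = 0$ is forced (alternatively $\mathsf{D}^2[1_\top] = \pi_1 \pi_1$ by \textbf{[CD.3]} reduces it directly), while $u_{1_\top} = 0 \cdot 1_\top = 0 = 1_\top$, since the zero endomap of a terminal object coincides with its identity.

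The substance lies in the two multiplication identities, and for each the first move is to reduce $\mathsf{D}^2$ of the composite map to $\mathsf{D}^2[e]$ (and $\mathsf{D}^2[e^\prime]$) decorated with interchange maps, using that $c$ is linear (Lemma \ref{linlem}). For (ii), Lemma \ref{tanlem} gives $\mathsf{D}[e \times e^\prime] = c(\mathsf{D}[e] \times \mathsf{D}[e^\prime])$; differentiating once more and pushing $\mathsf{D}$ past the linear map $c$ via Lemma \ref{linlem} yields $\mathsf{D}^2[e \times e^\prime] = (c \times c)c(\mathsf{D}^2[e] \times \mathsf{D}^2[e^\prime])$. For (iii), the analogous computation uses $\mathsf{D}[\mathsf{T}(f)] = c\mathsf{T}(\mathsf{D}[f])$ (Lemma \ref{tanlem}) twice, together with linearity of $c$, to give $\mathsf{D}^2[\mathsf{T}(e)] = (c \times c)c\, \mathsf{T}(\mathsf{D}^2[e])$. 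Substituting these into the definition of $\odot$, the identity to be checked in (ii) becomes the interchange coherence $\left( \langle 0,1 \rangle \times \langle 1, 0 \rangle \right)(c \times c)c = c\left( \left( \langle 0,1 \rangle \times \langle 1, 0 \rangle \right) \times \left( \langle 0,1 \rangle \times \langle 1, 0 \rangle \right) \right)$, read off against $\odot_e \times \odot_{e^\prime}$, and the identity in (iii) becomes $\left( \langle 0,1 \rangle \times \langle 1, 0 \rangle \right)(c \times c)c = c\, \mathsf{T}\!\left( \langle 0,1 \rangle \times \langle 1, 0 \rangle \right)$, whose right-hand side is unfolded using that $\langle 0, 1 \rangle$ and $\langle 1, 0 \rangle$ are built from linear maps and Lemma \ref{tanlem}.

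I expect the interchange-map bookkeeping in these final comparisons to be the main obstacle. Each required equality is ultimately an instance of the naturality of $c$, the identity $cc = 1$, and the coherence relating $c$ to products and pairings, but tracking which copy of $A$ or $B$ each wire carries through the several nested products is delicate and error-prone; this is the only place where genuine care is needed. No conceptual ingredient beyond Lemma \ref{tanlem}, Lemma \ref{linlem}, and the linearity of $c$ is required, so once these coherence equations are established the three statements follow by direct substitution.
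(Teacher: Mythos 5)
Your proposal is correct and takes essentially the same route as the paper: the same direct unit computations, the same reduction of $\mathsf{D}^2[e \times e^\prime]$ to $(c \times c)c\left(\mathsf{D}^2[e] \times \mathsf{D}^2[e^\prime]\right)$ and of $\mathsf{D}^2[\mathsf{T}(e)]$ to $(c \times c)c\,\mathsf{T}(\mathsf{D}^2[e])$ via Lemma \ref{tanlem} and linearity of $c$ (Lemma \ref{linlem}), and the same concluding interchange coherence $(\langle 0,1 \rangle \times \langle 1,0 \rangle)(c \times c)c = c\left((\langle 0,1 \rangle \times \langle 1,0 \rangle) \times (\langle 0,1 \rangle \times \langle 1,0 \rangle)\right)$, which the paper likewise asserts as a single unjustified step. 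The only immaterial difference is your componentwise treatment of $u_{\mathsf{T}(e)}$ via \textbf{[CD.2]}, where the paper instead uses $\langle 1,0 \rangle \mathsf{T}(e) = e\langle 1,0 \rangle$ together with additivity of $\langle 1,0 \rangle$.
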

\begin{proof} 
\begin{enumerate}[{\em (i)}]
\item This is automatic by uniqueness of maps into the terminal object. 
\item For the unit, this is mostly straightforward: 
\begin{align*}
u_{e \times e^\prime} &=~0 (e \times e^\prime) \\
&=~\langle 0 e, 0 e^\prime \rangle \\
&=~ \langle u_e, u_{e^\prime} \rangle
\end{align*}
For the multiplication, we have that: 
\begin{align*}
\odot_{e \times e^\prime} &=~\left( \langle 0, 1 \rangle \times \langle 1, 0 \rangle \right) \mathsf{D}^2[e \times e^\prime] \\
&=~\left( \langle 0, 1 \rangle \times \langle 1, 0 \rangle \right) \mathsf{D}\!\left[ c \left( \mathsf{D}[e] \times \mathsf{D}[e^\prime] \right) \right] \tag{Lemma \ref{tanlem}} \\
&=~ \left( \langle 0, 1 \rangle \times \langle 1, 0 \rangle \right) (c \times c) \mathsf{D}\!\left[ \mathsf{D}[e] \times \mathsf{D}[e^\prime] \right] \tag{Lemma \ref{linlem}} \\
&=~ \left( \langle 0, 1 \rangle \times \langle 1, 0 \rangle \right) (c \times c) c \left(\mathsf{D}^2[e] \times \mathsf{D}^2[e^\prime] \right) \tag{Lemma \ref{tanlem}} \\ 
&=~ c \left(\left( \langle 0, 1 \rangle \times \langle 1, 0 \rangle \right) \times \left( \langle 0, 1 \rangle \times \langle 1, 0 \rangle \right) \right) \left(\mathsf{D}^2[e] \times \mathsf{D}^2[e^\prime] \right) \\
&=~ c (\odot_e \times \odot_{e^\prime})
\end{align*}
\item We first show the equality for the unit: 
\begin{align*}
\langle u_e, 0 \rangle &=~\langle 0e, 0 \rangle \\
&=~ 0 e \langle 1 , 0 \rangle \\
 &=~ 0 \langle 1, 0 \rangle \mathsf{T}(e) \tag{Lemma \ref{tanlem}} \\
&=~0 \mathsf{T}(e) \tag{$\langle 1, 0 \rangle$ is additive} \\ 
&=~ u_{\mathsf{T}(e)}
\end{align*}
For the multiplication, we have that: 
\begin{align*}
\odot_{\mathsf{T}(e)} &=~\left( \langle 0, 1 \rangle \times \langle 1, 0 \rangle \right) \mathsf{D}^2[\mathsf{T}(e)] \\
&=~\left( \langle 0, 1 \rangle \times \langle 1, 0 \rangle \right) \mathsf{D}\!\left[ c \mathsf{T}(\mathsf{D}[e]) \right] \tag{Lemma \ref{tanlem}} \\
&=~ \left( \langle 0, 1 \rangle \times \langle 1, 0 \rangle \right) (c \times c) \mathsf{D}\!\left[ \mathsf{T}(\mathsf{D}[e]) \right] \tag{Lemma \ref{linlem}} \\
&=~ \left( \langle 0, 1 \rangle \times \langle 1, 0 \rangle \right) (c \times c) c \mathsf{T} \left( \mathsf{D}^2[e] \right) \tag{Lemma \ref{tanlem}} \\ 
&=~ c \left(\left( \langle 0, 1 \rangle \times \langle 1, 0 \rangle \right) \times \left( \langle 0, 1 \rangle \times \langle 1, 0 \rangle \right) \right) \mathsf{T} \left( \mathsf{D}^2[e] \right) \\
&=~ c \mathsf{T} \left( \left( \langle 0, 1 \rangle \times \langle 1, 0 \rangle \right) \mathsf{D}^2[e] \right) \tag{Lemma \ref{linlem}} \\
&=~ c \mathsf{T}(\odot_e)
\end{align*}
\end{enumerate}
\end{proof} 

\begin{example}\label{ex2} \normalfont Here we apply Proposition \ref{prop2} to the examples of differential exponential maps from the previous section to construct examples of differential exponential rigs in $\mathsf{SMOOTH}$ (which are in fact rings, since $\mathsf{SMOOTH}$ has additive inverses). 
\begin{enumerate}[{\em (i)}]
\item For the exponential function $\mathbb{R} \xrightarrow{e^x} \mathbb{R}$, the induced multiplication is precisely given by the standard multiplication of real numbers, that is:
\[\odot_{e^x}(x,y) = xy\]
and $u_{e^x}(\ast) = 1$. 
\item Applying Lemma \ref{lemmaconstructions2}.ii to the point-wise exponential $\mathbb{R}^{n} \xrightarrow{e^x \times \hdots \times e^x} \mathbb{R}^n$, we obtain the point-wise multiplication of vectors, that is, 
\[\odot_{e^x \times \hdots \times e^x}\left( (x_1, \hdots, x_n), (y_1, \hdots, y_n) \right) = (x_1 y_1, \hdots, x_n y_n)\]
and $u_{e^x \times \hdots \times e^x}(\ast) = (1, \hdots, 1)$. 
\item Applying Lemma \ref{lemmaconstructions2}.iii to the tangent exponential function ${\mathbb{R}^{2} \xrightarrow{\mathsf{T}(e^x)} \mathbb{R}^2}$, we obtain the multiplication:
\[\odot_{\mathsf{T}(e)}\left( (x_1,y_1), (x_2,y_2) \right) = \left( x_1x_2, x_1y_2 + y_1x_2 \right)\]
with unit $u_{\mathsf{T}(e)}(\ast) = (1, 0)$. Observe that this ring structure on $\mathbb{R}^2$ is precisely that of the ring of dual numbers $\mathbb{R}[\varepsilon]$. Indeed, writing $(x,y)$ as $x + y \varepsilon$ with $\varepsilon^2=0$, we see that $\odot_{\mathsf{T}(e)}$ is precisely the multiplication of dual numbers:
 \[(x_1 + y_1\varepsilon)(x_2 +y_2 \varepsilon) = x_1x_2 + (x_1y_2 + y_1x_2)\varepsilon\]
\item For the complex exponential function $\mathbb{R}^2 \xrightarrow{\epsilon} \mathbb{R}^2$, we obtain the multiplication:
\[\odot_{\epsilon}\left( (x_1,y_1), (x_2,y_2) \right) = \left( x_1x_2 - y_1y_2, x_1y_2 + x_2y_1 \right)\]
with unit $u_{\epsilon}(\ast) = (1, 0)$. Unsurprisingly, this ring structure on $\mathbb{R}^2$ is that of complex numbers $\mathbb{C}$. Indeed, writing $(x,y)$ as $x+iy$ with $i^2=-1$, $\odot_{\epsilon}$ gives precisely complex number multiplication: 
\[(x_1+iy_1)(x_2+iy_2) = (x_1x_2 - y_1y_2) + i(x_1y_2 + x_2y_1)\]
\item For the split complex exponential function $\mathbb{R}^2 \xrightarrow{\epsilon^\prime} \mathbb{R}^2$, we obtain the multiplication:
\[\odot_{\epsilon^\prime}\left( (x_1,y_1), (x_2,y_2) \right) = \left( x_1x_2 + y_1y_2, x_1y_2 + x_2y_1 \right)\]
with unit $u_{\epsilon^\prime}(\ast) = (1, 0)$. This ring structure on $\mathbb{R}^2$ is that of split complex numbers $\mathbb{C}^\prime$. Indeed, writing $(x,y)$ as $x+jy$ with $j^2=1$, $\odot_{\epsilon^\prime}$ gives split complex number multiplication: 
\[(x_1+jy_1)(x_2+jy_2) = (x_1x_2 + y_1y_2) + j(x_1y_2 + x_2y_1)\]
\end{enumerate}
\end{example}

\begin{example} \normalfont \label{curveexp2} As briefly discussed in Example \ref{curveexp1}, a differential curve object with solutions to linear systems admits a differential exponential map, and so by applying Proposition \ref{prop2}, a differential curve object is also a differential exponential rig \cite[Corollary 5.26]{cockett2019differential}. As an interesting application of this induced rig structure, in turns out that every differential bundle \cite{cockett2016differential} is a module of the differential curve object \cite[Proposition 5.4]{cockett2019differential}. 
\end{example}

For a Cartesian differential category $\mathbb{X}$, define its category of differential exponential rigs as the category $\mathsf{DES}[\mathbb{X}]$ whose objects are differential exponential rigs $(A,\odot, u, e)$ and where a map $(A,\odot, u, e) \xrightarrow{f} (B,\odot^\prime, u^\prime, e^\prime)$ is a \emph{linear} map $A \xrightarrow{f} B$ in $\mathbb{X}$ such that the following diagrams commutes: 
 \begin{equation}\label{desmap}\begin{gathered} \xymatrixcolsep{2.5pc}\xymatrix{A \ar[d]_-{e} \ar[r]^-{f} & B \ar[d]^-{e^\prime} & \top \ar[r]^-{u} \ar[dr]_-{u^\prime} & A \ar[d]^-{u} & A \times A \ar[d]_-{\odot} \ar[r]^-{f \times f} & B \times B \ar[d]^-{\odot^\prime} \\
A \ar[r]_-{f} & B & & B & A \ar[r]_-{f} & B } \end{gathered}\end{equation}
and where composition and identity maps are as in $\mathbb{X}$. Note that the two right most diagrams above imply that $f$ is a monoid morphism. 

\begin{theorem}\label{isothm} For a Cartesian differential category $\mathbb{X}$, its category of differential exponential maps $\mathsf{DEM}[\mathbb{X}]$ is isomorphic to its category of differential exponential rigs $\mathsf{DES}[\mathbb{X}]$ via the inverse functors $\mathsf{DEM}[\mathbb{X}] \xrightarrow{\mathsf{E}} \mathsf{DES}[\mathbb{X}]$ and $\mathsf{DES}[\mathbb{X}] \xrightarrow{\mathsf{E}^{-1}} \mathsf{DEM}[\mathbb{X}]$ defined respectively as follows:
\begin{align*} \mathsf{E}(A,e) = (A, \odot_e, u_e, e) && \mathsf{E}(f) = f && \mathsf{E}^{-1}(A, \odot, u, e)= (A,e) && \mathsf{E}^{-1}(f) = f 
\end{align*}
\end{theorem}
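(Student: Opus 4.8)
The plan is to verify that $\mathsf{E}$ and $\mathsf{E}^{-1}$ are well-defined functors and that they are mutually inverse. Well-definedness on objects is already in hand: $\mathsf{E}(A,e) = (A,\odot_e,u_e,e)$ is a differential exponential rig by Proposition \ref{prop2}, and $\mathsf{E}^{-1}(A,\odot,u,e) = (A,e)$ is a differential exponential map by Proposition \ref{prop1}. Since both functors act as the identity on underlying morphisms and since composition and identities in $\mathsf{DEM}[\mathbb{X}]$ and $\mathsf{DES}[\mathbb{X}]$ are inherited from $\mathbb{X}$, functoriality is automatic once well-definedness on morphisms is settled. Moreover $\mathsf{E}^{-1}\mathsf{E}(A,e) = (A,e)$ holds by construction, so that composite is the identity immediately; the content therefore lies in well-definedness on morphisms and in the other composite $\mathsf{E}\mathsf{E}^{-1} = \mathrm{id}$.

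For well-definedness on morphisms, $\mathsf{E}^{-1}$ is trivial: a map of differential exponential rigs is in particular a linear map commuting with the $e$'s (the leftmost diagram of (\ref{desmap})), which is exactly the condition (\ref{demmap}) to be a map of differential exponential maps. The substance is that $\mathsf{E}$ sends a map $f\colon(A,e)\to(B,e')$ of $\mathsf{DEM}[\mathbb{X}]$ to a map of $\mathsf{DES}[\mathbb{X}]$, i.e. that a linear $f$ with $ef=fe'$ automatically preserves the induced units and multiplications. Preservation of units is short: $u_e f = 0ef = 0fe' = 0e' = u_{e'}$, using $ef=fe'$ and that the linear, hence additive, map $f$ satisfies $0f=0$. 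For multiplications, the point is that linearity lets derivatives pass through $f$: by Lemma \ref{linlem}(ix) applied twice, $ef=fe'$ yields first $\mathsf{D}[e]f = (f\times f)\mathsf{D}[e']$ and then $\mathsf{D}^2[e]f = \left((f\times f)\times(f\times f)\right)\mathsf{D}^2[e']$. Combining this with the identities $\langle 0,1\rangle(f\times f) = f\langle 0,1\rangle$ and $\langle 1,0\rangle(f\times f) = f\langle 1,0\rangle$ (which again use $0f=0$ and $f0=0$) collapses $\odot_e f = (\langle 0,1\rangle\times\langle 1,0\rangle)\mathsf{D}^2[e]f$ to $(f\times f)\odot_{e'}$. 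This is precisely the step that forces the morphisms of $\mathsf{DEM}[\mathbb{X}]$ to be linear.

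The main obstacle is the composite $\mathsf{E}\mathsf{E}^{-1} = \mathrm{id}$, which requires showing that the rig structure reconstructed from the exponential map of a differential exponential rig $(A,\odot,u,e)$ recovers the original data, namely $\odot_e = \odot$ and $u_e = u$. The unit is immediate from the middle axiom of (\ref{des}): $u_e = 0e = u$. The multiplication is the heart of the theorem. Starting from $\odot_e = (\langle 0,1\rangle\times\langle 1,0\rangle)\mathsf{D}^2[e]$, I would use the leftmost axiom of (\ref{des}), $\mathsf{D}[e] = (e\times 1)\odot$, to expand the second derivative as $\mathsf{D}^2[e] = \mathsf{D}[(e\times 1)\odot] = \mathsf{T}(e\times 1)\mathsf{D}[\odot]$ via Lemma \ref{tanlem}(i). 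Feeding in $\langle 0,1\rangle\times\langle 1,0\rangle$ and simplifying $(\langle 0,1\rangle\times\langle 1,0\rangle)\mathsf{T}(e\times 1)$ with the differential exponential map identities $\langle 0,1\rangle\mathsf{D}[e] = 1$ and $0e=u$ (together with $\mathsf{D}[1]=\pi_1$), one reduces the problem to $\mathsf{D}[\odot]$ evaluated with the base-point argument carrying $u$ in its first slot and a zero in the relevant tangent slot. Invoking the bilinearity of $\odot$ from (\ref{odotbilin}) and the rig identity $u\odot 0 = 0$ then makes the $u$-term vanish and leaves exactly $\odot$. This expansion, with careful bookkeeping of which of the four arguments of the second derivative is the base point and which is the tangent vector, is the one genuinely delicate calculation, and once it is done the theorem follows.
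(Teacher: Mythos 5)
Your proposal is correct and follows essentially the same route as the paper's proof: well-definedness on objects via Propositions \ref{prop1} and \ref{prop2}, the identical computations showing $\mathsf{E}$ preserves units ($u_e f = 0ef = 0fe' = 0e' = u_{e'}$) and multiplications (linearity of $f$ pushed through $\mathsf{D}^2$ twice), the trivial direction $\mathsf{E}^{-1}\mathsf{E} = \mathrm{id}$, and for the key step $\odot_e = \odot$ the same expansion $\mathsf{D}^2[e] = \mathsf{D}[(e\times 1)\odot] = \mathsf{T}(e\times 1)\mathsf{D}[\odot]$ combined with bilinearity of $\odot$ and the annihilation law. The only difference is one of bookkeeping order: the paper first derives the standalone identity $\mathsf{D}^2[e] = (\pi_0\times\pi_1)\mathsf{D}[e] + (\mathsf{D}[e]\times\pi_0)\odot$ (which it later reuses in the proof of Proposition \ref{comlinprop}) and only then precomposes with $\langle 0,1\rangle\times\langle 1,0\rangle$, whereas you precompose first so that the $u$-term vanishes earlier; both calculations go through.
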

\begin{proof} We first need to show that $\mathsf{E}$ and $\mathsf{E}^{-1}$ are well-defined functors. By Proposition \ref{prop1}, $\mathsf{E}^{-1}$ is well-defined on objects, while $\mathsf{E}$ is well-defined on objects by Proposition \ref{prop2}. If $f$ is a map in $\mathsf{DES}[\mathbb{X}]$ then by definition it is also a map in $\mathsf{DEM}[\mathbb{X}]$, so $\mathsf{E}^{-1}$ is well-defined on maps. Furthermore, $\mathsf{E}^{-1}$ clearly preserves composition and identity, and therefore $\mathsf{E}^{-1}$ is a well-defined functor. On the other hand, if $(A,e) \xrightarrow{f} (B,e^\prime)$ is a map in $\mathsf{DEM}[\mathbb{X}]$, we must show that $f$ also satisfies the three identities (\ref{desmap}). By definition, one already has that $ef= fe^\prime$, and so it remains to show that $f$ is also a monoid morphism. Recall that since $f$ is linear, $f$ is also additive (Lemma \ref{linlem}). Now we first show that $u_e f= u_{e^\prime}$: 
\begin{align*}
u_e f &=~0 e f \\
&=~ 0 f e^\prime \tag{\ref{demmap}} \\
&=~ 0 e^\prime \tag{$f$ additive} \\
&=~u_{e^\prime} 
\end{align*}
Next we show that $\odot_e f = (f \times f) \odot_{e^\prime}$: 
\begin{align*}
\odot_e f &=~ \left( \langle 0,1 \rangle \times \langle 1, 0 \rangle \right) \mathsf{D}^2[e] f \\
&=~ \left( \langle 0,1 \rangle \times \langle 1, 0 \rangle \right) \mathsf{D}^2[ef] \tag{$f$ linear + Lemma \ref{linlem}} \\
&=~ \left( \langle 0,1 \rangle \times \langle 1, 0 \rangle \right) \mathsf{D}^2[fe^\prime]\tag{\ref{demmap}} \\
&=~ \left( \langle 0,1 \rangle \times \langle 1, 0 \rangle \right) \left((f \times f) \times (f \times f) \right) \mathsf{D}^2[e^\prime]\tag{$f$ linear + Lemma \ref{linlem}} \\
&=~ \left( \langle 0f,f \rangle \times \langle f, 0f \rangle \right)\mathsf{D}^2[e^\prime]\\
&=~ \left( \langle 0,f \rangle \times \langle f, 0 \rangle \right)\mathsf{D}^2[e^\prime] \tag{$f$ additive} \\
&=~(f \times f) \left( \langle 0,1 \rangle \times \langle 1, 0 \rangle \right) \mathsf{D}^2[e^\prime] \\
&=~ (f \times f) \odot_{e^\prime}
\end{align*}
Therefore $f$ is a map in $\mathsf{DES}[\mathbb{X}]$, so $\mathsf{E}$ is well-defined on maps. Clearly $\mathsf{E}$ preserves composition and identity, and therefore $\mathsf{E}$ is also a well-defined functor.

Next we show that $\mathsf{E}$ and $\mathsf{E}^{-1}$ are inverses of each other. Clearly we have both that $\mathsf{E}^{-1}\mathsf{E}(A,e) = (A,e)$ and $\mathsf{E}^{-1}\mathsf{E}(f) = f$. For the other direction, clearly $\mathsf{E}\mathsf{E}^{-1}(f) = f$ and so it remains to show that we also have that $\mathsf{E}\mathsf{E}^{-1}(A, \odot, u, e) = (A, \odot, u, e)$, that is, we must show that $\odot = \odot_e$ and $u_e = u$. Starting with the unit: 
\begin{align*}
u_e &=~0 e \\
&=~u \tag{\ref{des}}
\end{align*}
Next for the multiplication, we first observe that: 
\begin{align*}
\mathsf{D}^2[e] &=~ \mathsf{D}\left[ (e \times 1) \odot \right] \tag{\ref{des}} \\
&=~ \mathsf{T}(e \times 1) \mathsf{D}\left[ \odot \right] \tag{Lemma \ref{tanlem}} \\
&=~ c \left(\mathsf{T}(e) \times \mathsf{T}(1) \right) c \mathsf{D}\left[ \odot \right] \tag{Lemma \ref{tanlem}} \\ 
&=~ c \left(\mathsf{T}(e) \times1 \right) c \mathsf{D}\left[ \odot \right] \tag{$\mathsf{T}$ is a functor} \\ 
&=~c \left(\mathsf{T}(e) \times 1 \right) c(\pi_0 \times \pi_1) \odot + c \left(\mathsf{T}(e) \times 1 \right) c (\pi_1 \times \pi_0) \odot \tag{\ref{odotbilin}} \\
&=~ c \left(\mathsf{T}(e) \times 1 \right) (\pi_0 \times \pi_1) \odot + c \left(\mathsf{T}(e) \times 1 \right) (\pi_1 \times \pi_0) \langle \pi_1, \pi_0 \rangle \odot \\
&=~ c \left(\mathsf{T}(e) \times 1 \right) (\pi_0 \times \pi_1) \odot + c \left(\mathsf{T}(e) \times 1 \right) (\pi_1 \times \pi_0) \odot \tag{\ref{ostarmonoideq}} \\ 
&=~ c (\pi_0 \times \pi_1)(e \times 1) \odot + c (\mathsf{D}[e] \times \pi_0) \odot \tag{Definition of $\mathsf{T}$} \\
&=~ (\pi_0 \times \pi_1)(e \times 1) \odot + c \left( (e \times 1) \times (1 \times 1) \right) (\odot \times \pi_0) \odot \tag{\ref{des}} \\ 
&=~ (\pi_0 \times \pi_1)(e \times 1) \odot + \left( (e \times 1) \times (1 \times 1) \right) c (\odot \times \pi_0) \odot \\ 
&=~ (\pi_0 \times \pi_1)(e \times 1) \odot + \left( (e \times 1) \times (1 \times 1) \right) c \left( (1 \times 1) \times \pi_0 \right) (\odot \times 1) \odot \\ 
&=~ (\pi_0 \times \pi_1)(e \times 1) \odot \\
&~~~+ \left( (e \times 1) \times (1 \times 1) \right) c \left( (1 \times 1) \times \pi_0 \right) \alpha (1\times \odot) \odot \tag{\ref{ostarmonoideq}} \\ 
&=~ (\pi_0 \times \pi_1)(e \times 1) \odot \\
&~~~+ \left( (e \times 1) \times (1 \times 1) \right) \langle \pi_0, (\pi_1 \times \pi_0) \rangle (1 \times \langle \pi_1, \pi_0 \rangle) (1\times \odot) \odot \\
&=~ (\pi_0 \times \pi_1)(e \times 1) \odot \\
&~~~+ \left( (e \times 1) \times (1 \times 1) \right) \langle \pi_0\pi_0, (\pi_1 \times \pi_0) \rangle (1 \times \odot) \odot \tag{\ref{ostarmonoideq}} \\ 
&=~ (\pi_0 \times \pi_1)(e \times 1) \odot \\
&~~~+ \left( (e \times 1) \times (1 \times 1) \right) \langle \pi_0\pi_0, (\pi_1 \times \pi_0) \rangle \alpha^{-1} (\odot \times 1) \odot \tag{\ref{ostarmonoideq}} \\ 
&=~ (\pi_0 \times \pi_1)(e \times 1) \odot + \left( (e \times 1) \times (1 \times 1) \right) \left( (1 \times 1) \times \pi_0 \right) (\odot \times 1) \odot \\ 
&=~ (\pi_0 \times \pi_1)(e \times 1) \odot + \left( (e \times 1) \times \pi_0 \right) (\odot \times 1) \odot \\ 
&=~ (\pi_0 \times \pi_1) \mathsf{D}[e] + (\mathsf{D}[e] \times \pi_0) \odot 
\end{align*}
So we have that: 
\begin{equation}\label{D2e}\begin{gathered} \mathsf{D}^2[e] = (\pi_0 \times \pi_1) \mathsf{D}[e] + (\mathsf{D}[e] \times \pi_0) \odot 
 \end{gathered}\end{equation}
 Using the above identity, we can easily show that $\odot_e = \odot$: 
\begin{align*}
\odot_e &=~ \left( \langle 0,1 \rangle \times \langle 1, 0 \rangle \right) \mathsf{D}^2[e] \\
&=~\left( \langle 0,1 \rangle \times \langle 1, 0 \rangle \right) (\pi_0 \times \pi_1) \mathsf{D}[e] + \left( \langle 0,1 \rangle \times \langle 1, 0 \rangle \right) (\mathsf{D}[e] \times \pi_0) \odot \tag{\ref{D2e}} \\
&=~(0 \times 0) \mathsf{D}[e] + (1 \times 1) \odot \tag{\ref{dem}} \\
&=~0 + \odot \tag{\bf[CD.2]} \\
&=~\odot 
\end{align*}
So $(A, \odot, u, e)= (A, \odot_e, u_e, e)$. Therefore we conclude that $\mathsf{E}$ and $\mathsf{E}^{-1}$ are inverse functors and that $\mathsf{DEM}[\mathbb{X}]$ is isomorphic to $\mathsf{DES}[\mathbb{X}]$. 
\end{proof} 

We conclude this section with the observation that as an immediate consequence of both Theorem \ref{isothm} and Lemma \ref{lemmaconstructions2}: the category of differential exponential rigs is a Cartesian tangent category and that it is isomorphic as a Cartesian tangent category to the category of differential exponential maps. 

\begin{proposition}\label{DEStan} For a Cartesian differential category $\mathbb{X}$, $\mathsf{DES}[\mathbb{X}]$ has finite products where the terminal object is $(\top, 0, 1_\top, 1_\top)$, and where the product of $(A, \odot, u, e)$ and $(B, \odot^\prime, u^\prime, e^\prime)$ is $(A \times B, c (\odot \times \odot^\prime), \langle u, u^\prime \rangle, e \times e^\prime)$ 
with the obvious projection maps. $\mathsf{DES}[\mathbb{X}]$ is also a Cartesian tangent category where the tangent functor $\mathsf{T}: \mathsf{DES}[\mathbb{X}] \to \mathsf{DES}[\mathbb{X}]$ is defined as follows:
\[\mathsf{T}(A, \odot, u, e) := (A \times A, c \mathsf{T}(\odot), \langle u, 0 \rangle, \mathsf{T}(e)) \quad \quad \quad \mathsf{T}(f) = f \times f\]
and where the remaining tangent structure is the same as for $\mathbb{X}$ (which can be found in {\cite[Proposition 4.7]{cockett2014differential}}). Furthermore, both $\mathsf{DEM}[\mathbb{X}] \xrightarrow{\mathsf{E}} \mathsf{DES}[\mathbb{X}]$ and $\mathsf{DEM}[\mathbb{X}] \xrightarrow{\mathsf{E}^{-1}} \mathsf{DES}[\mathbb{X}]$ preserve the Cartesian tangent structure strictly. 
\end{proposition}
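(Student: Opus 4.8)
The plan is to transport the Cartesian tangent structure of $\mathsf{DEM}[\mathbb{X}]$ (established in Proposition \ref{DEMtangent}) across the isomorphism of categories $\mathsf{DEM}[\mathbb{X}] \xrightarrow{\mathsf{E}} \mathsf{DES}[\mathbb{X}]$ from Theorem \ref{isothm}, and then to use Lemma \ref{lemmaconstructions2} to read off the explicit formulas claimed in the statement. The structural fact that makes this painless is that $\mathsf{E}$ and $\mathsf{E}^{-1}$ act as the identity on underlying linear maps; consequently a linear map is a morphism of $\mathsf{DEM}[\mathbb{X}]$ exactly when it is a morphism of $\mathsf{DES}[\mathbb{X}]$, so the two categories share the same morphisms once objects are identified along the bijection. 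In particular, every limit cone, product, and tangent-structure map that lives in $\mathsf{DEM}[\mathbb{X}]$ is automatically one in $\mathsf{DES}[\mathbb{X}]$. Moreover, since $\mathsf{E}$ is an isomorphism, every object of $\mathsf{DES}[\mathbb{X}]$ is of the form $\mathsf{E}(A,e) = (A, \odot_e, u_e, e)$, which by $\mathsf{E}\mathsf{E}^{-1} = \mathrm{id}$ gives $\odot = \odot_e$ and $u = u_e$ for any object $(A, \odot, u, e)$.

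For finite products, I would apply $\mathsf{E}$ to the product structure of $\mathsf{DEM}[\mathbb{X}]$ described in Corollary \ref{corprod}. Since $\mathsf{E}$ is an isomorphism of categories it preserves and reflects products, so $\mathsf{DES}[\mathbb{X}]$ has finite products given by the $\mathsf{E}$-images of those in $\mathsf{DEM}[\mathbb{X}]$. Concretely, the terminal object $(\top, 1_\top)$ maps to $(\top, \odot_{1_\top}, u_{1_\top}, 1_\top) = (\top, 0, 1_\top, 1_\top)$ by Lemma \ref{lemmaconstructions2}.i, and the product $(A \times B, e \times e^\prime)$ maps to $(A \times B, \odot_{e \times e^\prime}, u_{e \times e^\prime}, e \times e^\prime) = (A \times B, c(\odot_e \times \odot_{e^\prime}), \langle u_e, u_{e^\prime} \rangle, e \times e^\prime)$ by Lemma \ref{lemmaconstructions2}.ii. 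Using $\odot_e = \odot$, $\odot_{e^\prime} = \odot^\prime$, $u_e = u$, and $u_{e^\prime} = u^\prime$, this is precisely $(A \times B, c(\odot \times \odot^\prime), \langle u, u^\prime \rangle, e \times e^\prime)$, with the projections $\pi_0, \pi_1$ unchanged, as claimed.

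For the tangent functor, I would take the conjugated functor $\mathsf{E} \circ \mathsf{T} \circ \mathsf{E}^{-1}$, where $\mathsf{T}$ is the tangent functor of $\mathsf{DEM}[\mathbb{X}]$ from Proposition \ref{DEMtangent}. On an object $(A, \odot, u, e)$ this yields successively $(A, e)$, then $(A \times A, \mathsf{T}(e))$, and finally $(A \times A, \odot_{\mathsf{T}(e)}, u_{\mathsf{T}(e)}, \mathsf{T}(e))$. Invoking Lemma \ref{lemmaconstructions2}.iii together with $\odot = \odot_e$ and $u = u_e$ gives $(A \times A, c\mathsf{T}(\odot), \langle u, 0 \rangle, \mathsf{T}(e))$, as required, while on morphisms it sends $f$ to $f \times f$. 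The remaining tangent-structure natural transformations are exactly those inherited from $\mathbb{X}$; they are maps in $\mathsf{DEM}[\mathbb{X}]$ by Proposition \ref{DEMtangent}, hence maps in $\mathsf{DES}[\mathbb{X}]$ by the morphism-coincidence noted above, and the tangent axioms together with the required limits hold in $\mathsf{DES}[\mathbb{X}]$ because they already hold in $\mathsf{DEM}[\mathbb{X}]$ (equivalently in $\mathbb{X}$). Strict preservation of the Cartesian tangent structure by $\mathsf{E}$ and $\mathsf{E}^{-1}$ is then immediate, since the structure on $\mathsf{DES}[\mathbb{X}]$ is by construction the image of that on $\mathsf{DEM}[\mathbb{X}]$ and both functors fix underlying maps.

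I do not expect a serious obstacle here, as the whole argument is transport of structure along an isomorphism. The one point demanding genuine care is that the tangent-structure maps must respect not merely the exponential maps but also the induced rig structure on source and target; this is exactly what the coincidence of morphisms between $\mathsf{DEM}[\mathbb{X}]$ and $\mathsf{DES}[\mathbb{X}]$ (implicit in the well-definedness of $\mathsf{E}$ in Theorem \ref{isothm}) guarantees, so the verification reduces entirely to the already-completed computations of Proposition \ref{DEMtangent} and Lemma \ref{lemmaconstructions2}.
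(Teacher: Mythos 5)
Your proposal is correct and takes essentially the same approach as the paper: the paper offers no separate proof of Proposition \ref{DEStan}, presenting it precisely as an immediate consequence of transporting the Cartesian tangent structure of $\mathsf{DEM}[\mathbb{X}]$ (Proposition \ref{DEMtangent}, Corollary \ref{corprod}) along the isomorphism of Theorem \ref{isothm}, with Lemma \ref{lemmaconstructions2} supplying the explicit formulas for the terminal object, products, and tangent functor. Your write-up simply makes this transport-of-structure argument explicit, including the key observations that $\mathsf{E}$ and $\mathsf{E}^{-1}$ fix underlying maps and that $\mathsf{E}\mathsf{E}^{-1} = \mathrm{id}$ forces $\odot = \odot_e$ and $u = u_e$.
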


\section{Solutions to Dynamical Systems}\label{DYNsec}

As introduced in \cite[Section 5]{cockett2017connections}, ordinary differential equations in a Cartesian differential category are described as \emph{dynamical systems}, while solutions for these differential equations are described as morphisms between these dynamical systems. In the classical case, the exponential function $e^x$ can be defined as the unique solution to the initial value problem $f^\prime(x) = f(x)$ with $f(0)=1$. In this section, we explain how differential exponential maps provide solutions to certain (parametrized) dynamical systems and conversely how one can obtain a differential exponential map if one assumes that solutions are unique. See \cite[Section 5]{cockett2019differential} for more applications of differential exponential maps in regards to solving differential equations. 

We note that in \cite{cockett2017connections,cockett2019differential}, dynamical systems were defined for tangent categories and thus involves the tangent functor. Here we present the resulting definition specific to Cartesian differential categories, where dynamical systems can be described in terms of the differential combinator. 

\begin{definition} In a Cartesian differential category, 
\begin{enumerate}[{\em (i)}]
\item A \textbf{dynamical system} \cite[Definition 5.15]{cockett2017connections} is a triple $(A, a_0, a_1)$ consisting of an object $A$, a point $\top \xrightarrow{a_0} A$, and an endomorphism $A \xrightarrow{a_1} A$;
\item A \textbf{morphism of dynamical systems} $(A, a_0, a_1) \xrightarrow{f} (A^\prime, a^\prime_0, a^\prime_1)$ is a map $A \xrightarrow{f} A^\prime$ such that the following diagram commutes: 
 \begin{equation}\label{solution2}\begin{gathered} \xymatrixcolsep{5pc}\xymatrix{ & \top \ar[dr]^-{a^\prime_0} \ar[dl]_-{a_0} \\
   A \ar[rr]^-{f} \ar[d]_-{\left \langle 1, a_1 \right \rangle} & & A^\prime \ar[d]^-{a^\prime_1} \\
   A \times A \ar[rr]_-{\mathsf{D}[f]} && A^\prime
    } \end{gathered}\end{equation}
    \item If $(A, a_0, a_1) \xrightarrow{f} (A^\prime, a^\prime_0, a^\prime_1)$ is a morphism of dynamical systems, we say that $f$ is an \textbf{$(A, a_0, a_1)$-solution} of $(A^\prime, a^\prime_0, a^\prime_1)$. 
\end{enumerate}
\end{definition}

\begin{example}\label{smoothdynex1} \normalfont A dynamical system in $\mathsf{SMOOTH}$ can be seen as a triple $(\mathbb{R}^n, \vec a, F)$ where $\mathbb{R}^n \xrightarrow{F} \mathbb{R}^n$ is a smooth function and a point ${\vec a \in \mathbb{R}^n}$. If $(\mathbb{R}^n, \vec a, F)$ and $(\mathbb{R}^m, \vec b, G)$ are dynamical systems, then a $(\mathbb{R}^n, \vec a, F)$-solution of $(\mathbb{R}^m, \vec b, G)$ is a smooth function $\mathbb{R}^n \xrightarrow{H} \mathbb{R}^m$ such that:
\begin{align*}
H(\vec a) = \vec b && \mathsf{D}[H](\vec x, F(\vec x)) = G(H(\vec x))
\end{align*}
which amounts to saying that $H$ is a solution to a certain (large) system of differential equations. For a more explicit example, let $\mathbb{R} \xrightarrow{\overline{c}} \mathbb{R}$ be a non-zero constant function $\overline{c}(x) = c$, $c\neq 0$, and define the smooth function $\mathbb{R} \xrightarrow{g} \mathbb{R}$ as $g(x) = -rx$ for some $r \in \mathbb{R}$. Then a $(\mathbb{R}, 0, \overline{c})$-solution of $(\mathbb{R}, a, g)$ is a smooth function $\mathbb{R} \xrightarrow{f} \mathbb{R}$ such that:
\begin{align*}
f(0) = a && \mathsf{D}[f](x, c) = - r f(x) 
\end{align*}
which is equivalent to saying that $f$ is a solution to the following linear differential equation: 
\begin{align*}
f(0) = b && f^\prime(x) + \lambda f(x) = 0  
\end{align*}
where $\lambda = \frac{r}{c}$. See \cite{cockett2017connections,cockett2019differential} for more details and intuition on dynamical systems. 
\end{example}

For any differential rig $(A, \odot, u)$, there is a canonical dynamical system $(A, 0, \overline{u})$ where $A \xrightarrow{\overline{u}} A$ is defined as follows: 
\begin{equation}\label{ubar}\begin{gathered} \overline{u} := \xymatrixcolsep{5pc}\xymatrix{ A \ar[r]^-{0} & \top \ar[r]^-{u} & A
 } \end{gathered}\end{equation}
and we can ask that $(A, 0, \overline{u})$-solutions be compatible with the multiplication. 

\begin{definition} Let $(A, \odot, u)$ be a differential rig and $(A, a_0, a_1)$ a dynamical system. An \textbf{$(A, \odot, u)$-solution} of $(A, a_0, a_1)$ is a map $A \xrightarrow{f} A$ such that the following diagrams commute: 
 \begin{equation}\label{solution4}\begin{gathered} \xymatrixcolsep{2.5pc}\xymatrixrowsep{1.5pc}\xymatrix{ \top \ar[r]^-{0} \ar[dr]_-{a_0} & A \ar[d]^-{f} & A \times A \ar[r]^-{f \times 1} \ar[ddrr]_-{\mathsf{D}[f]} & A \times A\ar[r]^-{a_1 \times 1} & A \times A \ar[dd]^-{\odot} \\   
 & A \\
 & &&& A } \end{gathered}\end{equation}
\end{definition}

\begin{lemma}\label{sollem1} Let $(A, \odot, u)$ be a differential rig, $(A, a_0, a_1)$ a dynamical system, ${A \xrightarrow{f} A}$ an endomorphism. Then $f$ is an $(A, \odot, u)$-solution of $(A, a_0, a_1)$ if and only if $f$ is an $(A, 0, \overline{u})$-solution of $(A, a_0, a_1)$ such that the following diagram commutes: 
 \begin{equation}\label{solution4.1}\begin{gathered} \xymatrixcolsep{5pc}\xymatrixrowsep{1.5pc}\xymatrix{ A \times A \ar[r]^-{\langle 1, \overline{u} \rangle \times 1} \ar[ddrr]_-{\mathsf{D}[f]} & (A \times A) \times A\ar[r]^-{\mathsf{D}[f] \times 1} & A \times A \ar[dd]^-{\odot} \\   
\\
 && A } \end{gathered}\end{equation}
\end{lemma}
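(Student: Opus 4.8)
The plan is to unwind each of the three conditions into an equation in $\mathbb{X}$ and then observe that the equivalence reduces to a single substitution together with one application of the rig unit law. Reading off diagram (\ref{solution4}), $f$ being an $(A, \odot, u)$-solution of $(A, a_0, a_1)$ amounts to the two equations $0f = a_0$ and $\mathsf{D}[f] = (fa_1 \times 1)\odot$. Unwinding the morphism-of-dynamical-systems diagram (\ref{solution2}) with source $(A, 0, \overline{u})$ and target $(A, a_0, a_1)$, being an $(A, 0, \overline{u})$-solution amounts to $0f = a_0$ together with $\langle 1, \overline{u} \rangle \mathsf{D}[f] = fa_1$; finally, using $(\langle 1, \overline{u} \rangle \times 1)(\mathsf{D}[f] \times 1) = (\langle 1, \overline{u} \rangle \mathsf{D}[f]) \times 1$, diagram (\ref{solution4.1}) reads $\mathsf{D}[f] = (\langle 1, \overline{u} \rangle \mathsf{D}[f] \times 1)\odot$. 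Since the initial condition $0f = a_0$ occurs on both sides, it suffices to show, under this shared hypothesis, that $\mathsf{D}[f] = (fa_1 \times 1)\odot$ is equivalent to the conjunction of $\langle 1, \overline{u} \rangle \mathsf{D}[f] = fa_1$ and $\mathsf{D}[f] = (\langle 1, \overline{u} \rangle \mathsf{D}[f] \times 1)\odot$.

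For the backward implication I would simply substitute: assuming $\langle 1, \overline{u} \rangle \mathsf{D}[f] = fa_1$, the equation $\mathsf{D}[f] = (\langle 1, \overline{u} \rangle \mathsf{D}[f] \times 1)\odot$ becomes $\mathsf{D}[f] = (fa_1 \times 1)\odot$ immediately, which is precisely the $(A, \odot, u)$-solution condition.

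For the forward implication, I would first derive $\langle 1, \overline{u} \rangle \mathsf{D}[f] = fa_1$ from $\mathsf{D}[f] = (fa_1 \times 1)\odot$, and then (\ref{solution4.1}) follows by substituting this equation back into $\mathsf{D}[f] = (fa_1 \times 1)\odot$. The derivation of the first equation is the only real computation:
\[ \langle 1, \overline{u} \rangle \mathsf{D}[f] = \langle 1, \overline{u} \rangle (fa_1 \times 1)\odot = \langle fa_1, \overline{u} \rangle \odot = fa_1 \langle 1, 0u \rangle \odot = fa_1. \]
Here the factorization $\langle fa_1, \overline{u} \rangle = fa_1 \langle 1, 0u \rangle$ uses that $\overline{u} = 0u$ by (\ref{ubar}) and that $fa_1 \cdot 0 = 0$ by terminality of $\top$, while the last step invokes the rig unit law $\langle 1, 0u \rangle \odot = 1$ from (\ref{ostarmonoideq}).

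The proof is thus almost entirely bookkeeping, and the only step that requires care is the unit-law computation above -- in particular the factorization $\langle fa_1, \overline{u} \rangle = fa_1 \langle 1, 0u \rangle$ through the terminal object, which is what lets the constant map $\overline{u}$ be absorbed by the rig unit and recover $fa_1$.
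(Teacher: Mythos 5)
Your proof is correct and follows essentially the same route as the paper: both directions reduce to a substitution, and the one real computation, $\langle 1, \overline{u} \rangle (fa_1 \times 1)\odot = fa_1 \langle 1, 0u \rangle \odot = fa_1$ via the unit law of (\ref{ostarmonoideq}), is exactly the paper's key step (the paper just writes $fa_1 \times 1$ as $(f \times 1)(a_1 \times 1)$). Your explicit unwinding of the three diagrams into equations is sound, so nothing is missing.
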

\begin{proof} Suppose that $f$ is an $(A, 0, \overline{u})$-solution of $(A, a_0, a_1)$. We first show that $f$ is also a $(A, 0, \overline{u})$-solution of $(A, a_0, a_1)$. The top triangle of (\ref{solution2}) is precisely the left diagram of (\ref{solution4}). So it remains to show that $f$ also satisfies the bottom square of (\ref{solution2}): 
\begin{align*}
\langle 1, \overline{u} \rangle \mathsf{D}[f] &=~ \langle 1, \overline{u} \rangle (f \times 1)(a_1 \times 1)\odot \tag{\ref{solution4}} \\
&=~ f a_1 \langle 1, \overline{u} \rangle \odot \\
&=~ f a_1 \langle 1, 0u \rangle \odot \\
&=~ f a_1 \tag{\ref{ostarmonoideq}} 
\end{align*}
So $f$ is an $(A, 0, \overline{u})$-solution of $(A, a_0, a_1)$. As an immediate consequence, it follows that:
\begin{align*}
(\langle 1, \overline{u} \rangle \times 1)(\mathsf{D}[f] \times 1) \odot &=~(f \times 1)(a_1 \times 1) \odot \\
&=~ \mathsf{D}[f]  \tag{\ref{solution4}} 
\end{align*}
So $f$ also satisfies (\ref{solution4.1}). Conversely, suppose that $f$ is an $(A, 0, \overline{u})$-solution of $(A, a_0, a_1)$ which satisfies (\ref{solution4.1}). We must show that $f$ satisfies the two diagrams of (\ref{solution4}). As before, the left diagram of (\ref{solution4}) is the same as the top triangle of (\ref{solution2}). So it remains to show that $f$ also satisfies the right diagram of (\ref{solution4}):
\begin{align*}
\mathsf{D}[f] &=~(\langle 1, \overline{u} \rangle \times 1)(\mathsf{D}[f] \times 1) \odot   \tag{\ref{solution4.1}} \\
&=~(f \times 1)(a_1 \times 1) \odot \tag{\ref{solution2}} 
\end{align*}
So we conclude that $f$ is an $(A, \odot, u)$-solution of $(A, a_0, a_1)$. 
\end{proof} 

\begin{example}\label{smoothdynex2} \normalfont In $\mathsf{SMOOTH}$, consider the differential rig induced from the exponential function $e^x$ as defined in Example \ref{ex2}.i, that is, $\mathbb{R}$ with the standard multiplication of real numbers. Its canonical dynamical system as defined above is $(\mathbb{R}, 0, \overline{u_{e^x}})$ since $\overline{u_{e^x}}(x) = 1$. A $(\mathbb{R}, \odot_{e^x}, u_{e^x})$-solution of a dynamical system $(\mathbb{R}, a, g)$, where $a \in \mathbb{R}$ and $\mathbb{R} \xrightarrow{g} \mathbb{R}$, is a smooth function $\mathbb{R} \xrightarrow{f} \mathbb{R}$ such that: 
\begin{align*}
f(0) = a && \mathsf{D}[f](x,y) = f^\prime(x)y = g(f(x)) y 
\end{align*}
By Lemma \ref{sollem1}, setting $y = 1$, we see that $f$ is also a solution to the differential equation ${f^\prime(x) = f(g(x))}$ with initial value $f(0) = a$, or in other words, $f$ is a $(\mathbb{R}, 0, \overline{u_{e^x}})$-solution of $(\mathbb{R}, a, g)$ such that $f$ satisfies (\ref{solution4.1}):
\[\mathsf{D}[f](x,y) = \mathsf{D}[f](x,1)y\]
In fact, note that every arbitrary smooth function $\mathbb{R} \xrightarrow{f} \mathbb{R}$ satisfies (\ref{solution4.1}) since:
\[ \mathsf{D}[f](x,y) = f^\prime(x)y = \mathsf{D}[f](x,1)y \]
Then in this case, every $(\mathbb{R}, 0, \overline{u_{e^x}})$-solution of $(\mathbb{R}, a, g)$ is also a $(\mathbb{R}, \odot_{e^x}, u_{e^x})$-solution of $(\mathbb{R}, a, g)$.
\end{example}

For a differential exponential rig, its differential exponential map is the solution to the dynamical system which generalizes the initial value problem $f^\prime(x) = f(x)$ with $f(0) = 1$. 

\begin{proposition}\label{esolprop} Let $(A, \odot, u, e)$ be a differential exponential rig. Then the differential exponential map $e$ is an $(A, \odot, u)$-solution of the dynamical system $(A, u, 1)$, and therefore $e$ is also a $(A, 0, \overline{u})$-solution of $(A, u, 1)$. 
\end{proposition}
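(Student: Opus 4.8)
The plan is to verify directly that $e$ satisfies the two commuting diagrams of (\ref{solution4}) defining an $(A, \odot, u)$-solution of the dynamical system $(A, u, 1)$, using only the differential exponential rig axioms (\ref{des}), and then to obtain the final clause as an immediate application of Lemma \ref{sollem1}.

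First I would unpack the definition of an $(A, \odot, u)$-solution specialized to the dynamical system $(A, u, 1)$, whose point is $a_0 = u$ and whose endomorphism is $a_1 = 1$. The left diagram of (\ref{solution4}) then asks precisely that $0e = u$, which is the middle axiom of (\ref{des}). The right diagram of (\ref{solution4}) asks that $(e \times 1)(1 \times 1)\odot = \mathsf{D}[e]$; since $1 \times 1 = 1$ this reduces to $(e \times 1)\odot = \mathsf{D}[e]$, which is exactly the leftmost axiom of (\ref{des}). Thus both required identities are already among the defining properties of the differential exponential rig $(A, \odot, u, e)$, and we conclude that $e$ is an $(A, \odot, u)$-solution of $(A, u, 1)$.

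For the final assertion, I would invoke Lemma \ref{sollem1}, whose ``only if'' direction states that any $(A, \odot, u)$-solution of a dynamical system is in particular an $(A, 0, \overline{u})$-solution of that same system. Applying this to the solution $e$ just established immediately yields that $e$ is also an $(A, 0, \overline{u})$-solution of $(A, u, 1)$.

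I do not expect any genuine obstacle here: the content is that the two defining conditions of a solution coincide, for this particular dynamical system, with two of the three differential exponential rig axioms. The only points requiring care are correctly matching the data $a_0 = u$ and $a_1 = 1$ of $(A, u, 1)$ to the left and right diagrams of (\ref{solution4}), and observing that $a_1 = 1$ makes the factor $a_1 \times 1$ in the right diagram the identity, so that the remaining identity is literally the leftmost axiom of (\ref{des}).
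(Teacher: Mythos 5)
Your proposal is correct and follows essentially the same route as the paper's own proof: both observe that, after specializing the data $a_0 = u$ and $a_1 = 1$, the two diagrams of (\ref{solution4}) become literally the middle and leftmost axioms of (\ref{des}), and then invoke Lemma \ref{sollem1} for the final clause. Your explicit remark that $a_1 \times 1$ collapses to the identity is a slightly more careful unpacking of the same observation the paper makes implicitly.
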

\begin{proof} The left diagram of (\ref{solution4}) is precisely the middle diagram of (\ref{des}) that $0e=u$. While the right diagram of (\ref{solution4}) is precisely the left diagram of (\ref{des}) that $\mathsf{D}[e] = (e \times 1) \odot$. Therefore, $e$ is an $(A, \odot, u)$-solution of $(A, u, 1)$. By Lemma \ref{sollem1}, it follows that $e$ is also a $(A, 0, \overline{u})$-solution of $(A, u, 1)$. 
\end{proof} 

\begin{example} \normalfont In $\mathsf{SMOOTH}$, $\mathbb{R} \xrightarrow{e^x} \mathbb{R}$ is an $(\mathbb{R}, \odot_{e^x}, u_{e^x})$-solution of $(\mathbb{R}, u, 1)$. In other words, $e^x$ is a solution to the following initial value problem: 
\[ \mathsf{D}[f](x,y) = f(x)y \quad \quad \quad f(0) = 1 \]
In fact, $e^x$ is the unique $(\mathbb{R}, \odot_{e^x}, u_{e^x})$-solution of $(\mathbb{R}, u, 1)$. By Proposition \ref{esolprop}, setting $y=1$, $e^x$ is also a $(\mathbb{R}, 0, \overline{u_{e^x}})$-solution of $(\mathbb{R}, u, 1)$, that is, $e^x$ is the unique solution to the initial value problem $f^\prime(x) = f(x)$ with $f(0) = 1$. 
\end{example}

Note, however, that Proposition \ref{esolprop} does not say that a differential exponential map is a unique solution. Indeed, in an arbitrary Cartesian differential category, solutions of dynamical systems need not be unique, and so it is possible that for a differential rig $(A, \odot, u)$, there are multiple $(A, 0, \overline{u})$-solutions of $(A, u, 1)$. Furthermore, as discussed in Section \ref{dessec}, a $(A, 0, \overline{u})$-solution of $(A, u, 1)$ is not necessarily a differential exponential map since $\oplus e = (e \times e) \odot$ does not necessarily hold. That said, as we will see in Proposition \ref{uniqueprop}, with the extra assumption that solutions be unique, then it follows that a $(A, 0, \overline{u})$-solution of $(A, u, 1)$ is, in this case, a differential exponential map. To do so, we first discuss the notion of solutions of \emph{parametrized} dynamical systems. 

\begin{definition} In a Cartesian differential category, 
\begin{enumerate}[{\em (i)}]
\item A \textbf{parametrized dynamical system} (over $X$ or in context $X$) is a triple $(B, b_0, b_1)$ consisting of an object $B$, a map $X \xrightarrow{b_0} B$, and an endomorphism $B \xrightarrow{b_1} B$;
\item If $(A, a_0, a_1)$ is a dynamical system and $(B, b_0, b_1)$ a parametrized dynamical system, then a \textbf{parametrized $(A, a_0, a_1)$-solution of $(B, b_0, b_1)$} is a map $A \times X \xrightarrow{f} B$ such that the following diagram commutes:
 \begin{equation}\label{solution1}\begin{gathered} \xymatrixcolsep{5pc}\xymatrix{ & X \ar[dr]^-{b_0} \ar[dl]_-{\langle 0a_0, 1 \rangle} \\
   A \times X \ar[rr]^-{f} \ar[d]_-{\left \langle \langle \pi_0, \pi_1 \rangle, \langle \pi_1 a_1, 0 \rangle \right \rangle} & & B \ar[d]^-{b_1} \\
   (A \times X) \times (A \times X) \ar[rr]_-{\mathsf{D}[f]} && B
    } \end{gathered}\end{equation}
 \item For a dynamical system $(A, a_0, a_1)$, an endomorphism $B \xrightarrow{b_1} B$ is said to be \textbf{$(A, a_0, a_1)$-complete} if for every map $X \xrightarrow{b_0} B$, there is an $(A, a_0, a_1)$-solution of the parametrized dynamical system $(B, b_0, b_1)$. 
\end{enumerate}
\end{definition}

Note that dynamical systems can be described as parametrized dynamical systems over the terminal object $\top$ and in this case (\ref{solution2}) is the same as (\ref{solution1}), modulo the isomorphism $A \cong A \times \top$. 

\begin{example}\label{smoothdynex3} \normalfont In $\mathsf{SMOOTH}$, a parametrized dynamical system is simply a triple $(\mathbb{R}^m, K, G)$ with smooth functions $\mathbb{R}^k \xrightarrow{K} \mathbb{R}^m$ and $\mathbb{R}^m \xrightarrow{G} \mathbb{R}^m$. If $(\mathbb{R}^n, \vec a, H)$ is a dynamical system, then a parametrized $(\mathbb{R}^n, \vec a, H)$-solution of $(\mathbb{R}^m, K, G)$ is a smooth function $\mathbb{R}^n \times \mathbb{R}^k \xrightarrow{F} \mathbb{R}^m$ such that: 
\begin{align*}
F(\vec a, \vec y) = K(\vec y) && \mathsf{D}[F] \left( (\vec x, \vec y, K(\vec y), \vec 0 \right) = G(F(\vec x, \vec y)) 
\end{align*}
As a particular example, let $m=1$ with smooth functions $\mathbb{R}^k \xrightarrow{h} \mathbb{R}$ and $\mathbb{R} \xrightarrow{g} \mathbb{R}$. A parametrized $(\mathbb{R}, 0, \overline{u_{e^x}})$-solution of $(\mathbb{R}, h, g)$ is a smooth function $\mathbb{R} \times \mathbb{R}^k \xrightarrow{f} \mathbb{R}$ such that: 
\begin{align*}
f(0, y_1, \hdots, y_k) &= h(y_1, \hdots, y_k)\\
 \frac{\partial f(t_0, t_1, \hdots, t_n)}{\partial t_0} (x, y_1, \hdots, y_k) &= g\left(f(x, y_1, \hdots, y_k)\right)
\end{align*}
\end{example}

As before, in the case of a differential rig, one can also ask for parametrized solutions to be compatible with the rig multiplication. 

\begin{definition} Let $(A, \odot, u)$ be a differential rig and $(A, a_0, a_1)$ be a para\-metrized dynamical system over $X$. 
\begin{enumerate}[{\em (i)}]
\item A \textbf{parametrized $(A, \odot, u)$-solution} of $(A, a_0, a_1)$ is a map $A \times X \xrightarrow{f} A$ such that the following diagrams commute: 
 \begin{equation}\label{solution5}\begin{gathered} \xymatrixcolsep{1.5pc}\xymatrixrowsep{1.5pc}\xymatrix{ X \ar[r]^-{\langle 0, 1 \rangle} \ar[dr]_-{a_0} & A \times X \ar[d]^-{f} & (A \times X) \times A \ar[r]^-{f \times 1} \ar[dd]_-{(1 \times 1) \times \langle 1, 0 \rangle} & A \times A\ar[r]^-{a_1 \times 1} & A \times A \ar[dd]^-{\odot} \\   
 & A \\
 & & (A \times X) \times (A \times X) \ar[rr]_-{\mathsf{D}[f]} && A }\end{gathered}\end{equation}
 \item An endomorphism $A \xrightarrow{a_1} A$ is \textbf{$(A, \odot, u)$-complete} if for every map $X \xrightarrow{a_0} A$, there is a parametrized $(A, \odot, u)$-solution of the parametrized dynamical system $(A, a_0, a_1)$. 
 \end{enumerate}
\end{definition}

\begin{lemma}\label{sollem2} Let $(A, \odot, u)$ be a differential rig.
\begin{enumerate}[{\em (i)}]
\item Let $(A, a_0, a_1)$ be a parametrized dynamical system over $X$. Then a map ${A \times X \xrightarrow{f} A}$ is a parametrized $(A, \odot, u)$-solution of $(A, a_0, a_1)$ if and only if $f$ is a parametrized $(A, 0, \overline{u})$-solution of $(A, a_0, a_1)$ such that the following diagram commutes: 
 \begin{equation}\label{solution5.1}\begin{gathered} \xymatrixcolsep{2.5pc}\xymatrixrowsep{1.5pc}\xymatrix{ (A \times X) \times A \ar[rr]^-{\left\langle 1 \times 1, \overline{u} \times 0 \right\rangle \times 1} \ar[dd]_-{(1 \times 1) \times \langle 1, 0 \rangle} && \left( (A \times X) \times (A \times X) \right) \times A\ar[d]^-{\mathsf{D}[f] \times 1} \\
 && A \times A \ar[d]^-{\odot} \\   
(A \times X) \times (A \times X) \ar[rr]_-{\mathsf{D}[f]} && A }\end{gathered}\end{equation}
\item If an endomorphism $A \xrightarrow{a_1} A$ is $(A, \odot, u)$-complete then $a_1$ is $(A, 0, \overline{u})$-complete.
\end{enumerate}
\end{lemma}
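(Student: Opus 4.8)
The plan is to prove part (i) by transporting the diagram chase of Lemma~\ref{sollem1} into the parametrized setting, and then to read off part (ii) as an immediate corollary. Throughout, the sole nontrivial algebraic input is the monoid unit law of (\ref{ostarmonoideq}), now applied with the context object $X$ carried along; the rest is wiring.

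For the forward direction of (i), suppose $f$ is a parametrized $(A,\odot,u)$-solution of $(A,a_0,a_1)$, so both diagrams of (\ref{solution5}) commute. The left triangle of (\ref{solution5}) is exactly the top triangle required of a parametrized $(A,0,\overline{u})$-solution, so it remains to verify the bottom square of (\ref{solution1}) with source $(A,0,\overline{u})$ and target $(A,a_0,a_1)$. I would factor the relevant wiring map as $\langle 1\times 1,\overline{u}\times 0\rangle = \langle 1,\pi_0\overline{u}\rangle\bigl((1\times 1)\times\langle 1,0\rangle\bigr)$, apply the right diagram of (\ref{solution5}) to rewrite $\bigl((1\times 1)\times\langle 1,0\rangle\bigr)\mathsf{D}[f]$ as $(f\times 1)(a_1\times 1)\odot$, and then collapse $\langle fa_1,\pi_0\overline{u}\rangle\odot = fa_1$ via (\ref{ostarmonoideq}) (since $\pi_0\overline{u}=0u$ is the constant at the multiplicative unit). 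This mirrors the step $fa_1\langle 1,0u\rangle\odot=fa_1$ of Lemma~\ref{sollem1}. The auxiliary identity (\ref{solution5.1}) is then automatic: its right leg is $(f\times 1)(a_1\times 1)\odot$ by the right diagram of (\ref{solution5}), while its left leg, after substituting the bottom square just proved to simplify $\langle 1\times 1,\overline{u}\times 0\rangle\mathsf{D}[f]$ to $fa_1$, becomes $(fa_1\times 1)\odot=(f\times 1)(a_1\times 1)\odot$; the two legs agree.

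For the converse of (i), suppose $f$ is a parametrized $(A,0,\overline{u})$-solution satisfying (\ref{solution5.1}). The top triangle again supplies the left diagram of (\ref{solution5}) for free, so only the right diagram of (\ref{solution5}) remains. I would read (\ref{solution5.1}) from right to left to replace $\bigl((1\times 1)\times\langle 1,0\rangle\bigr)\mathsf{D}[f]$ by $\bigl(\langle 1\times 1,\overline{u}\times 0\rangle\times 1\bigr)(\mathsf{D}[f]\times 1)\odot$, then substitute the bottom square of the $(A,0,\overline{u})$-solution, namely $\langle 1\times 1,\overline{u}\times 0\rangle\mathsf{D}[f]=fa_1$, to obtain $(fa_1\times 1)\odot=(f\times 1)(a_1\times 1)\odot$, which is precisely the right diagram of (\ref{solution5}). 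This is the mirror image of the corresponding step in Lemma~\ref{sollem1}. Part (ii) then needs no further computation: by (i) every parametrized $(A,\odot,u)$-solution is in particular a parametrized $(A,0,\overline{u})$-solution, so if every $X\xrightarrow{a_0}A$ admits an $(A,\odot,u)$-solution of $(A,a_0,a_1)$, the very same maps witness $(A,0,\overline{u})$-completeness of $a_1$.

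I expect the only real difficulty to be bookkeeping rather than anything conceptual. Because the context $X$ rides alongside every arrow, the parametrized wiring maps such as $(1\times 1)\times\langle 1,0\rangle$ and $\langle 1\times 1,\overline{u}\times 0\rangle\times 1$ must be composed in the correct order and their action on the state, parameter, and tangent slots tracked with care; in particular one must repeatedly exploit that $\pi_0\overline{u}$ is the constant map at $u$ so that (\ref{ostarmonoideq}) applies. Once this factorization discipline is fixed, every step is a direct transcription of the unparametrized argument of Lemma~\ref{sollem1}.
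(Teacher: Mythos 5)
Your proposal is correct and takes essentially the same route as the paper: the paper's own proof of (i) simply states that it is "proved by similar calculations" to Lemma \ref{sollem1} (which you carry out explicitly, with the right factorization $\langle 1\times 1,\overline{u}\times 0\rangle = \langle 1,\pi_0\overline{u}\rangle\left((1\times 1)\times\langle 1,0\rangle\right)$ and the unit law (\ref{ostarmonoideq}) doing the work), and it obtains (ii) from (i) exactly as you do. The only blemish is that you swap the labels of the two legs of (\ref{solution5.1}) when checking it, but since both legs are correctly computed to equal $(f\times 1)(a_1\times 1)\odot$, this is harmless.
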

\begin{proof} Note that $(i)$ is a generalization of Lemma \ref{sollem1} and it is proved by similar calculations. Now suppose that an endomorphism $A \xrightarrow{a_1} A$ is $(A, \odot, u)$-complete. Then for every map $X \xrightarrow{a_0} A$, there is a parametrized $(A, \odot, u)$-solution of $(A, a_0, a_1)$, which is therefore also a parametrized $(A, 0, \overline{u})$-solution of $(A, a_0, a_1)$. So we conclude that $a_0$ is also $(A, 0, \overline{u})$-complete. 
\end{proof} 

\begin{example}\label{smoothdynex4} \normalfont In $\mathsf{SMOOTH}$, let $(\mathbb{R}, h, g)$ be a parametrized dynamical system over $\mathbb{R}^k$ with smooth functions $\mathbb{R}^k \xrightarrow{h} \mathbb{R}$ and $\mathbb{R} \xrightarrow{g} \mathbb{R}$. Then a parametrized $(\mathbb{R}, \odot_{e^x}, u_{e^x})$-solution of $(\mathbb{R}, h, g)$ is a smooth function $\mathbb{R} \times \mathbb{R}^k \xrightarrow{f} \mathbb{R}$ such that: 
\begin{align*}
f(0, y_1, \hdots, y_k) &= h(y_1, \hdots, y_k) \\
 \frac{\partial f(t_0, t_1, \hdots, t_n)}{\partial t_0} (x, y_1, \hdots, y_k)z &= g\left(f(x, y_1, \hdots, y_k)\right)z 
\end{align*}
By Lemma \ref{sollem2}, setting $z=1$, we have that $f$ is also a parametrized $(\mathbb{R}, 0, \overline{u_{e^x}})$-solution of $(\mathbb{R}, h, g)$. 
\end{example}

We wish to show that for a differential exponential rig, a certain class of linear endomorphisms are complete, that is, always have a parametrized solution. This corresponds to the fact that in the classical case, $e^x$ allows one to solve all initial value problems $f^\prime(x) = af(x)$ with initial condition $f(0) = b$ for any constants $a$ and $b$. In this case, the solution to this first order linear differential equation is $f(x) = e^{ax} b$. So the linear function which scalar multiplies by $a$ is complete. This can be generalized to the multivariable case, which is of interest in control systems theory \cite[Chapter 5]{astrom2010feedback}.  

\begin{definition} Let $(A, \odot, u)$ be a differential rig. For a point $\top \xrightarrow{a} A$, define the endomorphism $A \xrightarrow{\odot^a} A$ as multiplication by $a$, that is:
 \[ \odot^a := \xymatrixcolsep{5pc}\xymatrix{A \ar[r]^-{\langle 1, 0a\rangle} & A \times A \ar[r]^-{\odot} & A  
 } \]
\end{definition}

As seen in the example below, the map $\odot^a$ is simply multiplication on the right by the point $a$, which by commutativity of $\odot$ is the same as multiplying on the left. 
In the case of a differential exponential rig, we will show that $\odot^a$ is complete.  

\begin{example} \normalfont In $\mathsf{SMOOTH}$, for $a \in \mathbb{R}$, $\odot_{e^x}^a(x) = xa$. 
\end{example}

\begin{lemma}\label{flatlem} Let $(A, \odot, u)$ be a differential rig.
\begin{enumerate}[{\em (i)}]
\item $\odot^0 =0$ and $\odot^u = 1$; 
\item For every pair of points $\top \xrightarrow{a} A$ and $\top \xrightarrow{b} A$, $\odot^{a+b} = \odot^a + \odot^b$ and $\odot^a \odot^b = \odot^b \odot^a$;
\item For every point $\top \xrightarrow{a} A$, $\odot^a$ is linear and the following diagrams commute: 
   \begin{equation}\label{aflat}\begin{gathered} \xymatrixcolsep{5pc}\xymatrix{ A \times A \ar[r]^-{\odot} \ar[d]_-{1 \times \odot^a} & A \ar[d]^-{\odot^a} & \top \ar[dr]_-{a} \ar[r]^-{u} & A \ar[d]^-{\odot^a} \\
   A \times A \ar[r]_-{\odot} & A & & A 
    } \end{gathered}\end{equation}
\item For an endomorphism $A \xrightarrow{f} A$, $\odot^{uf} = f$ if and only if the following diagram, 
   \begin{equation}\label{modulemap}\begin{gathered} \xymatrixcolsep{5pc}\xymatrix{ A \times A \ar[r]^-{\odot} \ar[d]_-{1 \times f} & A \ar[d]^-{f} \\
   A \times A \ar[r]_-{\odot} & A 
    } \end{gathered}\end{equation}
\item For every point $\top \xrightarrow{a} A$, $\odot^{u\odot^a} = \odot^a$. 
\end{enumerate}
\end{lemma}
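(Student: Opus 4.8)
The plan is to establish the five parts in order, since the computational heart of the lemma is the linearity assertion in (iii), and every other part reduces to either the commutative-monoid laws (\ref{ostarmonoideq}) for $(A,\odot,u)$ or the bilinearity (\ref{odotbilin}) of $\odot$. For (i), I would note that in $\odot^{0}=\langle 1,0\rangle\odot$ the second (constant) argument is the zero map, so the composite vanishes because a bilinear map is reduced in each argument, whereas $\odot^{u}=\langle 1,0u\rangle\odot=1$ is literally the unit law of (\ref{ostarmonoideq}). For (ii), the identity $\odot^{a+b}=\odot^{a}+\odot^{b}$ comes from left-additivity $0(a+b)=0a+0b$ followed by additivity of $\odot$ in its second argument (equivalently the left distributivity square of (\ref{distributeeq})), while $\odot^{a}\odot^{b}=\odot^{b}\odot^{a}$ is just associativity together with commutativity of $\odot$ read off (\ref{ostarmonoideq}).

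Part (iii) is where the actual work sits. For linearity I would compute $\mathsf{D}[\odot^{a}]=\mathsf{D}\bigl[\langle 1,0a\rangle\odot\bigr]$ by the chain rule, the key input being that any point has vanishing derivative: $\mathsf{D}[a]=0$, since $\langle 1,0\rangle:\top\to\top\times\top$ is an isomorphism and $\langle 1,0\rangle\mathsf{D}[a]=0$ by \textbf{[CD.2]}, whence the constant $0a$ also has $\mathsf{D}[0a]=0$. Substituting the bilinearity formula (\ref{odotbilin}) for $\mathsf{D}[\odot]$ yields two summands; the one carrying a zero second argument dies by reducedness, and the remaining one becomes $\pi_{1}\odot^{a}$ after a single use of commutativity $\langle\pi_{1},\pi_{0}\rangle\odot=\odot$, giving $\mathsf{D}[\odot^{a}]=\pi_{1}\odot^{a}$. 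The two diagrams of (\ref{aflat}) are then quick: the left square $(1\times\odot^{a})\odot=\odot\,\odot^{a}$ is exactly associativity of $\odot$, and the right triangle follows from $u\odot^{a}=\langle u,a\rangle\odot=a\langle 0u,1\rangle\odot=a$ by the unit law of (\ref{ostarmonoideq}).

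Parts (iv) and (v) are then corollaries of (iii). For (iv), if $\odot^{uf}=f$ then $f$ is of the form $\odot^{b}$ with $b=uf$, so (\ref{modulemap}) for $f$ is precisely the left diagram of (\ref{aflat}) applied to $\odot^{b}$; conversely, assuming (\ref{modulemap}) I would rewrite $\odot^{uf}=\langle 1,0u\,f\rangle\odot=\langle 1,0u\rangle(1\times f)\odot=\langle 1,0u\rangle\odot\,f=\odot^{u}f=f$, using (\ref{modulemap}) and $\odot^{u}=1$ from (i). Finally, (v) is a one-line consequence of the right triangle of (\ref{aflat}): since $u\odot^{a}=a$ as points, the two multiplication-by-a-point maps coincide, i.e. $\odot^{u\odot^{a}}=\odot^{a}$ (equivalently, apply (iv) with $f=\odot^{a}$, whose instance of (\ref{modulemap}) is the left diagram of (\ref{aflat})). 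The single genuine obstacle is the linearity computation in (iii); once $\mathsf{D}[a]=0$ for points is secured, the rest is careful bookkeeping with the interchange of the projections and the bilinearity identity.
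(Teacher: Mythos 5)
Your proof is correct. Note that the paper itself offers no argument for this lemma --- its ``proof'' reads ``We leave these as an exercise for the reader to check for themselves'' --- so there is no official proof to compare against; your write-up supplies exactly the missing verification. The one genuinely nontrivial step, linearity of $\odot^a$ in (iii), is handled soundly: $\mathsf{D}[a]=0$ for points (via \textbf{[CD.2]} and the fact that $\top \to \top\times\top$ is an isomorphism), hence $\mathsf{D}[0a]=0$, then the chain rule \textbf{[CD.5]}, the bilinearity identity (\ref{odotbilin}), reducedness of $\odot$ in each argument, and one use of commutativity give $\mathsf{D}[\odot^a]=\pi_1\odot^a$; and your reductions of (iv) and (v) to (iii) via $u\odot^a = a$ and $\odot^u = 1$ are exactly right.
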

\begin{proof} We leave these as an exercise for the reader to check for themselves. 
\end{proof} 

\begin{proposition}\label{comlinprop} Let $(A, \odot, u, e)$ be a differential exponential rig. Then for every point ${\top \xrightarrow{a} A}$, the endomorphism $A \xrightarrow{\odot^a} A$ is $(A, \odot, u)$-complete. 
\end{proposition}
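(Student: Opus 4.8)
The plan is to exhibit an explicit parametrized solution, guided by the classical formula: the solution of $f^\prime(x) = a f(x)$ with $f(0) = b$ is $f(x) = e^{ax}b$. Given an arbitrary map $X \xrightarrow{b_0} A$, I would therefore define $A \times X \xrightarrow{f} A$ by
\[ f := \langle \pi_0 \odot^a e, \pi_1 b_0 \rangle \odot \]
(with $\pi_0 : A \times X \to A$ and $\pi_1 : A \times X \to X$), which in $\mathsf{SMOOTH}$ computes $f(x,y) = e^{xa} \odot b_0(y)$, and then show that $f$ is a parametrized $(A, \odot, u)$-solution of the parametrized dynamical system $(A, b_0, \odot^a)$, i.e.\ that $f$ satisfies the two diagrams of (\ref{solution5}) with $a_0 = b_0$ and $a_1 = \odot^a$. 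Since $X$ and $b_0$ are arbitrary, this establishes completeness of $\odot^a$.

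First I would verify the left (initial-condition) diagram, $\langle 0, 1 \rangle f = b_0$. Composing and projecting, the first factor becomes $0 \odot^a e$; since $\odot^a$ is linear (Lemma \ref{flatlem}) and hence additive (Lemma \ref{linlem}), we get $0 \odot^a = 0$, and factoring the zero map through $\top$ gives $0e = u$ by the middle diagram of (\ref{des}). The expression reduces to $\langle 0u, b_0 \rangle \odot$, which equals $b_0$ by the unit law (\ref{ostarmonoideq}). This part is routine.

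The substance is the right diagram, $(f \times 1)(\odot^a \times 1)\odot = \left( (1 \times 1) \times \langle 1, 0 \rangle \right)\mathsf{D}[f]$. I would compute $\mathsf{D}[f]$ using the chain rule {\bf [CD.5]}, the pairing rule {\bf [CD.4]}, and the bilinearity of $\odot$ (\ref{odotbilin}), which produces a sum of two terms. For the exponential factor I would use $\mathsf{D}[e] = (e \times 1)\odot$ (the left diagram of (\ref{des})) together with Lemma \ref{linlem}(ix), since $\pi_0 \odot^a$ is linear, to obtain $\mathsf{D}[\pi_0 \odot^a e] = \left( (\pi_0 \odot^a e) \times (\pi_0 \odot^a) \right)\odot$. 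The key simplification is that precomposition with $(1 \times 1) \times \langle 1, 0 \rangle$ forces the $X$-component of the derivative direction to be zero: the derivative of the parameter factor $\pi_1 b_0$ then vanishes by {\bf [CD.2]}, and the term of $\mathsf{D}[\odot]$ pairing it in also vanishes because $\langle s, 0 \rangle \odot = s \langle 1, 0 \rangle \odot = 0$ by the distributivity diagram (\ref{distributeeq}). What survives of the right-hand side reduces, via the relation $\odot\, \odot^a = (1 \times \odot^a)\odot$ of (\ref{aflat}), to a threefold product of the two factors of $\pi_0 f$ with the $\langle 1, 0 \rangle$-direction, scaled by $a$; the left-hand side reduces to the same threefold product after a single application of (\ref{aflat}), and commutativity and associativity of $\odot$ then identify the two sides.

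The hard part will be the derivative bookkeeping in the right diagram: correctly tracking which projections land where through the chain and bilinearity rules, and checking that exactly the parameter-direction contributions cancel (through {\bf [CD.2]} and $\langle s, 0 \rangle \odot = 0$). Once the surviving terms are isolated, matching them is a short computation using commutativity, associativity, and (\ref{aflat}). As $b_0$ was arbitrary, we conclude that $\odot^a$ is $(A, \odot, u)$-complete.
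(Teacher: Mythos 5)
Your proposal is correct and takes essentially the same approach as the paper: the paper's candidate solution $(\odot^a \times a_0)\mathsf{D}[e] = (\odot^a \times a_0)(e \times 1)\odot$ is exactly your map $\langle \pi_0 \odot^a e, \pi_1 b_0 \rangle \odot$, and it discharges the same two obligations of (\ref{solution5}), handling the initial condition via additivity of $\odot^a$ just as you do. The only divergence is bookkeeping in the derivative check: the paper routes through the tangent functor, \textbf{[CD.7]}, and the identity (\ref{D2e}) established in the proof of Theorem \ref{isothm}, whereas you differentiate the product form directly via \textbf{[CD.5]}, bilinearity (\ref{odotbilin}), and Lemma \ref{linlem}(ix) -- an equivalent computation, since (\ref{D2e}) is itself just bilinearity repackaged.
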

\begin{proof} Let $\top \xrightarrow{a} A$ be point and let $X \xrightarrow{a_0} A$ be an arbitrary map. Then $(A, a_0, \odot^a)$ is a parametrized dynamical system over $X$. Now consider the following composite: 
 \[ \xymatrixcolsep{5pc}\xymatrix{A \times X \ar[r]^-{\odot^a \times a_0} & A \times A \ar[r]^-{\mathsf{D}[e]} & A
 } \]
which by (\ref{des}) is equal to:
\[ (\odot^a \times a_0) \mathsf{D}[e] = (\odot^a \times a_0)(e \times 1)\odot \]
We need to show both equalities of (\ref{solution5}). Starting with the left identity of (\ref{solution5}), since by Lemma \ref{flatlem} $\odot^a$ is linear it is also additive, we have that:
\begin{align*}
 \langle 0, 1 \rangle (\odot^a \times a_0) \mathsf{D}[e] &=~ \langle 0 \odot^a, a_0 \rangle \mathsf{D}[e] \\
 &=~ \langle 0, a_0 \rangle \mathsf{D}[e] \tag{$\odot^a$ is additive} \\ 
&=~ a_0 \langle 0,1 \rangle \mathsf{D}[e] \\
&=~ a_0 \tag{\ref{dem}}
\end{align*}
And for the right identity of (\ref{solution5}): 
\begin{align*}
&\left( (1 \times 1) \times \langle 1, 0 \rangle \right) \mathsf{D}\left[(\odot^a \times a_0) \mathsf{D}[e] \right] \\
&=~\left( (1 \times 1) \times \langle 1, 0 \rangle \right) \mathsf{T}(\odot^a \times a_0) \mathsf{D}^2\left[ e \right] \tag{Lemma \ref{tanlem}} \\
&=~ \left( (1 \times 1) \times \langle 1, 0 \rangle \right) \mathsf{T}(\odot^a \times a_0) c \mathsf{D}^2\left[ e \right] \tag{\bf[CD.7]} \\
&=~ \left( (1 \times 1) \times \langle 1, 0 \rangle \right) c \left( \mathsf{T}(\odot^a) \times \mathsf{T}(a_0) \right) \mathsf{D}^2\left[ e \right] \tag{Lemma \ref{tanlem}} \\
&=~ \langle \pi_0 \times 1, \pi_0 \pi_1 \rangle \left( (1 \times 1) \times \langle 1, 0 \rangle \right) \left( \mathsf{T}(\odot^a) \times \mathsf{T}(a_0) \right) \mathsf{D}^2\left[ e \right] \\
&=~ \langle \pi_0 \times 1, \pi_0 \pi_1 \rangle \left( \mathsf{T}(\odot^a) \times a_0 \right) \left( (1 \times 1) \times \langle 1, 0 \rangle \right) \mathsf{D}^2\left[ e \right] \tag{Lemma \ref{tanlem}} \\
&=~ \langle \pi_0 \times 1, \pi_0 \pi_1 \rangle \left( \mathsf{T}(\odot^a) \times a_0 \right) \left( (1 \times 1) \times \langle 1, 0 \rangle \right) (\pi_0 \times \pi_1) \mathsf{D}[e] \\
&~~~+ \langle \pi_0 \times 1, \pi_0 \pi_1 \rangle \left( \mathsf{T}(\odot^a) \times a_0 \right)\left( (1 \times 1) \times \langle 1, 0 \rangle \right) (\mathsf{D}[e] \times \pi_0) \odot \tag{\ref{D2e}}\\
&=~ \langle \pi_0 \times 1, \pi_0 \pi_1 \rangle \left( \mathsf{T}(\odot^a) \times a_0 \right) (\pi_0 \times 0) \mathsf{D}[e] \\
&~~~+ \langle \pi_0 \times 1, \pi_0 \pi_1 \rangle \left( \mathsf{T}(\odot^a) \times a_0 \right) (\mathsf{D}[e] \times 1) \odot \\
&=~ 0 + \langle \pi_0 \times 1, \pi_0 \pi_1 \rangle \left( \mathsf{T}(\odot^a) \times a_0 \right) (\mathsf{D}[e] \times 1) \odot \tag{{\bf[CD.2]}} \\
&=~ \langle \pi_0 \times 1, \pi_0 \pi_1 \rangle \left((\odot^a \times \odot^a) \times a_0 \right)((e \times 1) \times 1) (\odot \times 1) \odot \tag{Lemma \ref{flatlem} + Lemma \ref{tanlem} + (\ref{des})} \\
&=~ ((\odot^a \times a_0) \times 1) ((e\times 1) \times 1) (\odot \times 1)(\odot^a \times 1) \odot \tag{Lemma \ref{flatlem}} \\
&=~((\odot^a \times a_0) \times 1) (\mathsf{D}[e] \times 1)(\odot^a \times 1) \odot \tag{\ref{des}} 
\end{align*}
So we conclude that $(\odot^a \times a_0) \mathsf{D}[e]$ is an $(A, \odot, u)$-solution of $(A, a_0, \odot^a)$ and therefore that $\odot^a$ is $(A, \odot, u)$-complete. 
\end{proof}

\begin{example} \normalfont In $\mathsf{SMOOTH}$, for every point $a \in \mathbb{R}$ and a smooth function $\mathbb{R}^m \xrightarrow{h} \mathbb{R}$, the smooth function $\mathbb{R} \times \mathbb{R}^m \xrightarrow{f} \mathbb{R}$ defined as $f(x, \vec y) = e^{x a}h(\vec y)$ is a parametrized $(\mathbb{R}, \odot_{e^x}, u_{e^x})$-solution of the parametrized dynamical system $(\mathbb{R}, \odot^a_{e^x}, h)$, that is:  
\begin{align*}
 \frac{\partial f(t, \vec u)}{\partial t}(x, \vec y) z = a f(x, \vec y) z && f(0, \vec y) = h(\vec y)
\end{align*}
Setting $z=1$, it follows that $f$ is also a solution to the differential equation:
\begin{align*}
\frac{\partial f(t, \vec u)}{\partial t}(x, \vec y) = a f(x, \vec y) &&  f(0, \vec y) = h(\vec y)
\end{align*}
\end{example}

We would now like to prove the ``converse'' of Proposition \ref{esolprop}, that is, we would like to obtain differential exponential maps as solutions to certain dynamical systems. To do so, we will require the extra assumption that solutions are unique, which is necessary to prove that $\oplus e= (e \times e) \odot$. 

\begin{proposition}\label{uniqueprop} Let $(A, \odot, u)$ be a differential rig and suppose that:
\begin{enumerate}[{\em (i)}]
\item Parametrized $(A, \odot, u)$-solutions are unique if they exists, that is, if both $f$ and $g$ are parametrized $(A, \odot, u)$-solutions of a parametrized dynamical system $(A, a_0, a_1)$, then $f= g$
\item The dynamical system $(A, u, 1)$ has an $(A, \odot, u)$-solution $e$. 
\end{enumerate}
Then $(A, \odot, u, e)$ is a differential exponential rig. 
\end{proposition}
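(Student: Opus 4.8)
The plan is to observe that two of the three defining diagrams of (\ref{des}) come essentially for free from hypothesis (ii), and then to obtain the third --- the crucial identity $\oplus e = (e \times e)\odot$ --- by a categorical rendering of the classical argument that $e^{x+y}$ and $e^x e^y$ solve the same initial value problem, with uniqueness (hypothesis (i)) forcing them to coincide.

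First I would unwind what it means for $e$ to be an $(A, \odot, u)$-solution of $(A, u, 1)$. The left diagram of (\ref{solution4}) gives exactly $0e = u$, which is the middle diagram of (\ref{des}), and the right diagram of (\ref{solution4}), with $a_1 = 1$, reads $\mathsf{D}[e] = (e \times 1)\odot$, which is the left diagram of (\ref{des}). Thus two of the three differential exponential rig axioms hold at once, and only $\oplus e = (e \times e)\odot$ remains to be proven.

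For the remaining identity I would fix the parametrized dynamical system $(A, e, 1)$ over the parameter object $X = A$, and show that the two maps $\oplus e \colon A \times A \to A$ and $(e \times e)\odot \colon A \times A \to A$ are both parametrized $(A, \odot, u)$-solutions of it; hypothesis (i) then gives $\oplus e = (e \times e)\odot$. For $\oplus e$, the left condition of (\ref{solution5}) reduces to $\langle 0, 1 \rangle \oplus = 1$, so that $\langle 0,1\rangle \oplus e = e$, and the right condition follows from the linearity of $\oplus$ (Lemma \ref{opluslem}, Lemma \ref{linlem}) together with $\mathsf{D}[e] = (e \times 1)\odot$, after the simplification $\left((1 \times 1) \times \langle 1, 0 \rangle\right)(\oplus \times \oplus) = \oplus \times 1$, which turns the right-hand side of (\ref{solution5}) into $(\oplus e \times 1)\odot$. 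For $(e \times e)\odot$, the left condition uses $0e = u$ together with the unit law in (\ref{ostarmonoideq}) to compute $\langle 0, 1 \rangle (e \times e)\odot = e$.

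The main obstacle is the right condition of (\ref{solution5}) for $(e \times e)\odot$. Here I would first record a Leibniz rule for the bilinear map $\odot$: for $g, h \colon C \to A$, the chain rule (Lemma \ref{tanlem}) together with bilinearity (\ref{odotbilin}) yields
\[ \mathsf{D}[\langle g, h \rangle \odot] = \langle \pi_0 g, \mathsf{D}[h] \rangle \odot + \langle \pi_0 h, \mathsf{D}[g] \rangle \odot. \]
Applying this to $e \times e = \langle \pi_0 e, \pi_1 e \rangle$ and substituting $\mathsf{D}[e] = (e \times 1)\odot$, the derivative $\mathsf{D}[(e \times e)\odot]$ taken in the tangent direction $\langle 1, 0 \rangle$ --- that is, precomposed with $(1 \times 1) \times \langle 1, 0 \rangle$ --- splits into two summands, one of which vanishes because the tangent component feeding that factor is $0$ and $\odot$ is additive in each argument (so that $a \odot 0 = 0$). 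The surviving summand is $e(y) \odot \left( e(x) \odot z \right)$, which equals the desired right-hand side $\left( e(x) \odot e(y) \right) \odot z$ by the associativity and commutativity of the monoid $\odot$ in (\ref{ostarmonoideq}). This establishes the right condition, so both maps are genuine parametrized $(A, \odot, u)$-solutions of $(A, e, 1)$; uniqueness then delivers $\oplus e = (e \times e)\odot$, and with the two axioms already in hand we conclude that $(A, \odot, u, e)$ is a differential exponential rig.
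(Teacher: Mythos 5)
Your proposal is correct and follows essentially the same route as the paper: two of the three axioms ($0e = u$ and $\mathsf{D}[e] = (e \times 1)\odot$) are read off directly from hypothesis (ii), and the remaining identity $\oplus e = (e \times e)\odot$ is obtained by showing that both maps are parametrized $(A, \odot, u)$-solutions of the parametrized dynamical system $(A, e, 1)$ and then invoking the uniqueness hypothesis (i). The only difference is presentational: you isolate a Leibniz rule for the bilinear multiplication $\odot$ and argue pointwise, whereas the paper performs the equivalent computation inline via the tangent functor and bilinearity (\ref{odotbilin}), in both cases killing one summand by \textbf{[CD.2]} and identifying the surviving one using commutativity and associativity of $\odot$.
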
 
\begin{proof} We must show that $e$ satisfies the three identities of (\ref{des}). By definition of $e$ being an $(A, \odot, u)$-solution of $(A, u, 1)$, $0e=u$ and $\mathsf{D}[e] = (e \times 1) \odot$. So it remains to show that $\oplus e= (e \times e) \odot$. To do so, we will show that $\oplus e$ and $(e \times e)\odot$ are both parametrized $(A, \odot, u)$-solutions of the parametrized dynamical system $(A, e, 1)$. Starting with $\oplus e$: 
\begin{align*}
\langle 0, 1 \rangle \oplus e &=e \tag{\ref{ostarmonoideq}} \end{align*}
\begin{align*}
\left( (1 \times 1) \times \langle 1, 0 \rangle \right) \mathsf{D}[\oplus e] &=~ \left( (1 \times 1) \times \langle 1, 0 \rangle \right)(\oplus \times \oplus) \mathsf{D}[e] \tag{Lemma \ref{opluslem} + Lemma \ref{linlem}} \\
&=~ (\oplus \times 1) \mathsf{D}[e] \tag{\ref{ostarmonoideq}} \\
&=~ (\oplus \times 1) (e \times 1) \odot \tag{Assumption (i) + (\ref{solution4})} 
\end{align*}
Next we work with $(e \times e)\odot$: 
\begin{align*}
\langle 0,1 \rangle (e \times e) \odot &=~\langle 0e, e \rangle \odot \\
&=~ \langle u, e \rangle \odot \tag{Assumption (ii) + (\ref{solution4})} \\
&=~ e \tag{\ref{ostarmonoideq}}
\end{align*} 
\begin{align*}
&\left( (1 \times 1) \times \langle 1, 0 \rangle \right) \mathsf{D}[(e \times e) \odot] =~ \left( (1 \times 1) \times \langle 1, 0 \rangle \right)\mathsf{T}(e \times e) \mathsf{D}[\odot] \tag{Lemma \ref{tanlem}} \\
&=~ \left( (1 \times 1) \times \langle 1, 0 \rangle \right) c (\mathsf{T}(e) \times \mathsf{T}(e) ) c \mathsf{D}[\odot] \tag{Lemma \ref{tanlem}} \\
&=~\langle \pi_0 \times 1, \pi_0 \pi_1 \rangle \left( (1 \times 1) \times \langle 1, 0 \rangle \right) (\mathsf{T}(e) \times \mathsf{T}(e) ) c \mathsf{D}[\odot] \\
&=~ \langle \pi_0 \times 1, \pi_0 \pi_1 \rangle (\mathsf{T}(e) \times e) \left( (1 \times 1) \times \langle 1, 0 \rangle \right) c \mathsf{D}[\odot] \tag{Lemma \ref{tanlem}} \\
&=~ \langle \pi_0 \times 1, \pi_0 \pi_1 \rangle (\mathsf{T}(e) \times e) \langle \pi_0 \times 1, \pi_0 \pi_1 \rangle \left( (1 \times 1) \times \langle 1, 0 \rangle \right) \mathsf{D}[\odot] \\
&=~ \langle \pi_0 \times 1, \pi_0 \pi_1 \rangle \langle e \times e, \mathsf{D}[e] \rangle \left( (1 \times 1) \times \langle 1, 0 \rangle \right) \mathsf{D}[\odot] \\
&=~ \langle \pi_0 \times 1, \pi_0 \pi_1 \rangle \langle e \times e, \mathsf{D}[e] \rangle \left( (1 \times 1) \times \langle 1, 0 \rangle \right) (\pi_0 \times \pi_1) \odot \\
&~~~+ \langle \pi_0 \times 1, \pi_0 \pi_1 \rangle \langle e \times e, \mathsf{D}[e] \rangle \left( (1 \times 1) \times \langle 1, 0 \rangle \right) (\pi_1 \times \pi_0) \odot \tag{\ref{odotbilin}} \\
&=~ \langle \pi_0 \times 1, \pi_0 \pi_1 \rangle \langle e \times e, \mathsf{D}[e] \rangle (\pi_0 \times 0) \odot + \langle \pi_0 \times 1, \pi_0 \pi_1 \rangle \langle e \times e, \mathsf{D}[e] \rangle (\pi_1 \times 1) \odot \\
&=~ 0 + \langle \pi_0 \times 1, \pi_0 \pi_1 \rangle \langle \pi_1 e, \mathsf{D}[e] \rangle \odot \tag{\ref{distributeeq}} \\
&=~ \left \langle \pi_1 e, \langle \pi_0 \times 1, \pi_0 \pi_1 \rangle \mathsf{D}[e] \right \rangle \odot \\
&=~ \left \langle \pi_1 e, \langle \pi_0 \times 1, \pi_0 \pi_1 \rangle (e \times 1) \odot \right \rangle \odot \tag{Assumption (ii) + (\ref{solution4})} \\
&=~((e \times e) \times 1)(\odot \times 1)\odot \tag{\ref{ostarmonoideq}} \\ 
\end{align*}
So we have that $\oplus e$ and $(e \times e)\odot$ are both parametrized $(A, \odot, u)$-solutions of the parametrized dynamical system $(A, e, 1)$. However by assumption (ii), solutions are unique and therefore we have that $\oplus e= (e \times e)\odot$. And so we conclude that $(A, \odot, u, e)$ is a differential exponential rig. \end{proof} 

\section{Differential Exponential Maps for Differential Categories} \label{diffexpsec}

An interesting and important source of Cartesian differential categories are coKleisli categories of differential categories \cite{blute2009cartesian,blute2006differential}. In this section, we study differential exponential maps in the coKleisli category of a differential (storage) category. We also introduce $\oc$-differential exponential algebras and show that these are in bijective correspondence with differential exponential maps in the coKleisli category of a differential storage category. 

If only to introduce notation, we first briefly review the full definition of a differential category. Here we present the definition of differential category found in \cite{Blute2019}, which is mostly the same as the one found in the original paper \cite{blute2006differential} but with the addition of the interchange rule, which has now become part of the definition. Also, for simplicity and following the convention of other differential category papers, in this section we allow ourselves to work in \emph{strict} monoidal categories, that is, the associator and the unitor of the tensor product $\otimes$ are strict equalities. For a symmetric monoidal category, we let $K$ be the monoidal unit and $\sigma: A \otimes B \to B \otimes A$ be the natural symmetry isomorphism. 

\begin{definition} A \textbf{differential category} \cite[Definition 2.4]{blute2006differential} consists of: 
\begin{enumerate}[{\em (i)}]
\item An \textbf{additive symmetric monoidal category} \cite[Definition 3]{Blute2019} which is a symmetric monoidal category $\mathbb{X}$ which is enriched over commutative monoids, that is, each hom-set $\mathbb{X}(A,B)$ is a commutative monoid with addition $\mathbb{X}(A,B) \times \mathbb{X}(A,B) \xrightarrow{+} \mathbb{X}(A,B)$, $(f,g) \mapsto f +g$, and zero map $0 \in \mathbb{X}(A,B)$, such that composition and the tensor product preserves the additive structure, that is, the following equalities hold: 
   \begin{equation}\label{add2}\begin{gathered} f(g+h)k=fgk+fhk \quad \quad \quad f0g=0 
    \end{gathered}\end{equation}
       \begin{equation}\label{add3}\begin{gathered} f \otimes (g+h) \otimes k = (f \otimes g \otimes k) + (f \otimes h \otimes k) \quad \quad \quad f \otimes 0 \otimes g = 0
    \end{gathered}\end{equation}
\item Equipped with a \textbf{coalgebra modality} \cite[Definition 1]{Blute2019}, which is a quintuple $(\oc, \delta, \varepsilon, \Delta, \iota)$ consisting of an endofunctor $\oc$ and natural transformations $\oc A \xrightarrow{\delta} \oc \oc A$, $\oc A \xrightarrow{\varepsilon} A$, $\oc A \xrightarrow{\Delta} \oc A \otimes \oc A$, and $\oc A \xrightarrow{\iota} K$, such that $(\oc, \delta, \varepsilon)$ is a comonad on $\mathbb{X}$ and $(\oc A, \Delta, \iota)$ is a cocommutative comonoid, that is, the following equalities hold:
\begin{equation}\label{comonoid}\begin{gathered} 
\Delta(\Delta \otimes 1) = \Delta(1 \otimes \Delta) \quad \quad \Delta(1 \otimes \iota) = 1 = \Delta(\iota \otimes 1) \quad \quad \Delta \sigma = \Delta
\end{gathered}\end{equation}
 and $\delta$ is a comonoid morphism, that is, the following equalities hold: 
 \begin{equation}\label{deltaeq}\begin{gathered} 
 \delta \Delta = \Delta (\delta \otimes \delta) \quad \quad \quad \delta \iota = \iota 
  \end{gathered}\end{equation}
 \item Equipped with a \textbf{deriving transformation} \cite[Definition 7]{Blute2019} which is a natural transformation $\oc A \otimes A \xrightarrow{\mathsf{d}} \oc A$ such that the following axioms hold: 
 \begin{enumerate}[{\bf [d.1]}]
 \item Constant Rule: $\mathsf{d} \iota = 0$
 \item Leibniz Rule: $\mathsf{d} \Delta = (\Delta \otimes 1)(1 \otimes \mathsf{d}) + (\Delta\otimes 1)(1 \otimes \sigma)(\mathsf{d} \otimes 1)$
 \item Linear Rule: $\mathsf{d} \varepsilon = \iota \otimes 1$
 \item Chain Rule: $\mathsf{d} \delta = (\Delta \otimes 1)(\mathsf{d} \otimes \delta)\mathsf{d}$
 \item Interchange Rule: $(1 \otimes \sigma)(\mathsf{d} \otimes 1)\mathsf{d} = (\mathsf{d} \otimes 1)\mathsf{d}$. 
\end{enumerate}
\end{enumerate}
\end{definition}

For a full detailed explanation of the deriving transformation axioms and a string diagram representation, see \cite{Blute2019,blute2006differential}. Examples of differential categories can be found at the end of this section (Example \ref{RELex} and Example \ref{VECex}), while other interesting examples can be found in \cite[Section 9]{Blute2019}. 

For a differential category with finite products, its coKleisli category is a Cartesian differential category. As we will be working with coKleisli categories, we will use the notation found in \cite{blute2015cartesian} and use interpretation brackets $\llbracket - \rrbracket$ to help distinguish between composition in the base category and coKleisli composition. So for a comonad $(\oc, \delta, \varepsilon)$ on a category $\mathbb{X}$, let $\mathbb{X}_\oc$ denote its coKleisli category, which is the category whose objects are the same as $\mathbb{X}$ and where $\mathbb{X}_\oc(A,B) = \mathbb{X}(\oc A, B)$ with composition and identity defined as: 
\begin{align*}
\llbracket fg \rrbracket = \delta \oc(\llbracket f \rrbracket) \llbracket g \rrbracket && \llbracket 1 \rrbracket = \varepsilon 
\end{align*}
If $\mathbb{X}$ if has finite products then so does $\mathbb{X}_\oc$ where on objects the product is defined as in $\mathbb{X}$ and where the remaining data is defined as follows: 
\begin{align*}
\llbracket \pi_0 \rrbracket = \varepsilon \pi_0 && \llbracket \pi_1 \rrbracket = \varepsilon \pi_1 && \llbracket \langle f, g \rangle \rrbracket = \left \langle \llbracket f \rrbracket, \llbracket g \rrbracket \right \rangle && \llbracket f \times g \rrbracket = \left \langle \oc(\pi_0) \llbracket f \rrbracket , \oc(\pi_1) \llbracket g \rrbracket \right \rangle
\end{align*}
If $\mathbb{X}$ is a Cartesian left additive category then so is $\mathbb{X}_\oc$ where:
\begin{align*}
\llbracket f+g \rrbracket = \llbracket f \rrbracket + \llbracket g \rrbracket && \llbracket 0 \rrbracket = 0 && \llbracket \oplus \rrbracket = \varepsilon \oplus \end{align*}
where $\oplus$ is defined as in Lemma \ref{opluslem}. Since every category with finite biproducts is a Cartesian left additive category (where every map is additive), it follows that every differential category with finite products (which by the additive enrichment are in fact biproducts) is a Cartesian left additive category. Lastly, one then uses the deriving transformation to define the differential combinator of the coKleisli category. 

\begin{proposition}\label{coKleisliCDC} \cite[Proposition 3.2.1]{blute2009cartesian} Let $\mathbb{X}$ be a differential category with finite products. Define the natural transformation $\oc(A \times B) \xrightarrow{\chi} \oc A \otimes \oc B$ as follows: 
\[\chi := \xymatrixcolsep{5pc}\xymatrix{\oc(A \times B) \ar[r]^-{\Delta} & \oc(A \times B) \otimes \oc(A \times B) \ar[r]^-{\oc(\pi_0) \otimes \oc(\pi_1)} &\oc A \otimes \oc B
 } \]
Then the coKleisli category $\mathbb{X}_\oc$ is a Cartesian differential category with Cartesian left additive structure defined above and differential combinator $\mathsf{D}$ defined as follows on a coKleisli map $\oc A \xrightarrow{\llbracket f \rrbracket} B$: 
\[ \llbracket \mathsf{D}[f] \rrbracket := \xymatrixcolsep{2pc}\xymatrix{\oc(A \times A) \ar[r]^-{\chi} & \oc A \otimes \oc A \ar[r]^-{1 \otimes \varepsilon} & \oc A \otimes A \ar[r]^-{\mathsf{d}} & \oc A \ar[r]^-{\llbracket f \rrbracket} & B 
 } \]
\end{proposition}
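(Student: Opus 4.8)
The plan is to verify the seven axioms \textbf{[CD.1]}--\textbf{[CD.7]} for the combinator $\llbracket \mathsf{D}[f] \rrbracket = \chi(1 \otimes \varepsilon)\mathsf{d}\llbracket f \rrbracket$, having already established (as recalled just above the statement) that $\mathbb{X}_\oc$ is a Cartesian left additive category. The uniform method is to unfold each coKleisli-level equation into the base category $\mathbb{X}$ using $\llbracket fg \rrbracket = \delta\,\oc(\llbracket f \rrbracket)\llbracket g \rrbracket$ together with the coKleisli product data, and then to reduce the resulting diagram to one of the deriving transformation axioms \textbf{[d.1]}--\textbf{[d.5]}, assisted by naturality and the comonad/comonoid coherences (\ref{comonoid}) and (\ref{deltaeq}).

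First I would prove a handful of coherence lemmas for $\chi = \Delta(\oc(\pi_0) \otimes \oc(\pi_1))$, describing how it commutes past $\Delta$, how $\chi(\varepsilon \otimes \varepsilon)$ and $\chi(1 \otimes \iota)$ simplify, and how $\chi$ interacts with $\oc$ applied to pairings and projections. These are the bookkeeping identities that let one slide $\chi$ through the comonoid structure, and they are what make the later computations tractable. With these in hand the easy axioms fall out quickly: \textbf{[CD.1]} is immediate from the additive enrichment (\ref{add2}) since $\llbracket f+g \rrbracket = \llbracket f \rrbracket + \llbracket g \rrbracket$ and both $\mathsf{d}$ and composition preserve sums; \textbf{[CD.3]} follows from the linear rule \textbf{[d.3]}, $\mathsf{d}\varepsilon = \iota \otimes 1$, after plugging $\llbracket 1 \rrbracket = \varepsilon$ and the projections into the definition and collapsing with the counit law of (\ref{comonoid}); \textbf{[CD.4]} is routine from the definition of pairing in $\mathbb{X}_\oc$. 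For \textbf{[CD.2]}, the additivity half comes from $\mathsf{d}$ being additive in its right tensor factor, which is part of the additive enrichment (\ref{add3}), while the vanishing half $\langle 1,0\rangle \mathsf{D}[f] = 0$ forces an $\iota$ through $\mathsf{d}$ and dies by the constant rule \textbf{[d.1]}.

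The substantial work is in \textbf{[CD.5]}, \textbf{[CD.6]}, and \textbf{[CD.7]}. For the chain rule \textbf{[CD.5]} I would unfold $\llbracket \mathsf{D}[fg] \rrbracket = \chi(1 \otimes \varepsilon)\mathsf{d}\,\delta\,\oc(\llbracket f \rrbracket)\llbracket g \rrbracket$ and rewrite $\mathsf{d}\delta$ by the chain rule \textbf{[d.4]}, $\mathsf{d}\delta = (\Delta \otimes 1)(\mathsf{d} \otimes \delta)\mathsf{d}$; the two legs $\delta$ and $\mathsf{d}$ are then reassembled, using naturality and the $\chi$-lemmas, into the coKleisli composite $\langle \pi_0 f, \mathsf{D}[f] \rangle \mathsf{D}[g]$. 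For \textbf{[CD.6]} and \textbf{[CD.7]} I would first compute an explicit base-category expression for $\llbracket \mathsf{D}^2[f] \rrbracket$ on $\oc((A \times A) \times (A \times A))$ by applying the definition of $\mathsf{D}$ twice and pushing the two copies of $\chi$ and the two $\varepsilon$'s through $\Delta$ with the Leibniz rule \textbf{[d.2]}. Then \textbf{[CD.6]} follows by substituting $\langle 1,0\rangle \times \langle 0,1\rangle$ and collapsing redundant branches with \textbf{[d.1]} and \textbf{[d.3]}, while \textbf{[CD.7]} is exactly the interchange rule \textbf{[d.5]}, $(1 \otimes \sigma)(\mathsf{d} \otimes 1)\mathsf{d} = (\mathsf{d} \otimes 1)\mathsf{d}$, once the Cartesian interchange $c$ on $(A \times A) \times (A \times A)$ is matched against the monoidal symmetry $\sigma$ acting on the two inner tensor factors.

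The main obstacle I expect is the second-derivative bookkeeping behind \textbf{[CD.6]} and \textbf{[CD.7]}: aligning the Cartesian maps $c$, $\langle 1,0\rangle$ and $\langle 0,1\rangle$ with the monoidal symmetry $\sigma$ and the two nested applications of $\chi$ and $\Delta$ demands careful use of cocommutativity and coassociativity, and it is easy to lose track of which tensor factor each $\varepsilon$ or $\iota$ lands on. Proving the $\chi$-coherence lemmas up front is precisely what contains this complexity.
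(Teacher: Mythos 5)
The paper does not actually prove this proposition at all --- it is stated as background and cited from \cite[Proposition 3.2.1]{blute2009cartesian} --- and your sketch is essentially a reconstruction of that cited proof: the same axiom-by-axiom verification in which the linear rule \textbf{[d.3]} yields \textbf{[CD.3]}, the chain rule \textbf{[d.4]} yields \textbf{[CD.5]}, the Leibniz rule \textbf{[d.2]} drives an explicit base-category formula for $\llbracket \mathsf{D}^2[f] \rrbracket$ (the very formula this paper later quotes from that proof as (\ref{D2cok})), and the interchange rule \textbf{[d.5]} is precisely what delivers \textbf{[CD.7]}. One minor correction: the vanishing half of \textbf{[CD.2]}, namely $\langle 1,0 \rangle \mathsf{D}[f] = 0$, does not come from the constant rule \textbf{[d.1]}; unfolding the coKleisli composite produces a tensor factor $\oc(0)\varepsilon = \varepsilon 0 = 0$ by naturality of $\varepsilon$ and left additivity, after which the whole composite dies by the additive enrichment (\ref{add2}) and (\ref{add3}).
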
 

A differential exponential map in the coKleisli category of a differential category would be a map of type $\oc A \xrightarrow{e} A$ satisfying the required identities, which we can simplify slightly. 

\begin{proposition}\label{coKexp1} For a differential category $\mathbb{X}$ with finite products, a map $\oc A \xrightarrow{e} A$ is a differential exponential map in the coKleisli category $\mathbb{X}_\oc$ if and only if the following diagrams commute: 
\begin{equation}\label{cokleisliexp}\begin{gathered} \xymatrixcolsep{1.5pc}\xymatrix{ \oc A \ar[ddrr]_-{\varepsilon} \ar[r]^-{\Delta} & \oc A \otimes \oc A \ar[r]^-{\oc(0) \otimes \varepsilon} & \oc A \otimes A \ar[d]^-{\mathsf{d}} & \oc(A \times A) \ar[dd]_-{\oc(\oplus)} \ar[r]^-{\chi} & \oc A \otimes \oc A \ar[r]^-{1 \otimes e} & \oc A \otimes A \ar[d]^-{\mathsf{d}} \\
 & & \oc A \ar[d]^-{e} & & & \oc A \ar[d]^-{e} \\
 & & A & \oc A \ar[rr]_-{e}&& A 
 } \end{gathered}\end{equation}
\end{proposition}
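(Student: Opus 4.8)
The plan is to use that the interpretation bracket identifies the hom-set $\mathbb{X}_\oc(A,A)$ with $\mathbb{X}(\oc A, A)$, so that an equation of coKleisli maps holds if and only if the equation of their interpretations holds in the base category $\mathbb{X}$. It therefore suffices to translate each of the two defining diagrams (\ref{dem}) of a differential exponential map, written out in $\mathbb{X}_\oc$, into $\mathbb{X}$, and to check that the left diagram of (\ref{dem}) becomes the left diagram of (\ref{cokleisliexp}) while the right diagram of (\ref{dem}) becomes the right diagram of (\ref{cokleisliexp}). The only tools needed are the coKleisli composition formula $\llbracket fg\rrbracket = \delta\,\oc(\llbracket f\rrbracket)\,\llbracket g\rrbracket$, the interpretations of the structural maps recorded before Proposition \ref{coKleisliCDC}, the formula $\llbracket\mathsf{D}[e]\rrbracket = \chi(1\otimes\varepsilon)\mathsf{d}\,e$, the definition $\chi = \Delta(\oc(\pi_0)\otimes\oc(\pi_1))$, and the comonad/comonoid identities (\ref{comonoid}) and (\ref{deltaeq}) together with naturality of $\varepsilon$, $\delta$ and $\Delta$.

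First I would handle the left diagram $\langle 0,1\rangle\mathsf{D}[e] = 1$. Expanding the interpretation gives $\llbracket\langle 0,1\rangle\mathsf{D}[e]\rrbracket = \delta\,\oc(\langle 0,\varepsilon\rangle)\,\chi\,(1\otimes\varepsilon)\,\mathsf{d}\,e$. Substituting $\chi = \Delta(\oc(\pi_0)\otimes\oc(\pi_1))$, pushing $\oc(\langle 0,\varepsilon\rangle)$ through $\Delta$ by naturality, and using $\langle 0,\varepsilon\rangle\pi_0 = 0$ and $\langle 0,\varepsilon\rangle\pi_1 = \varepsilon$, this becomes $\delta\,\Delta\,(\oc(0)\otimes\oc(\varepsilon))(1\otimes\varepsilon)\mathsf{d}\,e$; applying the comonoid-morphism law $\delta\Delta = \Delta(\delta\otimes\delta)$ turns it into $\Delta\,(\delta\oc(0)\otimes\delta\oc(\varepsilon))(1\otimes\varepsilon)\mathsf{d}\,e$. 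The factor $\delta\oc(\varepsilon) = 1$ by the comonad law, and for the other factor one uses the key identity $\delta_A\oc(0_{\oc A\to A}) = \oc(0_{A\to A})$, which follows by writing $0_{\oc A\to A} = \oc(0_{A\to A})\varepsilon$ (naturality of $\varepsilon$) and then applying naturality of $\delta$ and $\delta\oc(\varepsilon) = 1$. This collapses the interpretation to $\Delta(\oc(0)\otimes\varepsilon)\mathsf{d}\,e$, and since $\llbracket 1\rrbracket = \varepsilon$, the left diagram of (\ref{dem}) is exactly the left diagram of (\ref{cokleisliexp}).

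Next I would handle the right diagram $(1\times e)\mathsf{D}[e] = \oplus e$. For the right-hand side, $\llbracket\oplus e\rrbracket = \delta\,\oc(\varepsilon\oplus)\,e = \oc(\oplus)\,e$ using $\delta\,\oc(\varepsilon) = 1$. For the left-hand side, $\llbracket 1\times e\rrbracket = \langle\oc(\pi_0)\varepsilon,\oc(\pi_1)e\rangle$, and expanding as before and pushing this pairing through $\chi$ via naturality of $\Delta$ and $\delta\Delta = \Delta(\delta\otimes\delta)$ yields $\Delta(\oc(\pi_0)\otimes\oc(\pi_1)\,\delta\oc(e))(1\otimes\varepsilon)\mathsf{d}\,e$, where the first tensor factor is simplified by $\delta\oc(\varepsilon) = 1$ and naturality. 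The nested term then collapses because $\delta_A\,\oc(e)\,\varepsilon = e$ (by naturality of $\varepsilon$ and $\delta\varepsilon = 1$), leaving $\Delta(\oc(\pi_0)\otimes\oc(\pi_1)e)\mathsf{d}\,e = \chi(1\otimes e)\mathsf{d}\,e$. Thus the right diagram of (\ref{dem}) is exactly the right diagram of (\ref{cokleisliexp}). I expect the main obstacle to be purely bookkeeping around the fact that the functor $\oc$ does not preserve zero morphisms, so one cannot replace $\oc(0)$ by $0$; the entire left-diagram reduction hinges on the identity $\delta\oc(0_{\oc A\to A}) = \oc(0_{A\to A})$, and the analogous care is needed to reduce the stray $\delta\oc(e)$ appearing in the right diagram.
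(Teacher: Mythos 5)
Your proposal is correct and takes the same approach as the paper: reduce both diagrams of (\ref{dem}) to base-category equations by computing $\llbracket \langle 0,1\rangle \mathsf{D}[e]\rrbracket = \Delta(\oc(0)\otimes\varepsilon)\mathsf{d}e$, $\llbracket \oplus e\rrbracket = \oc(\oplus)e$, and $\llbracket (1\times e)\mathsf{D}[e]\rrbracket = \chi(1\otimes e)\mathsf{d}e$. The paper states these three interpretation identities and leaves their verification to the reader, whereas you carry out the verifications in full (correctly, including the genuinely subtle point that $\oc$ does not preserve zero maps, so one needs $\delta\oc(0_{\oc A \to A}) = \oc(0_{A\to A})$ rather than naively replacing $\oc(0)$ by $0$).
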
 
\begin{proof} Let $\oc A \xrightarrow{e} A$ be an arbitrary map. Then we leave it to the reader to check for themselves that we have the following three equalities (which are mostly straightforward calculations):  
\begin{align*}
\llbracket \langle 0,1 \rangle \mathsf{D}[e] \rrbracket = \Delta (\oc(0) \otimes \varepsilon) \mathsf{d} e && \llbracket \oplus e \rrbracket = \oc(\oplus) e && \llbracket (1 \times e) \mathsf{D}[e] \rrbracket = \chi (1 \otimes e) \mathsf{d} e
\end{align*}
Therefore, $e$ is a differential exponential map in the coKleisli category if and only if $\llbracket \langle 0,1 \rangle \mathsf{D}[e] \rrbracket = \llbracket 1 \rrbracket$ and $\llbracket \oplus e \rrbracket = \llbracket (1 \times e) \mathsf{D}[e] \rrbracket$, which by the above equalities is precisely that both $\Delta (\oc(0) \otimes \varepsilon) \mathsf{d} e = \varepsilon$ and $\oc(\oplus) e = \chi (1 \otimes e) \mathsf{d} e$ hold. 
\end{proof} 

We now study differential exponential maps in the presence of the Seely isomorphisms. 

\begin{definition} A \textbf{differential storage category} \cite[Definition 10]{Blute2019} is a differential category with finite products whose coalgebra modality has the \textbf{Seely isomorphisms}, that is, the natural transformation $\oc(A \times B) \xrightarrow{\chi} \oc A \otimes \oc B$ (as defined in Proposition \ref{coKleisliCDC}) is a natural isomorphism and the map $\oc \top \xrightarrow{\chi_\top} K$ defined as $\chi_\top = \iota$ is an isomorphism. 
\end{definition}

There are numerous interesting consequences of having the Seely isomorphisms, such as $\oc A$ coming equipped with a natural commutative monoid structure. Recall that in a symmetric (strict) monoidal category, a commutative monoid is a triple $(A, \blacktriangledown, v)$ consisting of an object $A$ and maps $A \otimes A \xrightarrow{\blacktriangledown} A$, and $K \xrightarrow{v} A$ such that the following equalities hold: 
\begin{equation}\label{tensormonoid}\begin{gathered} 
(\blacktriangledown \otimes 1)\nabla = (1 \otimes \blacktriangledown)\blacktriangledown \quad \quad \quad (v \otimes 1) \blacktriangledown = 1 = (1 \otimes v)\blacktriangledown \quad \quad \quad \sigma \blacktriangledown = \blacktriangledown
\end{gathered}\end{equation}
In the presence of the Seely isomorphisms, define the natural transformations $\oc A \otimes \oc A \xrightarrow{\nabla} \oc A$ and $K \xrightarrow{\nu} \oc A$ respectively as follows:
\begin{equation}\label{nabladef}\begin{gathered} \nabla := \xymatrixcolsep{1.75pc}\xymatrix{ \oc A \otimes \oc A \ar[r]^-{\chi^{-1}} & \oc(A \times A) \ar[r]^-{\oc(\oplus)} & \oc A 
 } \quad \quad \quad \nu := \xymatrixcolsep{1.75pc}\xymatrix{ K \ar[r]^-{\chi_\top^{-1}} & \oc \top \ar[r]^-{\oc(0)} & \oc A 
 } \end{gathered}\end{equation}
By \cite[Theorem 6]{Blute2019}, $(\oc A, \nabla, \nu)$ is a commutative monoid. In fact, $\oc A$ is also a bialgebra and this makes $\oc$ an \textbf{additive bialgebra modality} \cite[Definition 5]{Blute2019} and so in particular we have the following equalities: 
\begin{equation}\label{addbialg}\begin{gathered} \oc(f+g) = \Delta (\oc(f) \otimes \oc(g)) \nabla \quad \quad \quad \oc(0) = \iota \nu \end{gathered}\end{equation}
One can also define the natural transformation $A \xrightarrow{\eta} \oc A$, called the \textbf{codereliction} \cite[Definition 9]{Blute2019}, as follows: 
\begin{equation}\label{etadef}\begin{gathered} \eta := \xymatrixcolsep{5pc}\xymatrix{ A \ar[r]^-{\nu \otimes 1} & \oc A \otimes A \ar[r]^-{\mathsf{d}} & \oc A 
 } \end{gathered}\end{equation}
and the following equalities hold: 
  \begin{enumerate}[{\bf [cd.1]}]
 \item Constant Rule: $\eta \iota = 0$
 \item Leibniz Rule: $\eta \Delta = \eta \otimes \nu + \nu \otimes \eta$
 \item Linear Rule: $\eta \varepsilon = 1$
 \item (Alternative) Chain Rule: $\eta \delta = (\nu \otimes \eta)(\delta \otimes \eta) \nabla$. 
\end{enumerate}
By \cite[Theorem 4]{Blute2019}, for a differential storage category, one could have started with a codereliction $\eta$ to construct a deriving transformation as follows:
\begin{equation}\label{ddef}\begin{gathered} \mathsf{d} := \xymatrixcolsep{5pc}\xymatrix{ \oc A \otimes A \ar[r]^-{1 \otimes \eta} & \oc A \otimes \oc A \ar[r]^-{\nabla} & \oc A 
 } \end{gathered}\end{equation}
These constructions are inverses of each other and thus in the presence of the Seely isomorphisms: deriving transformations are in bijective correspondence with coderelictions. 
  
\begin{proposition}\label{coKexp2} For a differential storage category $\mathbb{X}$, a map $\oc A \xrightarrow{e} A$ is a differential exponential map in the coKleisli category $\mathbb{X}_\oc$ if and only if the following diagrams commute: 
\begin{equation}\label{cokleisliexp2}\begin{gathered} \xymatrixcolsep{3pc}\xymatrix{A \ar@{=}[dr]^-{} \ar[r]^-{\eta} & \oc A \ar[d]^-{e} & \oc A \otimes \oc A \ar[d]_-{\nabla} \ar[r]^-{1 \otimes e} & \oc A \otimes A \ar[r]^-{\mathsf{d}} & \oc A \ar[d]^-{e} \\
& A & \oc A \ar[rr]_-{e} & & A  
 } \end{gathered}\end{equation} 
\end{proposition}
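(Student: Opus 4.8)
The plan is to reduce everything to Proposition \ref{coKexp1}, which already tells us that $\oc A \xrightarrow{e} A$ is a differential exponential map in $\mathbb{X}_\oc$ if and only if the two base-category identities $\Delta(\oc(0) \otimes \varepsilon)\mathsf{d}e = \varepsilon$ and $\oc(\oplus)e = \chi(1 \otimes e)\mathsf{d}e$ hold. It therefore suffices to show that, in a differential storage category, the first of these is equivalent to $\eta e = 1$ and the second is equivalent to $\nabla e = (1 \otimes e)\mathsf{d}e$.

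The second equivalence I expect to be immediate. By the definition of $\nabla$ in (\ref{nabladef}) we have $\oc(\oplus) = \chi\nabla$, so the identity $\oc(\oplus)e = \chi(1 \otimes e)\mathsf{d}e$ reads $\chi\nabla e = \chi(1 \otimes e)\mathsf{d}e$. Since the Seely map $\chi$ is an isomorphism, I can cancel it on the left to obtain exactly $\nabla e = (1 \otimes e)\mathsf{d}e$, and conversely.

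For the first equivalence, the key step is to simplify the morphism $\Delta(\oc(0) \otimes \varepsilon)\mathsf{d}$. Using $\oc(0) = \iota\nu$ from (\ref{addbialg}), I would factor $\oc(0) \otimes \varepsilon = (\iota \otimes \varepsilon)(\nu \otimes 1)$, and then recognize $(\nu \otimes 1)\mathsf{d} = \eta$ by the definition of the codereliction in (\ref{etadef}). This rewrites the morphism as $\Delta(\iota \otimes \varepsilon)\eta$. A second factoring $\iota \otimes \varepsilon = (\iota \otimes 1)\varepsilon$ together with the counit law $\Delta(\iota \otimes 1) = 1$ from (\ref{comonoid}) collapses $\Delta(\iota \otimes \varepsilon)$ to $\varepsilon$, so that $\Delta(\oc(0) \otimes \varepsilon)\mathsf{d} = \varepsilon\eta$. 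Consequently the first identity of Proposition \ref{coKexp1} becomes $\varepsilon\eta e = \varepsilon$.

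It then remains to show that $\varepsilon\eta e = \varepsilon$ is equivalent to $\eta e = 1$, and this is where I expect the only genuine subtlety to lie. One direction is a direct substitution: if $\eta e = 1$ then $\varepsilon\eta e = \varepsilon$. The converse cannot proceed by cancelling the non-epic factor $\varepsilon$, and is instead handled using the linear rule $\eta\varepsilon = 1$ ({\bf [cd.3]}): precomposing the equation $\varepsilon\eta e = \varepsilon$ with $\eta$ yields $\eta\varepsilon\eta e = \eta\varepsilon$, and since $\eta\varepsilon = 1$ both sides simplify to give $\eta e = 1$. Thus the main obstacle is really just establishing the identity $\Delta(\oc(0) \otimes \varepsilon)\mathsf{d} = \varepsilon\eta$ and the observation that the spurious $\varepsilon$ in $\varepsilon\eta e = \varepsilon$ can nonetheless be stripped off by precomposing with the codereliction.
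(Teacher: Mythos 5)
Your proof is correct, and its skeleton---reducing everything to Proposition \ref{coKexp1} and translating the two base-category identities---is the same as the paper's; in particular your treatment of the second identity (cancelling the Seely isomorphism $\chi$ against $\oc(\oplus) = \chi\nabla$) coincides exactly with the paper's. Where you genuinely diverge is the first identity. The paper proves the two implications by separate computations: to get $\eta e = 1$ from the hypothesis $\Delta(\oc(0) \otimes \varepsilon)\mathsf{d}e = \varepsilon$, it starts from $1 = \eta\varepsilon$, substitutes, and expands $\eta\Delta$ using the Leibniz rule {\bf [cd.2]} for the codereliction (plus naturality of $\eta$ and $\nu$, {\bf [cd.1]}, and {\bf [cd.3]}); for the converse it rewrites $\mathsf{d} = (1 \otimes \eta)\nabla$ via (\ref{ddef}) and collapses $\Delta(\oc(0) \otimes \varepsilon)\mathsf{d}e$ to $\varepsilon\eta e$ using $\oc(0) = \iota\nu$, the counit law (\ref{comonoid}), and the unit law (\ref{tensormonoid}). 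You instead isolate the $e$-free identity $\Delta(\oc(0) \otimes \varepsilon)\mathsf{d} = \varepsilon\eta$ once and for all---derived from the definition (\ref{etadef}) of $\eta$ rather than from (\ref{ddef}), which is legitimate and if anything more primitive in the paper's setup, since the two formulas are mutually inverse constructions---and then handle both directions simultaneously through the equivalence $\varepsilon\eta e = \varepsilon \Leftrightarrow \eta e = 1$, whose nontrivial direction is your precomposition-with-$\eta$ observation using {\bf [cd.3]}. This buys a shorter argument that avoids the codereliction Leibniz rule {\bf [cd.2]} entirely and makes visible exactly where the non-epicness of $\varepsilon$ is overcome; the paper's route is longer but exhibits how {\bf [cd.2]} interacts with the coKleisli differential, which has some expository value. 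Both proofs are sound.
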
 
\begin{proof} Suppose that $\oc A \xrightarrow{e} A$ is a differential exponential map in the coKleisli category. Using Proposition \ref{coKexp1} we show that $e$ satisfies (\ref{cokleisliexp2}): 
\begin{align*}
1 &=~ \eta \varepsilon \tag{\bf [cd.3]} \\
&=~ \eta \Delta (\oc(0) \otimes \varepsilon) \mathsf{d} e \tag{\ref{cokleisliexp}} \\
&=~ (\eta \otimes \nu) (\oc(0) \otimes \varepsilon) \mathsf{d} e + (\nu \otimes \eta) (\oc(0) \otimes \varepsilon) \mathsf{d} e \tag{\bf [cd.2]} \\ 
&=~ 0 + \eta \varepsilon (\nu \otimes 1) \mathsf{d} e \tag{Naturality of $\eta$ and $\nu$} \\
&=~ (\nu \otimes 1) \mathsf{d} e \tag{\bf [cd.3]} \\
&=~ \eta e \tag{\ref{etadef}} \\ \\ 
\nabla e &=~ \chi^{-1} \oc(\oplus) e \tag{\ref{nabladef}} \\
&=~ \chi^{-1} \chi (1 \otimes e) \mathsf{d} e \tag{\ref{cokleisliexp}} \\
&=~ (1 \otimes e) \mathsf{d} e 
\end{align*}
Conversely, suppose $\oc A \xrightarrow{e} A$ satisfies (\ref{cokleisliexp2}). We show that $e$ satisfies (\ref{cokleisliexp}): 
\begin{align*}
\Delta (\oc(0) \otimes \varepsilon) \mathsf{d} e &=~\Delta (\oc(0) \otimes \varepsilon)(1 \otimes \eta) \nabla e \tag{\ref{ddef}} \\ 
&=~ \Delta (\iota \otimes 1) (\nu \otimes 1)(1 \otimes \varepsilon)(1 \otimes \eta) \nabla e \tag{\ref{addbialg}} \\
&=~ \varepsilon \eta e \tag{(\ref{comonoid}) + (\ref{tensormonoid})} \\
&=~ \varepsilon \tag{\ref{cokleisliexp2}} \\ \\
\chi (1 \otimes e) \mathsf{d} e &=~ \chi \nabla e \tag{\ref{cokleisliexp2}} \\ 
&=~ \chi \chi^{-1} \oc(\oplus) e \tag{\ref{nabladef}} \\
&=~ \oc(\oplus) e
\end{align*}
Therefore, by Proposition \ref{coKexp1}, $e$ is a differential exponential map. 
\end{proof}

In a differential storage category, differential exponential maps in the coKleisli category can also be characterized by commutative monoids in the base category, which the analogue of how differential exponential maps are in bijective correspondence to differential exponential rigs. 

\begin{definition}\label{!deadef} A \textbf{$\oc$-differential exponential algebra} in a differential storage category is a quadruple $(A, \blacktriangledown, v, e)$ consisting of an object $A$ and maps $A \otimes A \xrightarrow{\blacktriangledown} A$, $K \xrightarrow{v} A$, and $\oc A \xrightarrow{e} A$ such that $(A, \blacktriangledown, v)$ is a commutative monoid, 
and also that the following diagrams commute: 
\begin{equation}\label{!exp}\begin{gathered} \xymatrixcolsep{2.5pc}\xymatrix{ A \ar@{=}[dr]^-{} \ar[r]^-{\eta} & \oc A \ar[d]^-{e} & K \ar[r]^-{\nu} \ar[dr]_-{v} & \oc A \ar[d]^-{e} & \oc A \otimes \oc A \ar[d]_-{e \otimes e} \ar[r]^-{\nabla} & \oc A \ar[d]^-{e} \\
 & A & & A & A \otimes A \ar[r]_-{\blacktriangledown} & A  
 } \end{gathered}\end{equation}
\end{definition}

Note in particular that for a $\oc$-differential exponential algebra, the two rightmost diagrams of (\ref{!exp}) says that $e$ is a monoid morphism. We now show that every $\oc$-differential exponential algebra induces a differential exponential map in the coKleisli category and vice-versa. 

\begin{proposition}\label{propcok1} Let $(A, \blacktriangledown, v, e)$ be a $\oc$-differential exponential algebra. Then $\oc A \xrightarrow{e} A$ is a differential exponential map in the coKleisli category and furthermore the following diagrams commute: 
\[\xymatrixcolsep{2.5pc}\xymatrix{ \oc(A \times A) \ar[r]^-{\chi} \ar[drr]_-{\llbracket \odot_e \rrbracket} & \oc A \otimes \oc A \ar[r]^-{\varepsilon \otimes \varepsilon} & A \otimes A \ar[d]^-{\blacktriangledown} & \oc \top \ar[r]^-{\chi_\top} \ar[dr]_-{\llbracket u_e \rrbracket} & K \ar[d]^-{v} \\
& & A & & A
 } \]
 where $\odot_e$ and $u_e$ are defined as in Proposition \ref{prop2}. 
\end{proposition}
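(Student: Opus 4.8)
The plan is to establish the three assertions in turn: that $e$ is a differential exponential map in $\mathbb{X}_\oc$, the unit identity $\llbracket u_e\rrbracket = \chi_\top v$, and the multiplication identity $\llbracket \odot_e\rrbracket = \chi(\varepsilon\otimes\varepsilon)\blacktriangledown$. For the first I would invoke Proposition \ref{coKexp2}, whose two hypotheses are $\eta e = 1$ and $\nabla e = (1\otimes e)\mathsf{d} e$. The first is literally the left diagram of (\ref{!exp}). For the second I would expand the deriving transformation as $\mathsf{d} = (1\otimes\eta)\nabla$ from (\ref{ddef}), so $(1\otimes e)\mathsf{d} e = (1\otimes e\eta)\nabla e$; substituting the monoid-morphism identity $\nabla e = (e\otimes e)\blacktriangledown$ (right diagram of (\ref{!exp})) yields $(e\otimes e\eta e)\blacktriangledown$, and $\eta e = 1$ collapses $e\eta e$ to $e$, leaving $(e\otimes e)\blacktriangledown = \nabla e$. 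Hence $e$ is a differential exponential map, and by Proposition \ref{prop2} the quadruple $(A,\odot_e,u_e,e)$ is a differential exponential rig, so the identities of (\ref{des}) are available.

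For the unit I would compute $\llbracket u_e\rrbracket$ directly. Since $u_e = 0e$ in $\mathbb{X}_\oc$, coKleisli composition gives $\llbracket u_e\rrbracket = \delta\oc(\llbracket 0\rrbracket)e$, where $\llbracket 0\rrbracket$ is the additive zero $\oc\top \to A$. Writing this zero as $\varepsilon 0_{\top,A}$ (with $0_{\top,A}$ the base-category zero) and applying the comonad triangle identity $\delta\oc(\varepsilon) = 1$ reduces $\delta\oc(\llbracket 0\rrbracket)$ to $\oc(0_{\top,A})$, so $\llbracket u_e\rrbracket = \oc(0_{\top,A})e$. On the other side, $\chi_\top v = \chi_\top \nu e$ by the middle diagram of (\ref{!exp}), and unfolding $\nu = \chi_\top^{-1}\oc(0_{\top,A})$ from (\ref{nabladef}) cancels $\chi_\top\chi_\top^{-1}$ to give $\oc(0_{\top,A})e$ as well. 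Hence $\llbracket u_e\rrbracket = \chi_\top v$.

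The multiplication is the hard part, and here I would first record the clean intermediate identity
\[ \llbracket\mathsf{D}[e]\rrbracket = \chi(e\otimes\varepsilon)\blacktriangledown, \]
obtained from Proposition \ref{coKleisliCDC} (which gives $\llbracket\mathsf{D}[e]\rrbracket = \chi(1\otimes\varepsilon)\mathsf{d} e$) by the same $\mathsf{d} = (1\otimes\eta)\nabla$, $\nabla e = (e\otimes e)\blacktriangledown$, $\eta e = 1$ manipulations as in the first paragraph. Since $\odot_e := (\langle 0,1\rangle\times\langle 1,0\rangle)\mathsf{D}^2[e]$, I would then expand $\llbracket\odot_e\rrbracket$ by applying the coKleisli differential combinator once more to this simplified $\mathsf{D}[e]$ and precomposing with the coKleisli interpretation of $\langle 0,1\rangle\times\langle 1,0\rangle$, which works out to $\langle\langle 0,\varepsilon\pi_0\rangle,\langle\varepsilon\pi_1,0\rangle\rangle$. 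The routine ingredients are naturality of $\Delta$ and $\varepsilon$, the comonoid law $\delta\Delta = \Delta(\delta\otimes\delta)$ from (\ref{deltaeq}), and the comonad identities $\delta\oc(\varepsilon) = 1 = \delta\varepsilon_{\oc}$, which repeatedly collapse the outer $\delta\oc(-)$ layers, the target being $\chi(\varepsilon\otimes\varepsilon)\blacktriangledown$.

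I expect the genuine obstacle to be the second application of the deriving transformation, i.e. controlling $\mathsf{d}$ at the product object $A\times A$ and its interaction with the Seely map $\chi$ (equivalently, relating the multiplication $\nabla$ at $A\times A$ to the one at $A$), which is where the Leibniz and interchange structure of $\mathsf{d}$ and the comonoidal naturality of $\chi$ must be combined. An alternative that localizes this difficulty is to set $\llbracket\odot'\rrbracket := \chi(\varepsilon\otimes\varepsilon)\blacktriangledown$ and verify that $(A,\odot',u_e,e)$ satisfies the three diagrams of (\ref{des}): using $\llbracket\mathsf{D}[e]\rrbracket = \chi(e\otimes\varepsilon)\blacktriangledown$ one checks $\mathsf{D}[e] = (e\times 1)\odot'$ cleanly, and using $\oc(\oplus) = \chi\nabla$ together with $\nabla e = (e\otimes e)\blacktriangledown$ one checks $\oplus e = (e\times e)\odot'$, while the middle diagram is exactly the unit computation above; Theorem \ref{isothm} would then force $\odot' = \odot_e$ by uniqueness of the rig structure associated to $e$. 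The cost of this route is establishing that $\odot'$ is bilinear and $(A,\odot',u_e)$ a commutative monoid, which again passes through the same $\mathsf{d}$–$\chi$ computation, so either way the crux is the same product-object calculation.
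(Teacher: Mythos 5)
Your first two steps are sound. The verification that $e$ is a differential exponential map via Proposition \ref{coKexp2} is exactly the paper's argument (expand $\mathsf{d} = (1\otimes\eta)\nabla$ using (\ref{ddef}), apply the monoid-morphism identity $\nabla e = (e\otimes e)\blacktriangledown$, collapse with $\eta e = 1$), and your unit computation, though routed through $\llbracket 0 \rrbracket = \varepsilon 0$ and the comonad triangle identity rather than the paper's use of $\oc(0) = \iota\nu$ from (\ref{addbialg}) and $\delta\iota = \iota$ from (\ref{deltaeq}), is correct and equivalent. Your intermediate identity $\llbracket \mathsf{D}[e]\rrbracket = \chi(e\otimes\varepsilon)\blacktriangledown$ is also correct.

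The gap is the multiplication identity $\llbracket\odot_e\rrbracket = \chi(\varepsilon\otimes\varepsilon)\blacktriangledown$, which is the substance of the proposition, and you never actually prove it: both of your routes are explicitly deferred to what you call ``the same product-object calculation,'' namely controlling the deriving transformation at $A\times A$ against the Seely isomorphism, and that calculation is neither performed nor reduced to a citable fact. This is precisely where the paper supplies the missing ingredient: it quotes, from the proof of \cite[Proposition 3.2.1]{blute2009cartesian}, the closed form for the coKleisli second derivative,
\[
\llbracket \left( (1 \times 1) \times \langle 1, 0 \rangle \right) \mathsf{D}^2[f] \rrbracket \;=\; \chi(\chi \otimes 1)(1 \otimes \varepsilon \otimes \varepsilon)(\mathsf{d} \otimes 1)\mathsf{d}\, f,
\]
i.e.\ the paper's equation (\ref{D2cok}), which packages exactly the $\mathsf{d}$--$\chi$ interaction you identify as the obstacle; the rest of the paper's proof is then precomposition with $\langle 0,1\rangle\times 1$, naturality, the comonoid and bialgebra laws (\ref{comonoid}), (\ref{addbialg}), (\ref{etadef}), and the same final moves (\ref{ddef}) and (\ref{!exp}) that you use for $\mathsf{D}[e]$. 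Your alternative route via Theorem \ref{isothm} is an attractive idea --- uniqueness of the rig structure attached to $e$ would indeed force $\odot' = \odot_e$ --- but, as you concede, invoking that theorem requires $(A,\odot',u_e)$ to be a differential \emph{rig}, and bilinearity of $\odot'$ means computing $\mathsf{D}[\odot']$, which again differentiates a map whose source is a product object; so neither route escapes the unperformed computation. As written, the proposal locates the difficulty accurately but does not resolve it.
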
 
\begin{proof} By Proposition \ref{coKexp2}, it suffices to show that $e$ satisfies both diagrams of (\ref{cokleisliexp2}). However the left diagram of (\ref{cokleisliexp2}) is precisely the left most diagram of (\ref{!exp}). So it remains to show that $\nabla e = (1 \otimes e)\mathsf{d} e$: 
\begin{align*}
(1 \otimes e)\mathsf{d} e &=~ (1 \otimes e)(1 \otimes \eta) \nabla e \tag{\ref{ddef}} \\
&=~ (1 \otimes e)(1 \otimes \eta)(e \otimes e) \blacktriangledown\tag{\ref{!exp}} \\
&=~ (e \otimes e)\blacktriangledown\tag{\ref{!exp}} \\
&=~ \nabla e \tag{\ref{!exp}}  
\end{align*}
So we conclude that $e$ is a differential exponential map in the coKleisli category. Next we show that $\llbracket u_e \rrbracket = \chi_\top v$: 
\begin{align*}
\llbracket u_e \rrbracket &=~ \llbracket 0 e \rrbracket \\
&=~ \delta \oc(0) e \\
&=~ \delta \iota \nu e \tag{\ref{addbialg}} \\
&=~ \iota \nu e \tag{\ref{deltaeq}} \\
&=~ \iota v \tag{\ref{!exp}}  \\
&=~ \chi_\top v
\end{align*}
To show the other equality, we observe that in the proof of \cite[Proposition 3.2.1]{blute2009cartesian}, it was computed out that we have the following equality for any $\oc A \xrightarrow{f} B$:
\begin{equation}\label{D2cok}\begin{gathered} \llbracket \left( (1 \times 1) \times \langle 1, 0 \rangle \right) \mathsf{D}^2[f] \rrbracket = \chi(\chi \otimes 1)(1 \otimes \varepsilon \otimes \varepsilon)(\mathsf{d} \otimes 1)\mathsf{d} f
 \end{gathered}\end{equation}
 Using the above identity, we can show that: 
 \begin{align*}
\llbracket \odot_e \rrbracket &=~ \llbracket \left( \langle 0,1 \rangle \times \langle 1, 0 \rangle \right) \mathsf{D}^2[e] \rrbracket \\
&=~ \llbracket ( \langle 0,1 \rangle \times 1) \left( (1 \times 1) \times \langle 1, 0 \rangle \right) \mathsf{D}^2[e] \rrbracket \\
&=~ \delta \oc( \llbracket \langle 0,1 \rangle \times 1 \rrbracket) \llbracket \left( (1 \times 1) \times \langle 1, 0 \rangle \right) \mathsf{D}^2[e] \rrbracket \\
&=~ \delta \oc( \langle \oc(\pi_0) \llbracket \langle 0,1 \rangle \rrbracket, \oc(\pi_1) \llbracket 1 \rrbracket \rangle ) \llbracket \left( (1 \times 1) \times \langle 1, 0 \rangle \right) \mathsf{D}^2[e] \rrbracket \\
&=~ \delta \oc( \langle \oc(\pi_0) \langle 0, \varepsilon \rangle, \oc(\pi_1) \varepsilon \rangle) \llbracket \left( (1 \times 1) \times \langle 1, 0 \rangle \right) \mathsf{D}^2[e] \rrbracket \\ 
&=~ \delta \oc( \langle \varepsilon \langle 0, \pi_0 \rangle, \varepsilon\pi_1 \rangle ) \llbracket \left( (1 \times 1) \times \langle 1, 0 \rangle \right) \mathsf{D}^2[e] \rrbracket \tag{Naturality of $\varepsilon$} \\ 
&=~ \delta \oc(\varepsilon) \oc(\langle 0, \pi_0 \rangle, \varepsilon\pi_1 \rangle ) \llbracket \left( (1 \times 1) \times \langle 1, 0 \rangle \right) \mathsf{D}^2[e] \rrbracket \\ 
&=~ \oc(\langle 0, \pi_0 \rangle, \pi_1 \rangle ) \llbracket \left( (1 \times 1) \times \langle 1, 0 \rangle \right) \mathsf{D}^2[e] \rrbracket \tag{Comonad} \\ 
&=~ \oc(\langle 0, \pi_0 \rangle, \pi_1 \rangle ) \chi(\chi \otimes 1)(1 \otimes \varepsilon \otimes \varepsilon)(\mathsf{d} \otimes 1)\mathsf{d} e \tag{\ref{D2cok}} \\ 
&=~ \Delta \left( \oc(\langle 0, \pi_0 \rangle) \otimes \oc(\pi_1) \right) (\chi \otimes 1) (1 \otimes \varepsilon \otimes \varepsilon)(\mathsf{d} \otimes 1)\mathsf{d} e \tag{Definition of $\chi$} \\
&=~ \Delta(1 \otimes \oc(\pi_1)) (\Delta \otimes 1)(\oc(0) \otimes \oc(\pi_0) \otimes 1) (1 \otimes \varepsilon \otimes \varepsilon)(\mathsf{d} \otimes 1)\mathsf{d} e \tag{Definition of $\chi$} \\
&=~ \Delta(\Delta \otimes 1)(\oc(0) \otimes \oc(\pi_0) \otimes \oc(\pi_1)) (1 \otimes \varepsilon \otimes \varepsilon)(\mathsf{d} \otimes 1)\mathsf{d} e \\
&=~ \Delta(1 \otimes \Delta)(\oc(0) \otimes \oc(\pi_0) \otimes \oc(\pi_1)) (1 \otimes \varepsilon \otimes \varepsilon)(\mathsf{d} \otimes 1)\mathsf{d} e \tag{\ref{comonoid}} \\
&=~ \Delta(1 \otimes \chi)(\oc(0) \otimes \varepsilon \otimes \varepsilon)(\mathsf{d} \otimes 1)\mathsf{d} e \\
&=~ \Delta(1 \otimes \chi)(\iota \otimes 1 \otimes 1)(\nu \otimes \varepsilon \otimes \varepsilon)(\mathsf{d} \otimes 1)\mathsf{d} e \tag{\ref{addbialg}}\\
&=~ \chi (\varepsilon \otimes \varepsilon)(\eta \otimes 1) \mathsf{d} e \tag{(\ref{comonoid}) + (\ref{etadef})} \\
&=~ \chi (\varepsilon \otimes \varepsilon)(\eta \otimes 1)(1 \otimes \eta) \nabla e\tag{\ref{ddef}} \\ 
&=~ \chi (\varepsilon \otimes \varepsilon)(\eta \otimes 1)(1 \otimes \eta) (e \otimes e) \blacktriangledown \tag{\ref{!exp}} \\ 
&=~ \chi (\varepsilon \otimes \varepsilon) \blacktriangledown \tag{\ref{!exp}} 
\end{align*}
And so we have that $\llbracket \odot_e \rrbracket = \chi (\varepsilon \otimes \varepsilon) \blacktriangledown$. 
\end{proof}

\begin{proposition}\label{propcok2} Let $\oc A \xrightarrow{e} A$ be a differential exponential map in the coKleisli category of a differential storage category. Define the maps $A \otimes A \xrightarrow{\blacktriangledown_e} A$ and $K \xrightarrow{v_e} A$ respectively as follows: 
\[\blacktriangledown_e := \xymatrixcolsep{5pc}\xymatrix{ A \otimes A \ar[r]^-{\eta \otimes \eta} & \oc A \otimes \oc A \ar[r]^-{\nabla} & \oc A \ar[r]^-{e} & A } \] 
\[ v_e := \xymatrixcolsep{5pc}\xymatrix{K \ar[r]^-{\nu} & \oc A \ar[r]^-{e} & A 
 } \]
 Then $(A, \blacktriangledown_e, v_e, e)$ is a $\oc$-differential exponential algebra.
\end{proposition}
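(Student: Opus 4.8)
The plan is to verify directly the three diagrams of (\ref{!exp}) together with the three commutative-monoid axioms (\ref{tensormonoid}) for $(A, \blacktriangledown_e, v_e)$. By Proposition \ref{coKexp2}, the hypothesis that $e$ is a differential exponential map in $\mathbb{X}_\oc$ is precisely that $\eta e = 1$ and $\nabla e = (1 \otimes e)\mathsf{d}e$. The leftmost diagram of (\ref{!exp}) is then immediate ($\eta e = 1$), and the middle one holds by the very definition $v_e = \nu e$. The engine for everything else is a single preliminary identity: substituting $\mathsf{d} = (1 \otimes \eta)\nabla$ from (\ref{ddef}) into the second hypothesis gives $\nabla e = (1 \otimes e\eta)\nabla e$, and then applying the commutativity $\sigma\nabla = \nabla$ of the monoid $(\oc A, \nabla, \nu)$ together with naturality of $\sigma$ yields also $\nabla e = (e\eta \otimes 1)\nabla e$. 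Combining the two gives $\nabla e = (e\eta \otimes e\eta)\nabla e = (e \otimes e)(\eta \otimes \eta)\nabla e = (e \otimes e)\blacktriangledown_e$, which is exactly the rightmost diagram of (\ref{!exp}).

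For the monoid axioms I would first record the two forms $\blacktriangledown_e = (\eta \otimes \eta)\nabla e$ (the definition) and $\blacktriangledown_e = (\eta \otimes 1)\mathsf{d}e$ (obtained from the definition using $\nabla e = (1 \otimes e)\mathsf{d}e$ and $\eta e = 1$). Commutativity $\sigma\blacktriangledown_e = \blacktriangledown_e$ is then immediate from naturality of $\sigma$ and $\sigma\nabla = \nabla$. For the unit law I would use the second form: $(1 \otimes v_e)\blacktriangledown_e = (\eta \otimes \nu)(1 \otimes e)\mathsf{d}e = (\eta \otimes \nu)\nabla e$, and since $(\eta \otimes \nu)\nabla = \eta$ (which follows from the unit law $(1\otimes\nu)\nabla = 1$ of (\ref{tensormonoid})), this collapses to $\eta e = 1$; the other unit law follows by commutativity.

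The main obstacle is associativity, and the key realization is that the idempotent $e\eta$ is \emph{absorbed} by the transported multiplication: the preliminary identity above says exactly that $(e\eta \otimes 1)\nabla e = \nabla e = (1 \otimes e\eta)\nabla e$, so precomposing either input of $\nabla e$ with $e\eta$ changes nothing. I would expand $(\blacktriangledown_e \otimes 1)\blacktriangledown_e = (\blacktriangledown_e\eta \otimes \eta)\nabla e$, write $\blacktriangledown_e\eta = (\eta \otimes \eta)\nabla e\eta$, and use absorption on the first input to reach $(\eta \otimes \eta \otimes \eta)(\nabla \otimes 1)\nabla e$; symmetrically $(1 \otimes \blacktriangledown_e)\blacktriangledown_e = (\eta \otimes \eta \otimes \eta)(1 \otimes \nabla)\nabla e$. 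These two agree by associativity $(\nabla \otimes 1)\nabla = (1 \otimes \nabla)\nabla$ of $(\oc A, \nabla, \nu)$, giving associativity of $\blacktriangledown_e$. Conceptually the whole argument shows that $e$ is a split epimorphism of commutative monoids with section $\eta$, and that $(A, \blacktriangledown_e, v_e)$ is simply the monoid structure of $(\oc A, \nabla, \nu)$ transported along this retraction; absorption of $e\eta$ is exactly what makes the transported structure satisfy the axioms.
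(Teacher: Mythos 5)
Your proof is correct and takes essentially the same route as the paper's: both verify the commutative monoid axioms and the three diagrams of (\ref{!exp}) directly from the characterization of Proposition \ref{coKexp2} ($\eta e = 1$ and $\nabla e = (1 \otimes e)\mathsf{d}e$), using (\ref{ddef}), naturality of $\sigma$, and the commutative monoid structure of $(\oc A, \nabla, \nu)$. The only difference is organizational: you isolate the absorption identities $(e\eta \otimes 1)\nabla e = \nabla e = (1 \otimes e\eta)\nabla e$ as a reusable lemma (with the left-hand one obtained from the right-hand one by the symmetry trick), whereas the paper re-derives exactly these identities inline each time they are needed, so the underlying computations coincide.
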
 
\begin{proof} We first show that $(A, \blacktriangledown_e, v_e)$ is a commutative monoid. Starting with showing that $\blacktriangledown_e$ is commutative: 
\begin{align*}
\sigma \blacktriangledown_e &=~ \sigma (\eta \otimes \eta) \nabla e \\
&=~ (\eta \otimes \eta) \sigma \nabla e \tag{Naturality of $\sigma$} \\
&=~ (\eta \otimes \eta) \nabla e \tag{\ref{tensormonoid}} 
\end{align*}
Since we've shown commutativity, we need only show one of the unit identities: 
\begin{align*}
(1 \otimes v_e) \blacktriangledown_e &=~ (1 \otimes \nu)(1 \otimes e) (\eta \otimes \eta) \nabla e \\ 
&=~ (1 \otimes \nu)(\eta \otimes 1)(1 \otimes e)(1 \otimes \eta) \nabla e \\ 
&=~ (1 \otimes \nu)(\eta \otimes 1)(1 \otimes e)\mathsf{d} e \tag{\ref{ddef}} \\
&=~ (1 \otimes \nu)(\eta \otimes 1)\nabla e \tag{\ref{cokleisliexp2}} \\ 
&=~ \eta e \tag{\ref{tensormonoid}} \\
&=~ 1 \tag{\ref{cokleisliexp2}}
\end{align*}
Lastly, we show that $\blacktriangledown_e$ is also associative: 
\begin{align*}
(1 \otimes \blacktriangledown_e) \blacktriangledown_e &=~ (1 \otimes \eta \otimes \eta)(1 \otimes \nabla)(1 \otimes e)(\eta \otimes \eta) \nabla e \\
&=~ (1 \otimes \eta \otimes \eta)(1 \otimes \nabla)(\eta \otimes 1)(1 \otimes e)(1 \otimes \eta) \nabla e \\
&=~ (1 \otimes \eta \otimes \eta)(1 \otimes \nabla)(\eta \otimes 1)(1 \otimes e)\mathsf{d} e \tag{\ref{ddef}} \\
&=~ (1 \otimes \eta \otimes \eta)(1 \otimes \nabla)(\eta \otimes 1)\nabla e \tag{\ref{cokleisliexp2}} \\ 
&=~ (\eta \otimes \eta \otimes \eta)(1 \otimes \nabla)\nabla e \\
&=~ (\eta \otimes \eta \otimes \eta)(\nabla \otimes 1)\nabla e \tag{\ref{tensormonoid}} \\
&=~ (\eta \otimes \eta \otimes \eta)(\nabla \otimes 1) \sigma \nabla e \tag{\ref{tensormonoid}} \\
&=~ (\eta \otimes \eta \otimes \eta)(\nabla \otimes 1) \sigma (1 \otimes e)\mathsf{d} e \tag{\ref{cokleisliexp2}} \\
&=~ (\eta \otimes \eta \otimes \eta)(\nabla \otimes 1) \sigma (1 \otimes e)(1 \otimes \eta) \nabla e\tag{\ref{ddef}} \\
&=~ (\eta \otimes \eta \otimes \eta)(\nabla \otimes 1) (e \otimes 1)(\eta \otimes 1) \sigma \nabla e\tag{Naturality of $\sigma$} \\
&=~ (\eta \otimes \eta \otimes \eta)(\nabla \otimes 1) (e \otimes 1)(\eta \otimes 1) \nabla e \tag{\ref{tensormonoid}} \\
&=~ (\eta \otimes \eta \otimes 1)(\nabla \otimes 1)(e \otimes 1)(\eta \otimes \eta) \nabla e\\
&=~ (\blacktriangledown_e \otimes 1) \blacktriangledown_e 
\end{align*}
So we conclude that $(A, \blacktriangledown_e, v_e)$ is a commutative monoid. Next we show that $e$ satisfies the three identities of (\ref{!exp}). The left most diagram of (\ref{!exp}) is precisely the left diagram of (\ref{cokleisliexp2}) and $v_e = \nu e$ by construction. So it remains only to show that $\nabla e = (e \otimes e) \blacktriangledown_e$:
\begin{align*}
 (e \otimes e) \blacktriangledown_e &=~ (e \otimes e) (\eta \otimes \eta) \nabla e \\ 
 &=~ (e \otimes 1)(\eta \otimes 1) (1 \otimes e)(1 \otimes \eta) \nabla e \\
 &=~ (e \otimes 1)(\eta \otimes 1) (1 \otimes e)\mathsf{d} e \tag{\ref{ddef}} \\
 &=~ (e \otimes 1)(\eta \otimes 1) \nabla e \tag{\ref{cokleisliexp2}} \\
 &=~ (e \otimes 1)(\eta \otimes 1) \sigma \nabla e \tag{\ref{tensormonoid}} \\
 &=~ \sigma (1 \otimes e)(1 \otimes \eta) \nabla e \tag{Naturality of $\sigma$} \\
 &=~ \sigma (1 \otimes e) \mathsf{d} e \tag{\ref{ddef}} \\
 &=~ \sigma \nabla e \tag{\ref{cokleisliexp2}} \\
 &=~ \nabla e \tag{\ref{tensormonoid}}
\end{align*}
So we conclude that $(A, \blacktriangledown_e, v_e, e)$ is a $\oc$-differential exponential algebra. 
\end{proof}

Finally we will show that the constructions of Proposition \ref{propcok1} and Proposition \ref{propcok2} are inverses of each other by showing that the category of $\oc$-differential exponential algebras is isomorphic to the category of differential exponential maps in the coKleisli category. For a differential storage category $\mathbb{X}$, define its category of $\oc$-differential exponential algebras $\mathsf{\oc DEA}[\mathbb{X}]$ as the category whose objects are $\oc$-differential exponential algebras $(A, \blacktriangledown, v, e)$ and where a map ${(A, \blacktriangledown, v, e) \xrightarrow{f} (B, \blacktriangledown^\prime, v^\prime, e^\prime)}$ is a map $A \xrightarrow{f} B$ such that the following diagrams commute: 
   \begin{equation}\label{!demmap}\begin{gathered} \xymatrixcolsep{2.5pc}\xymatrix{\oc A \ar[d]_-{e} \ar[r]^-{\oc(f)} & \oc B \ar[d]^-{e^\prime} & \top \ar[r]^-{v} \ar[dr]_-{v^\prime} & A \ar[d]^-{f} & A \otimes A \ar[d]_-{\blacktriangledown} \ar[r]^-{f \otimes f} & B \otimes B \ar[d]^-{\blacktriangledown^\prime} \\
A \ar[r]_-{f} & B & & B & A \ar[r]_-{f} & B } \end{gathered}\end{equation}
and where composition and identity maps are as in $\mathbb{X}$. Note that the two right most diagrams above imply that $f$ is a monoid morphism. 

\begin{theorem}\label{isothm2} For a differential storage category $\mathbb{X}$, its category of $\oc$-differential exponential algebras $\mathsf{\oc DEA}[\mathbb{X}]$ is isomorphic to the category of differential exponential maps of the coKleisli category $\mathsf{DEM}[\mathbb{X}_\oc]$ via the inverse functors $\oc\mathsf{DEM}[\mathbb{X}] \xrightarrow{\mathsf{F}} \mathsf{DEM}[\mathbb{X}_\oc]$ and $\mathsf{DEM}[\mathbb{X}_\oc] \xrightarrow{\mathsf{F}^{-1}} \mathsf{\oc DEA}[\mathbb{X}]$ defined respectively as
\begin{align*} \mathsf{F}(A,\blacktriangledown, v, e) = (A, e) && \llbracket \mathsf{F}(f) \rrbracket = \varepsilon f \\
\mathsf{F}^{-1}(A, e)= (A, \blacktriangledown_e, v_e, e) && \mathsf{F}^{-1}(\llbracket g \rrbracket) = \eta \llbracket g \rrbracket
\end{align*}
\end{theorem}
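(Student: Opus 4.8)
The plan is to show that $\mathsf{F}$ is a functor which is bijective on objects and on hom-sets, with $\mathsf{F}^{-1}$ supplying the inverse assignments; an isomorphism of categories then follows formally. The two constructions on objects are already in hand: $\mathsf{F}(A,\blacktriangledown,v,e)=(A,e)$ is a differential exponential map in $\mathbb{X}_\oc$ by Proposition \ref{propcok1}, and $\mathsf{F}^{-1}(A,e)=(A,\blacktriangledown_e,v_e,e)$ is a $\oc$-differential exponential algebra by Proposition \ref{propcok2}. The whole argument rests on the standard correspondence between base maps and \emph{linear} coKleisli maps in a differential storage category: for any base map $A \xrightarrow{f} B$ the coKleisli map $\varepsilon f$ is linear (a short computation using $\mathsf{d}\varepsilon = \iota \otimes 1$ from \textbf{[d.3]}, the formula $\chi = \Delta(\oc(\pi_0)\otimes\oc(\pi_1))$, and the counit law $\Delta(\iota\otimes 1)=1$), while conversely every linear coKleisli map $\llbracket g \rrbracket$ satisfies $\varepsilon\,\eta\llbracket g\rrbracket = \llbracket g\rrbracket$; combined with $\eta\varepsilon = 1$ from \textbf{[cd.3]}, this makes $f \mapsto \varepsilon f$ and $\llbracket g\rrbracket \mapsto \eta\llbracket g\rrbracket$ mutually inverse bijections between $\mathbb{X}(A,B)$ and $\mathsf{LIN}[\mathbb{X}_\oc](A,B)$. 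I would invoke (or quickly reprove) this fact up front, as everything on morphisms reduces to it.

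The next step is to translate the two notions of morphism into each other. A short coKleisli computation shows that the square (\ref{demmap}) for $\varepsilon f$ — that is, $e(\varepsilon f) = (\varepsilon f)e'$ in $\mathbb{X}_\oc$ — unwinds, using coKleisli composition, naturality of $\varepsilon$, and the comonad identities $\delta\varepsilon = 1 = \delta\oc(\varepsilon)$, to the single base equation $ef = \oc(f)e'$, which is exactly the left diagram of (\ref{!demmap}). Hence $\mathsf{F}$ and $\mathsf{F}^{-1}$ carry the ``exponential'' compatibility back and forth automatically. It then remains to check that a base map $f$ satisfying $ef=\oc(f)e'$ is also a monoid morphism for the induced monoids, i.e. the two right diagrams of (\ref{!demmap}). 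This follows directly from naturality: $v_e f = \nu e f = \nu\,\oc(f)\,e' = \nu e' = v_{e'}$ by naturality of $\nu$, and $\blacktriangledown_e f = (\eta\otimes\eta)\nabla e f = (\eta\otimes\eta)\nabla\,\oc(f)\,e' = (f\otimes f)(\eta\otimes\eta)\nabla e' = (f\otimes f)\blacktriangledown_{e'}$ using naturality of $\nabla$ and of $\eta$. Thus every $\mathsf{F}^{-1}$-image $\eta\llbracket g\rrbracket$ is a morphism of $\oc$-differential exponential algebras, and conversely every such morphism yields a morphism in $\mathsf{DEM}[\mathbb{X}_\oc]$; the two morphism assignments are mutually inverse by the bijection of the first paragraph.

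Functoriality of $\mathsf{F}$ is immediate: $\mathsf{F}(1) = \varepsilon = \llbracket 1\rrbracket$, and $\mathsf{F}(f)\mathsf{F}(g) = \delta\,\oc(\varepsilon f)(\varepsilon g) = \oc(f)\,\varepsilon g = \varepsilon(fg) = \mathsf{F}(fg)$ using $\delta\oc(\varepsilon)=1$ and naturality of $\varepsilon$. For mutual inverseness on objects, $\mathsf{F}\mathsf{F}^{-1}(A,e) = (A,e)$ is automatic, while $\mathsf{F}^{-1}\mathsf{F}(A,\blacktriangledown,v,e) = (A,\blacktriangledown_e,v_e,e)$ requires recovering the monoid from $e$. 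Unlike the analogous step $\odot_e = \odot$ in the proof of Theorem \ref{isothm}, this recovery is immediate from the three diagrams of (\ref{!exp}): $v_e = \nu e = v$, and $\blacktriangledown_e = (\eta\otimes\eta)\nabla e = (\eta\otimes\eta)(e\otimes e)\blacktriangledown = (\eta e\otimes\eta e)\blacktriangledown = \blacktriangledown$ using $\eta e = 1$. Together with the hom-set bijection and the functoriality of $\mathsf{F}$, this shows $\mathsf{F}$ is an isomorphism of categories whose inverse is precisely $\mathsf{F}^{-1}$.

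The main obstacle is the foundational linear-map correspondence of the first paragraph, and in particular verifying that $\varepsilon\,\eta\llbracket g\rrbracket = \llbracket g\rrbracket$ holds exactly when $\llbracket g\rrbracket$ is linear, since this is what guarantees the morphism assignments are bijective and land in the correct categories. Once that is secured, every remaining verification is a short diagram chase using naturality of $\varepsilon,\eta,\nu,\nabla$, the comonad and comonoid identities, and the codereliction and deriving-transformation relations (\ref{etadef}) and (\ref{ddef}).
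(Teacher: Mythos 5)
Your proposal is correct, and on the decisive step it takes a genuinely different (and shorter) route than the paper. The two proofs agree in outline: objects are handled by Propositions \ref{propcok1} and \ref{propcok2} (you attribute them the right way around; the paper's own text accidentally swaps the citations), the coKleisli square $\llbracket e\,\mathsf{F}(f)\rrbracket = \llbracket \mathsf{F}(f)\,e'\rrbracket$ is unwound to the base equation $ef = \oc(f)e'$ exactly as you describe, the recovery $\blacktriangledown_e = \blacktriangledown$, $v_e = v$ from (\ref{!exp}) is the same computation, and both arguments lean on the external fact that linear coKleisli maps are precisely those of the form $\varepsilon g'$ (\cite[Proposition 4.2.5]{blute2009cartesian}), which you correctly single out as the load-bearing ingredient and which the paper likewise only cites. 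The divergence is in showing that $\mathsf{F}^{-1}(\llbracket g\rrbracket)=g'$ is a monoid morphism: the paper invokes Theorem \ref{isothm} to conclude that $\llbracket g\rrbracket$, being a map in $\mathsf{DEM}[\mathbb{X}_\oc]$, also preserves $\odot_e$ and $u_e$, and then translates those coKleisli identities into base-level ones through the extra diagrams of Proposition \ref{propcok1} ($\llbracket\odot_e\rrbracket = \chi(\varepsilon\otimes\varepsilon)\blacktriangledown$ and $\llbracket u_e\rrbracket = \chi_\top v$), which costs a fairly long computation. You instead derive the two monoid-morphism diagrams of (\ref{!demmap}) for the induced structures $\blacktriangledown_e$, $v_e$ directly from the single equation $eg' = \oc(g')e'$, using only naturality of $\eta$, $\nu$ and $\nabla$; both of your displayed chains check out. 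This is not only shorter but records something the paper does not: for the induced algebras, the two rightmost diagrams of (\ref{!demmap}) are consequences of the leftmost one, i.e.\ redundant as conditions on morphisms. What the paper's longer route buys in exchange is conceptual economy at the level of the overall development -- it reuses the dictionary already established (the isomorphism $\mathsf{DEM}[\mathbb{X}]\cong\mathsf{DES}[\mathbb{X}]$ and the correspondence between $\blacktriangledown$ and $\odot_e$), exhibiting the storage-category theorem as a reflection of the Cartesian-level one, whereas your argument is self-contained, needing only naturality and the comonad laws. Your explicit functoriality check (which the paper leaves to the reader) and the mutual-inverseness verifications are also correct.
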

\begin{proof} We first need to check that $\mathsf{F}$ and $\mathsf{F}^{-1}$ are well-defined. By Proposition \ref{propcok2}, $\mathsf{F}$ is well-defined on objects and so it remains to check that it is also well-defined on maps. First note that by \cite[Proposition 4.2.5]{blute2009cartesian}, a map in the coKleisli category $\mathbb{X}_\oc$ is linear if and only if it is of the form $\llbracket g \rrbracket = \varepsilon g^\prime$, and so every map in $\mathsf{DEM}[\mathbb{X}_\oc]$ is of this form. By definition $\llbracket \mathsf{F}(f) \rrbracket$ is linear and so it remains to show that $\llbracket e \mathsf{F}(f) \rrbracket = \llbracket \mathsf{F}(f) e^\prime \rrbracket$:
\begin{align*}
\llbracket e \mathsf{F}(f) \rrbracket &=~ \delta \oc(e) \llbracket \mathsf{F}(f) \rrbracket \\
&=~ \delta \oc(e) \varepsilon f \\
&=~ \delta \varepsilon e f \tag{Naturality of $\varepsilon$} \\
&=~ ef \tag{Comonad} \\
&=~ \oc(f) e^\prime \tag{\ref{!demmap}} \\
&=~ \delta \oc(\varepsilon) \oc(f) e^\prime \tag{Comonad} \\
&=~ \delta \oc(\llbracket \mathsf{F}(f) \rrbracket) e^\prime \\
&=~ \llbracket \mathsf{F}(f) e^\prime \rrbracket
\end{align*}
So $\mathsf{F}(f)$ is a map in $\mathsf{DEM}[\mathbb{X}_\oc]$ and therefore $\mathsf{F}$ is well-defined. On the other hand, by Proposition \ref{propcok1}, $\mathsf{F}^{-1}$ is well-defined on objects and so it again remains to check that it is also well-defined on maps. Note that since every map in $\mathsf{DEM}[\mathbb{X}_\oc]$ is of the form $\llbracket g \rrbracket = \varepsilon g^\prime$, it follows from {\bf [cd.3]} that we have that $\mathsf{F}^{-1}(\llbracket g \rrbracket)= \eta \varepsilon g^\prime = g^\prime$. Since $\llbracket g \rrbracket$ is a map in $\mathsf{DEM}[\mathbb{X}_\oc]$, by Theorem \ref{isothm}, we also have that the following equalities hold: 
\begin{align*}
 \llbracket e g \rrbracket = \llbracket g e^\prime \rrbracket && \llbracket \odot_e g \rrbracket = \llbracket (g \times g) \odot_{e^\prime} \rrbracket && \llbracket u_e g \rrbracket = \llbracket u_{e^\prime} \rrbracket
\end{align*}
Now since $\llbracket g \rrbracket = \varepsilon g^\prime$, the above identities can easily be simplified out to be: 
   \begin{equation}\label{gprime1}\begin{gathered} e g^\prime = \oc(g^\prime) e^\prime \quad \quad \quad \llbracket \odot_e \rrbracket g^\prime = \oc(g^\prime \times g^\prime) \llbracket \odot_{e^\prime} \rrbracket \quad \quad \quad \llbracket u_e \rrbracket g^\prime = \llbracket u_{e^\prime} \rrbracket
   \end{gathered}\end{equation}
Using these identities, we now show that $\mathsf{F}^{-1}(\llbracket g \rrbracket)$ satisfies (\ref{!demmap}): 
\begin{align*}
e \mathsf{F}^{-1}(\llbracket g \rrbracket) &=~ e g^\prime \\
&=~ \oc(g^\prime) e^\prime \tag{\ref{gprime1}} \\
&=~ \oc( \mathsf{F}^{-1}(\llbracket g \rrbracket) ) e^\prime \\ \\
\blacktriangledown \mathsf{F}^{-1}(\llbracket g \rrbracket) &=~ (\eta \otimes \eta)(\varepsilon \otimes \varepsilon) \blacktriangledown \mathsf{F}^{-1}(\llbracket g \rrbracket) \tag{{\bf [cd.3]}} \\
&=~ (\eta \otimes \eta)\chi^{-1} \chi (\varepsilon \otimes \varepsilon) \blacktriangledown \mathsf{F}^{-1}(\llbracket g \rrbracket) \\
&=~ (\eta \otimes \eta)\chi^{-1} \llbracket \odot_e \rrbracket \mathsf{F}^{-1}(\llbracket g \rrbracket) \tag{Proposition \ref{propcok1}} \\
&=~ (\eta \otimes \eta)\chi^{-1} \llbracket \odot_e \rrbracket g^\prime \\ 
&=~ (\eta \otimes \eta) \chi^{-1} \oc( g^\prime \times g^\prime) \llbracket \odot_{e^\prime} \rrbracket \tag{\ref{gprime1}} \\
&=~ (g^\prime \otimes g^\prime) (\eta \otimes \eta) \chi^{-1} \llbracket \odot_{e^\prime} \rrbracket \tag{Naturality of $\chi$ and $\eta$} \\
&=~ (g^\prime \otimes g^\prime) (\eta \otimes \eta) \chi^{-1} \chi (\varepsilon \otimes \varepsilon) \blacktriangledown^\prime \tag{Proposition \ref{propcok1}} \\
&=~ (g^\prime \otimes g^\prime) (\eta \otimes \eta) (\varepsilon \otimes \varepsilon) \blacktriangledown^\prime \\
&=~ (g^\prime \otimes g^\prime) \blacktriangledown^\prime \tag{{\bf [cd.3]}} \\
&=~ \left(\mathsf{F}^{-1}(\llbracket g \rrbracket) \otimes \mathsf{F}^{-1}(\llbracket g \rrbracket) \right) \blacktriangledown^\prime \\ \\
v \mathsf{F}^{-1}(\llbracket g \rrbracket) &=~ v g^\prime \\
&=~ \chi^{-1}_\top \chi_\top v g^\prime \\
&=~ \chi^{-1}_\top \llbracket u_e \rrbracket g^\prime \tag{Proposition \ref{propcok1}} \\
&=~ \chi^{-1}_\top \llbracket u_{e^\prime} \rrbracket \tag{\ref{gprime1}} \\
&=~ v^\prime \tag{Proposition \ref{propcok1}} 
\end{align*}
So $\mathsf{F}^{-1}(\llbracket g \rrbracket)$ is a map in $\mathsf{\oc DEA}[\mathbb{X}]$ and therefore $\mathsf{F}^{-1}$ is well-defined. We leave it to the reader to check for themselves that $\mathsf{F}$ and $\mathsf{F}^{-1}$ preserves both identities and composition, and are thus indeed functors. Lastly, we need to show that $\mathsf{F}$ and $\mathsf{F}^{-1}$ are inverses of each other. Clearly $\mathsf{F}\mathsf{F}^{-1}(A,e) = (A,e)$ and $\mathsf{F}\mathsf{F}^{-1}(\llbracket g \rrbracket) = \llbracket g \rrbracket$. In the other direction, we clearly have that $\mathsf{F}^{-1}\mathsf{F}(f) = f$ and so it remains to show that $(A, \blacktriangledown, v, e) = \mathsf{F}\mathsf{F}^{-1}(A, \blacktriangledown, v, e) = (A, \blacktriangledown_e, v_e, e)$, that is, we need to show that $\blacktriangledown = \blacktriangledown_e$ and $v = v_e$ -- both of which follow immediately from (\ref{!exp}): 
\begin{align*}
\blacktriangledown_e &=~ (\eta \otimes \eta) \nabla e \\
&=~ (\eta \otimes \eta) (e \otimes e) \blacktriangledown \tag{\ref{!exp}} \\
&=~ \blacktriangledown \tag{\ref{!exp}} \\ \\
v_e &=~ \nu e \\
&=~ v \tag{\ref{!exp}} 
\end{align*}
So $(A, \blacktriangledown, v, e) = (A, \blacktriangledown_e, v_e, e)$. Therefore, we conclude that $\mathsf{F}$ and $\mathsf{F}^{-1}$ are inverse functors and that $\mathsf{\oc DEA}[\mathbb{X}]$ is isomorphic to $\mathsf{DEM}[\mathbb{X}_\oc]$. 
 \end{proof}

We conclude this section by briefly studying $\oc$-differential exponential algebras in two examples of differential storage categories. 

\begin{example}\label{RELex} \normalfont Let $\mathsf{REL}$ be the category of sets and relations, that is, the category whose objects are sets $X$ and where a map $X \xrightarrow{R} Y$ is a subset $R \subseteq X \times Y$. $\mathsf{REL}$ is a differential storage category where for a set $X$:
\[\oc X = \lbrace X \xrightarrow{f} \mathbb{N} \vert~ \vert{supp(f)}\vert < \infty \rbrace\]
with $supp(f) = \lbrace x \in X \vert ~ f(x) \neq 0 \rbrace$, and where the codereliction $\eta \subseteq X \times \oc X$ is defined as follows:
\[ (x, f) \in \eta \Leftrightarrow f(y) = \begin{cases} 1 & \text{ if } y=x \\ 0 & \text{ if } x \neq y \end{cases} \]
For more details on this example, see \cite[Proposition 2.7]{blute2006differential}. It turns out that in fact $\oc$ is the free exponential modality \cite{mellies2017explicit} on $\mathsf{REL}$, that is, $\oc X$ is the cofree cocommutative comonoid over $X$ in $\mathsf{REL}$ and therefore $\oc$-coalgebras are precisely cocommutative comonoids in $\mathsf{REL}$. By self-duality of $\mathsf{REL}$, $\oc$ is also a monad such that $\oc X$ is the free commutative monoid over $X$ in $\mathsf{REL}$ and $\oc$-algebras are precisely commutative monoids in $\mathsf{REL}$. In fact, the codereliction $\eta$ is the unit of the monad structure of $\oc$. It turns out that the $\oc$-differential exponential algebras are precisely the commutative monoids in $\mathsf{REL}$ (or equivalently the $\oc$-algebras). Indeed, every $\oc$-differential exponential algebra $(A, \blacktriangledown, v, e)$ is by definition a commutative monoid $(A, \blacktriangledown, v)$ and its associated $\oc$-algebra structure is precisely $e \subseteq \oc A \times A$. Conversely, given a commutative monoid $(A, \blacktriangledown, v)$, its associated $\oc$-algebra structure $e \subseteq \oc A \times A$ satisfies by definition that $\eta e = 1$ and is also a monoid morphism, and therefore $(A, \blacktriangledown, v, e)$ is a $\oc$-differential exponential algebra. In particular, since for every $X$, $(\oc X, \nabla, \nu)$ is a commutative monoid, there is a natural transformation $\mu \subseteq \oc \oc X \times \oc X$ such that $(\oc X, \nabla, \nu, \mu)$ is a $\oc$-differential exponential algebra. Explicitly, $\mu$ is defined as follows:
\[ (F, f) \in \mu \Leftrightarrow \sum \limits_{g \in supp(F)} F(g)(x) = f(x) \]
Therefore, $\mu \subseteq \oc \oc X \times \oc X$ is a differential exponential map in the coKleisli category $\mathsf{REL}_\oc$. Also, it turns out that every $X$ comes equipped with a monoid structure in $\mathsf{REL}$ given by the dual of the copying relation, and so the induced $\oc$-algebra structure $e \subseteq \oc X \times X$ is given by: 
 \[ (f, x) \in e \Leftrightarrow f(y) = 0 \text{ for all } y \neq x \]
and so for every $X$, $e \subseteq \oc X \times X$ is a differential exponential map in the coKleisli category $\mathsf{REL}_\oc$. For more examples, monoids in $\mathsf{REL}$ are studied in \cite{jenvcova2017monoids}. 
\end{example}

\begin{example}\label{VECex} \normalfont Let $k$ be a field and let $\mathsf{VEC}_k$ be the category of $k$-vector spaces and $k$-linear maps between them. For a $k$-vector space $V$, let $\mathsf{Sym}(V)$ be the symmetric algebra over $V$ \cite[Section 8, Chapter XVI]{lang2002algebra}, that is, the free commutative $k$-algebra over $V$. In particular, if $X$ is a basis set for $V$, then $\mathsf{Sym}(V) \cong k[X]$ where $k[X]$ is the polynomial ring over the set $X$. This induces a monad $\mathsf{Sym}$ on $\mathsf{VEC}_k$ such that the $\mathsf{Sym}$-algebras are precisely the commutative $k$-algebras. Now suppose that $k$ has characteristic $0$, then $\mathsf{VEC}_k$ is a differential storage category where: 
\[\oc V = \bigoplus \limits_{v \in V} \mathsf{Sym}(V)\]
and where the codereliction $V \xrightarrow{\eta} \oc V$ is defined as the injection of $V$ into the $0 \in V$ component of $\oc V$. For full details on this example, see \cite{clift2017cofree}. Similarly to the previous example, $\oc$ is the free exponential modality on \cite{mellies2017explicit} on $\mathsf{VEC}_k$, that is, $\oc V$ is the cofree cocommutative $k$-coalgebra over $V$ \cite{murfet2015sweedler} and therefore $\oc$-coalgebras are precisely cocommutative $k$-coalgebras. It turns out that, once again, $\oc$-differential exponential algebras correspond precisely to commutative monoids in $\mathsf{VEC}_k$ which are precisely the commutative $k$-algebras or equivalently the $\mathsf{Sym}$-algebras. By definition, every $\oc$-differential exponential algebra $(A, \blacktriangledown, v, e)$ is a commutative $k$-algebra and it turns out that its $\mathsf{Sym}$-algebra structure is given by pre-composing $\oc A \xrightarrow{e} A$ with the $0 \in V$ injection map $\mathsf{Sym}(A) \xrightarrow{\mathsf{i}_0} \oc A$. Conversely, given a commutative $k$-algebra $(A, \blacktriangledown, v)$, let $\mathsf{Sym}(A) \xrightarrow{\omega} A$ be its induced $\mathsf{Sym}$-algebra structure, and define $\oc A \xrightarrow{e^\omega} A$ as the unique map which makes the following diagram commute for all injection maps $\mathsf{Sym}(A) \xrightarrow{\mathsf{i}_a} \oc A$, $a \in A$: 
\[\xymatrixcolsep{5pc}\xymatrix{\mathsf{Sym}(A) \ar[r]^-{\mathsf{i}_a} \ar[dr]_-{\omega} & \oc A \ar[d]^-{e^\omega} \\ & A }\]
then it follows that $(A, \blacktriangledown, v, e^\omega)$ is a $\oc$-differential exponential algebra. In particular for every $V$, $(\oc V, \nabla, \nu)$ is a commutative $k$-algebra. As such, there is a natural transformation $\oc \oc V \xrightarrow{\mu} \oc V$ such that $(\oc X, \nabla, \nu, \mu)$ is a $\oc$-differential exponential algebra, and thus $\mu$ is also a differential exponential map in the coKleisli category. 
 \end{example}

\section{Conclusion and Future Work}\label{consec}

As the exponential function $e^x$ (and its generalizations) is so prominent and important throughout various fields and has numerous applications, this paper opens the door to numerous possibilities and applications for differential exponential maps. In particular, as the theory of differential equations in Cartesian differential categories develops, differential exponential maps should be a key component for this theory in the same way that the exponential function is a fundamental tool in solving classical differential equations. For such applications of differential exponential maps, see \cite{cockett2019differential}. 

It is also of importance and of interest to find and study more examples of differential exponential maps in a variety of Cartesian differential categories. For example, one should consider studying differential exponential maps in cofree Cartesian differential categories \cite{cockett2011faa,lemay2018tangent} and abelian functor calculus \cite{bauer2018directional}, as well as study $\oc$-differential exponential algebras in the differential storage categories of convenient vector spaces \cite{blute2010convenient} and finiteness spaces \cite{ehrhard2017introduction}. Another possible source of examples is to construct differential exponential maps in the presence of infinite sums, which many categorical models of the differential $\lambda$-calculus \cite{ehrhard2003differential,manzonetto2012categorical} have.

There are also certain interesting potential generalizations of differential exponential maps to consider. For example, the exponential function $e^x$ can also be defined as the inverse of the natural logarithm function $ln(x)$. However the natural logarithm function is only a partial function of type $\mathbb{R} \to \mathbb{R}$, since $ln(x)$ is not defined at $x=0$. As such, one must instead work in a differential restriction category \cite{journal:diff-rest}, which allows one to consider partial functions and domains of definition. One could then generalize the natural logarithm function in a differential restriction category in such a way that differential exponential maps arise as their restriction inverse. On the other hand, one could also generalize differential exponential maps to tangent categories \cite{cockett2014differential} and differential bundles \cite{cockett2016differential}, such that this notion should be a generalization of exponential maps for manifolds and Lie groups. That differential exponential maps can be axiomatized using only the basic structure of a Cartesian differential category (that is, without referencing extra requirements such as differential rig structure), will be particularly useful when generalizing exponential functions to tangent categories.  

Regarding $\oc$-differential exponential algebras, it is interesting to point out that in both examples of differential storage categories studied in this paper, there was a natural transformation $\oc \oc A \xrightarrow{\mu} \oc A$ which endows $\oc A$ with a $\oc$-differential exponential algebra structure. As such, it would be interesting to study differential storage categories with such a $\mu$ and understand what are the consequences from a differential linear logic \cite{ehrhard2017introduction} point of view. A natural question to ask is when does the codereliction $A \xrightarrow{\eta} \oc A$ and $\mu$ provide a monad structure on $\oc$ (with one of the monad identities already being a requirement for a $\oc$-differential exponential algebra), and conversely when does a monad structure on $\oc$ induce a natural $\oc$-differential exponential algebra structure. 

Lastly, another possible direction would be to generalize the trigonometric functions and the hyperbolic functions in the same way for arbitrary Cartesian differential categories. Indeed, generalizations of the (hyperbolic) sine and cosine functions would be a pair of endomorphisms whose axioms are based on the fact that $\mathsf{D}[\sin(x)](x,y) = \cos(x)y$ and $\mathsf{D}[\cos(x)](x,y) = -\sin(x)y$ (resp. $\mathsf{D}[\sinh(x)](x,y) = \cosh(x)y$ and $\mathsf{D}[\cosh(x)](x,y) = \sinh(x)y$), as well as other algebraic properties of $\sin$ and $\cos$ (resp. $\sinh$ and $\cosh$). It should also be expected that these generalized trigonometric or hyperbolic functions would also be in bijective correspondence with special sorts of differential rigs. Furthermore, since the (split) complex exponential function is constructed using $e^x$, $\cos(x)$ and $\sin(x)$ (resp. $e^x$, $\cosh(x)$, and $\sinh(x)$), combining a differential exponential map with these generalized trigonometric (or hyperbolic) functions should again result in a differential exponential map in a similar fashion. In fact, it would be desirable to find an overarching general notion that would encompass all of these varying concepts. 

 In conclusion, there are many potential interesting paths to take for future work with differential exponential maps.  

\bibliographystyle{spmpsci}      
\bibliography{expbib}   
\end{document}